\documentclass[11pt]{article}
\usepackage{amsmath}
\usepackage{dsfont}
\usepackage{mathrsfs}
\usepackage{amsmath,amssymb}
\usepackage{amsfonts}
\usepackage{hyperref}
\usepackage{amsthm}
\usepackage{graphicx}
\usepackage{geometry}
\usepackage{subfigure}
\usepackage{caption}
\usepackage{enumerate}
\usepackage{xcolor}
\usepackage{cite}
\usepackage{epic,eepic,epsf,epsfig}
\usepackage{color}
\usepackage{floatflt}
\usepackage{float}
\usepackage{amssymb}
\usepackage{amsmath,amssymb,amsfonts}
\usepackage[utf8]{inputenc}
\usepackage{array}
\usepackage{color}

\usepackage{indentfirst}
\graphicspath{{figures/}}
\hfuzz=\maxdimen
\tolerance=10000
\hbadness=10000
\theoremstyle{definition}
\newtheorem{lemma}{Lemma}[section]

\newtheorem{proposition}[lemma]{Proposition}
\newtheorem{theorem}[lemma]{Theorem}
\newtheorem{corollary}[lemma]{Corollary}

\newtheorem{remark}{Remark}

\numberwithin{equation}{section}

\DeclareFixedFont{\Acknowledgment}{OT1}{cmr}{bx}{n}{14pt}
\textwidth 150mm \textheight 200mm \hoffset -1.2cm \voffset -0.5cm
\geometry{a4paper, left = 4cm, right = 2cm,top = 2cm, bottom = 2cm}
\linespread{1.1}
\setlength{\parindent}{2em}
\geometry{a4paper, left = 3cm, right = 1cm,top = 1cm, bottom = 1cm}
\allowdisplaybreaks

\begin{document}
\title{\bf  Improved explicit estimates for the discrete Laplace operator with hyperbolic circle patterns}
\author{Aijin Lin, Longxiang Wu}
\date{}
\maketitle

\begin{abstract}

    \par\quad
    Ge in his thesis \cite{Ge-thesis} introduced the combinatorial Calabi flows and established the long time existence and convergence of solutions to the flows in both hyperbolic and Euclidean background geometries. It is noteworthy that the existence of solutions to the combinatorial Calabi flows in hyperbolic background geometry proves to be more intricate and challenging compared to the Euclidean background geometry. The main difficulty is to establish the compactness, especially the lower boundeness along the flow equations. In this paper, we give two explicit estimates for the discrete Laplace operator based on the Glickenstein-Thomas formulation \cite{Glickenstein2017} for discrete hyperbolic conformal structures. As applications, we give new proofs of the long time existence of solutions to the combinatorial Calabi flows established by Ge-Xu \cite{Ge2016}, Ge-Hua \cite{Ge2018} and the combinatorial $p$-th Calabi flows established by Lin-Zhang \cite{Lin2019} in hyperbolic background geometry.

    \par\quad
    \newline \textbf{Keywords}: Combinatorial $p$-th Calabi flows; Discrete Laplace operator; Long time existence

\end{abstract}

\vspace{12pt}

\section{Introduction}
Over the past four decades, geometric flows have become one of the most fruitful areas of geometric analysis. Hamilton's Ricci flow \cite{Hamilton1982} defined by the evolution equation \( \frac{\partial g}{\partial t} = -2 \text{Ric}(g) \), deforms the metric based on the Ricci curvature and is critical in resolving the Poincaré conjecture, the Thurston geometrization conjecture, and the differentiable sphere theorem. Calabi \cite{Calabi1982,Calabi1985} studied the variational problem of minimizing the so-called “Calabi energy” in any fixed cohomology class of K$\ddot{\text{a}}$hler metrics, defining the Calabi flow as the gradient flow of the Calabi energy. On two dimensional surfaces, both the normalized Ricci flow and the Calabi flow exist for all time and converge to a constant curvature scalar metric (c.f. \cite{Chang2000,Chang2006,Chen2001} for further references).

Thurston \cite{Thurston1976} introduced hyperbolic circle packings (also called circle patterns) on triangulated surfaces in his program for hyperbolizing three-manifolds. The singularity at each vertex is measured by the discrete Gaussian curvature \(K_i\), calculated as the difference of \(2\pi\) and the cone angles. Thurston established that the set of all possible values for these cone angles corresponds to a convex polytope and demonstrated that a circle packing is uniquely determined by its cone angles.

One can also study geometric flows on polyhedral manifolds. As a discrete analogue of the smooth Ricci flow, the combinatorial Ricci flows were introduced by Chow and Luo \cite{BennettChow2003} for circle packings on triangulated surfaces. They proved that the combinatorial Ricci flows exist for all time and converge exponentially fast to the Thurston's circle packings, which gave an new proof of the Koebe-Andreev-Thurston theorem. Since then, combinatorial curvature flows have played significant roles in the study of low dimensional topology and practical applications. For further reading, we refer to the work of Beardon-Stephenson \cite{Beardon1990}, He-Schramm \cite{He1995}, Luo \cite{Luo2004,Luo2005}, Guo \cite{Guo2011}, Ge \cite{Ge-thesis,Ge2017a}, Ge-Xu \cite{Ge2016}, Ge-Jiang \cite{Ge2017, Ge2019, Ge2019a}, Ge-Hua \cite{Ge2018}, Feng-Ge-Hua \cite{Feng22}, Ge-Lin \cite{Ge2024}, Angle et al. \cite{Angle2016}, Glickenstein-Thomas \cite{Glickenstein2017}, Xu \cite{Xu2018}, Zhou \cite{Zhou2019}, Lin-Zhang \cite{Lin2019,Lin2021}, Feng-Lin-Zhang \cite{Feng2020}, Zhang-Zheng \cite{Zhang24}.

Especially, inspired by the work of Chow-Luo \cite{BennettChow2003}, Ge \cite{Ge-thesis,Ge2017a} introduced the combinatorial Calabi flows for Thurston's circle packings, defined as the negative gradient flow of the combinatorial Calabi energy.
Analogous to the combinatorial Ricci flows, Ge \cite{Ge-thesis,Ge2017a}, Ge-Xu \cite{Ge2016} and Ge-Hua \cite{Ge2018} established the long-time existence and exponential convergence of the combinatorial Calabi flows to the Thurston's circle packings on surfaces.
Lin and Zhang \cite{Lin2019} subsequently generalized Ge's pioneering work on the combinatorial Calabi flows from $p=2$ to any $p>1$. Xu-Wu \cite{Xu2021} introduced the fractional combinatorial Calabi flows which unify and extend the Chow-Luo's combinatorial Ricci flows, Luo's combinatorial Yamabe flows and Ge's combinatorial Calabi flows.
Unlike the combinatorial $p$-th Calabi flows, the fractional combinatorial Calabi flows reduces to the combinatorial Ricci flows for $s=0$ and to the combinatorial Calabi flows for $s=1$.
Recently, Hu et al. \cite{Hu2024} considered the combinatorial $p$-th Calabi flows in the setting of total geodesic curvatures.
The properties of the discrete Laplacian operator are of significant importance in the study of the combinatorial Calabi flows.

In this paper, we establish two important estimates for the discrete Laplacian operator in hyperbolic background geometry. The purpose of building these estimates is two-fold:
\begin{enumerate}
    \item  The first estimate is crucial for the proofs of Xu-Wu's main results \cite{Xu2021} on the combinatorial fractional Calabi flows.
    \item  The second estimate extends the Ge-Xu's key estimate \cite{Ge2016} for the discrete Laplacian operator from the zero weight $\Phi\equiv0$ to any weight $\Phi \in [0,\pi)$, which in turn provides new proofs of the long time existences of Ge's combinatorial Calabi flows \cite{Ge2018} and Lin-Zhang's the combinatorial $p$-th Calabi flows \cite{Lin2019} in hyperbolic background geometry.
\end{enumerate}

\subsection{Circle packings with obtuse intersection angles}
Let $M$ be a closed surface with a triangulation $\mathcal{T}=(V,E,F)$, where $V, E, F$ denote the sets of all vertices, edges, and faces respectively.
A \emph{weight} on a triangulation $\mathcal{T}$ is defined to be a function $\Phi: E \to [0, \pi)$. The triple $(M, \mathcal{T}, \Phi)$ will be referred to as a weighted triangulation of $M$.

Throughout this paper, we focus on the hyperbolic background geometry denoted by $\mathbb{H}^2$. A function defined on the vertex set $V$ will be identified with an $N$-dimensional column vector, where $N = |V|$ denotes the number of vertices.
Moreover, we fix an ordering for all vertices, labeled $v_1, v_2, \ldots, v_N$. Let $c_i$ denote the circle associated with each vertex $v_i \in V$ and $r_i\in (0, +\infty)$ be the radius of circle $c_i$. We call the radius function $r: V \to (0, +\infty)^{N}=\mathbb{R}_{>0}^{N}$ a \emph{circle packing metric}.

Given a fixed triangulated surface $(M, \mathcal{T}, \Phi)$,
a circle packing metric $r$ determines a piecewise linear metric on $M$. Specifically, in $\mathbb{H}^2$
the length of an edge $e_{ij} \in E$ under this metric is given by
$$l_{ij}=\cosh^{-1}(\cosh r_i\cosh r_j+\sinh r_i\sinh r_j\cos(\Phi_{ij})),$$
where $\Phi_{ij}=\Phi(e_{ij})$ denotes the weight for the edge $e_{ij} \in E$. In the case of $\Phi \in [0, \pi/2]$, Thurston \cite{Thurston1976} first showed the three-circle configuration theorem, which states that for each face $\triangle_{ijk} \in F$, the edge lengths $l_{ij}, l_{jk}, l_{ik}$ satisfy the triangle inequalities.
Consequently, every face in the set $F$ is isometric to a hyperbolic triangle.
When $\Phi \in [0,\pi)$, one can construct examples of three circles such that the edge lengths formed by their configuration do not satisfy the triangle inequalities.
However, Zhou \cite{Zhou2019} proved that Thurston's three-circle configuration theorem remains valid under the condition $\langle\star\rangle$ below.
Furthermore, the triangulated surface $(X, \mathcal{T}, \Phi)$ is constructed by coherently gluing hyperbolic triangles.
Let $I_{ij} = \cos \Phi_{ij}$ for each edge $e_{ij} \in E$.
The condition $\langle\star\rangle$ is as follows:
\begin{equation*}
    I_{i j}+I_{i k} I_{j k} \geq 0, I_{i k}+I_{i j} I_{j k} \geq 0, I_{j k}+I_{i j} I_{i k} \geq 0, \forall \triangle_{ijk} \in F.  \tag*{$\langle\star\rangle$}
\end{equation*}
Consider a triangulated surface $(X,\mathcal{T},\Phi)$ with a circle packing metric $r=(r_1, \dots, r_N)$.
Let $\theta_i^{jk}$ denote the interior angle at vertex $v_i$ in the triangle $\triangle_{ ijk} \in F$,
we have $\theta_{i}^{jk} \in (0,\pi).$
The well-known combinatorial Gauss curvature $K_i$ at vertex $v_i$ is defined as
\begin{equation}
    \label{Equation:Combinatorial_Gauss_curvature}
    K_i = 2\pi - \sum_{\triangle_{ijk} \in F} \theta_{i}^{jk},
\end{equation}
where the sum is taken over each triangle that has vertex $ v_i $ as one of its vertices.

Motivated by the construction of the Calabi energy in the smooth setting, Ge \cite{Ge-thesis,Ge2017a} introduced the combinatorial Calabi energy as follows:
\begin{equation}
    \label{Equation:Calabi_energy}
    \mathcal{C}(r) = ||K||^2 = \sum_{i=1}^N K_i^2.
\end{equation}
For convenience sake, we choose the coordinate transformation $u_i = \ln \tanh (r_i /2)$ in $\mathbb{H}^2$. The map $u = u(r)$ is a diffemorphism between $\mathbb{R}^N_{>0}$ and $\mathbb{R}^N_{<0}$.
Let \(K(u) = (K_1, \dots, K_N)^T\) denote the curvature map.
The discrete dual-Laplacian denoted by
\(\Delta\), which is a special type of the discrete Laplacian, is defined as $\Delta=-L^{T}$, where $L$ is the negative Jacobian of the curvature map $K(u)$ with respect to \(u = (u_1, \dots, u_N)\). Thus we have
\begin{equation}
    \label{Equation:Jacobian_matrix}
    \Delta=-L^{T} = -\frac{\partial(K_1, \dots, K_N)}{\partial(u_1, \dots, u_N)}.
\end{equation}
Both \(\Delta\) and $L^{T}$ operate on functions $f$ (defined on the set $V$ of vertices, hence is a column
vector) by a matrix multiplication, i.e.
\begin{equation}\label{discrete dual-Laplacian}
  \Delta f_i=(\Delta f)_i=-(L^{T} f)_i=-\sum_{j=1}^{N}\frac{\partial K_{j}}{\partial u_i}f_j, 1\leq i\leq N.
\end{equation}
The operator \(\Delta\) in (\ref{discrete dual-Laplacian}) is called the \emph{discrete Laplacian operator}. Colin de Verdi$\grave{\textrm{e}}$re \cite{Verdiere1991} first proved
\begin{equation}\label{symmetry}
  \frac{\partial K_i}{\partial u_j}=\frac{\partial K_j}{\partial u_i}.
\end{equation}
in the case where the weight $\Phi\equiv0$. Chow–Luo \cite{BennettChow2003} further generalized this result to any weight $\Phi\in[0, \pi)$. The gradient of the discrete Calabi energy \(\mathcal{C}\) (\ref{Equation:Calabi_energy}), taken with respect to the \(u\)-coordinates, is then expressed as
\begin{equation*}
    \nabla_u \mathcal{C} = (\nabla_{u_1} \mathcal{C}, \dots, \nabla_{u_N} \mathcal{C})^T = -2 \Delta K = -2(\Delta K_1, \dots, \Delta K_N).
\end{equation*}
The \emph{combinatorial Calabi flow} is defined as the negative gradient flow of the discrete Calabi energy
\begin{equation}
    \label{Equation:Combinatorial_Calabi_flow}
    u^{\prime}(t)=\Delta K=-\frac12\nabla_{u}\mathcal{C}.
\end{equation}
Denote $v_j \sim v_i$ if the vertices $v_i$ and $v_j$ are adjacent (i.e., $\exists \ e_{ij} \in E$). For any vertex $v_i$ and any edge $e_{ij} \in E$, set
\begin{equation}\label{AB-equality}
    \begin{gathered}
        B_{ij} = \frac{\partial(\theta_i^{jk}+\theta_i^{jl})}{\partial r_j}\sinh r_j=\frac{\partial\theta_i^{jk}}{\partial u_j}+\frac{\partial\theta_i^{jl}}{\partial u_j},\\
        A_i= \sinh r_i\frac\partial{\partial r_i}\Big(\sum_{\triangle_{ijk} \in F}\operatorname{Area}(\triangle_{ijk})\Big)=\frac\partial{\partial u_i}\Big(\sum_{\triangle_{ijk} \in F}\operatorname{Area}(\triangle_{ijk})\Big),
    \end{gathered}
\end{equation}
where $v_k, v_l$ are the vertices such that $\triangle_{ijk}$ and $\triangle_{ijl}$ are adjacent faces. Ge-Xu \cite{Ge2016}, Ge-Hua \cite{Ge2018} showed that \begin{eqnarray}\label{AB-inequality}
\frac{\partial K_{i}}{\partial u_j}=\left\{
\begin{aligned}
&A_i+\sum_{v_k\sim v_i}B_{ik}, & v_j=v_i, \\
&-B_{ij}, & v_j\sim v_i, \\
&0, &\text{else}.
\end{aligned}
\right.
\end{eqnarray}
Hence the discrete Laplacian operator \(\Delta\) can be rewritten as
\begin{equation}\label{discrete Laplacian operator}
  \Delta f_i= \sum_{v_j \sim v_i} B_{ij} (f_j - f_i) - A_i f_i.
\end{equation}
and the combinatorial Calabi flow (\ref{Equation:Combinatorial_Calabi_flow}) in hyperbolic background geometry can be rewritten as
\begin{equation}
\label{Equation:rewritten_calabi_equation}
    \begin{aligned}
        u_{i}^{\prime}(t) = \Delta K_i = \sum_{v_j \sim v_i} B_{ij} (K_j - K_i) - A_i K_i.
    \end{aligned}
\end{equation}
Furthermore, for any $p>1$, Lin-Zhang \cite{Lin2019} generalized the discrete Laplacian operator \(\Delta\) and defined the $p$-th discrete Laplacian operator \(\Delta_p\) which operates on functions $f: V\rightarrow \mathbb{R}$ as follows:
\begin{equation}\label{p-laplacian}
  \Delta_p f_i= \sum_{v_j \sim v_i} B_{ij} |f_j - f_i|^{p-2} (f_j - f_i) - A_i f_i.
\end{equation}
When $p=2$, the $p$-th discrete Laplacian operator \(\Delta_p\) (\ref{p-laplacian}) is exactly the discrete Laplacian operator \(\Delta\).
The combinatorial $p$-th Calabi flow in hyperbolic background geometry introduced by Lin-Zhang \cite{Lin2019} is defined as
\begin{equation}
    \label{Equation:combinatorial_p_th_calabi_flow}
    u_{i}^{\prime}(t) = \sum_{v_j \sim v_i} B_{ij} |K_j - K_i|^{p-2} (K_j - K_i) - A_i K_i,
\end{equation}
When $p=2$, the combinatorial $p$-th Calabi flow (\ref{Equation:combinatorial_p_th_calabi_flow}) is exactly the combinatorial Calabi flow (\ref{Equation:rewritten_calabi_equation}).

\subsection{Main results}
By the work of Ge \cite{Ge-thesis,Ge2017a}, Ge-Xu \cite{Ge2016} and Ge-Hua \cite{Ge2018} on the combinatorial Calabi flows, Lin-Zhang \cite{Lin2019} on the combinatorial $p$-th Calabi flows, Xu-Wu \cite{Xu2021} on the fractional combinatorial Calabi flows, the existence of solutions to combinatorial Calabi flows in hyperbolic background geometry proves to be more intricate and challenging compared to the Euclidean background geometry.
The main difficulty is to establish the compactness property, especially the lower boundeness of solutions along the flow equations.
The following estimate for the discrete Laplace operator is one of our main results, which can be used to prove the lower boundeness and further the long time existence of solutions to the fractional combinatorial Calabi flows.
\begin{theorem}\label{main result1}
    \emph{
        Given a closed triangulated surface $(M, \mathcal{T}, \Phi)$ with $\Phi : E\rightarrow [0, \pi)$ satisfying the condition $\langle\star\rangle$.
        If the circle packing metric $r$ has a uniformly positive lower bound, i.e., $r_i \geq R > 0$ ($R$ is a constant) for any vertex $v_i \in V$,
    then there exists positive constants $a_1(\Phi, R), a_2(\Phi, R), a_3(\Phi, R)$ depending on the weight $\Phi$ and $R>0$ such that $a_1 \leq A_i \leq a_2$ and $0 \leq B_{ij} \leq a_3$ for any vertex $v_i, v_j \in V$.}
\end{theorem}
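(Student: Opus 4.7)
The strategy is to reduce the statement to per-triangle bounds, apply explicit formulas derived from the hyperbolic law of cosines, and handle the non-compactness of the range of radii by asymptotic analysis. Since $\mathcal{T}$ is finite and both $A_i$ and $B_{ij}$ are finite sums over the triangles incident to $v_i$ (respectively, sharing the edge $e_{ij}$), it suffices to bound $\partial \theta_i^{jk}/\partial u_j$ and $\partial \mathrm{Area}(\triangle_{ijk})/\partial u_i$ uniformly for $r_i, r_j, r_k \geq R$. As a preparatory step, let $c := \min_{e_{ij} \in E} \cos \Phi_{ij}$, which satisfies $c > -1$ because $E$ is finite and $\Phi_{ij} \in [0,\pi)$. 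Using the identity
\[
\cosh l_{ij} = \cosh(r_i - r_j) + (1 + \cos \Phi_{ij})\sinh r_i \sinh r_j,
\]
we obtain $\cosh l_{ij} \geq 1 + (1 + c)\sinh^{2} R$, hence $l_{ij} \geq L_{0}(\Phi, R) > 0$ uniformly, together with a positive lower bound on $\sinh l_{ij}$.

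The second step is to derive the explicit formulas. Differentiating the hyperbolic law of cosines $\cos \theta_i^{jk} = (\cosh l_{ij}\cosh l_{ik} - \cosh l_{jk})/(\sinh l_{ij}\sinh l_{ik})$ and using $\partial l_{ij}/\partial u_j = \sinh r_j\,(\cosh r_i \sinh r_j + \sinh r_i \cosh r_j \cos \Phi_{ij})/\sinh l_{ij}$ expresses $\partial \theta_i^{jk}/\partial u_j$ as a rational function in the hyperbolic functions of $r_*, l_*$ and in $\sin \theta$; via the Gauss--Bonnet identity $\mathrm{Area}(\triangle_{ijk}) = \pi - \theta_i - \theta_j - \theta_k$, the same holds for $\partial \mathrm{Area}/\partial u_i$. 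The crucial asymptotic observation is that whenever any single radius $r_i \to \infty$, the quantities $\cosh l_{ij}$ and $\sinh l_{ij}$ both grow like $e^{r_i}/2$ times a factor bounded below by the preceding step, so every ratio appearing in $B_{ij}$ and $A_i$ tends to a finite limit. Together with $\sinh l_{ij} \geq \sinh L_0$, this yields uniform upper bounds $B_{ij} \leq a_3$ and $A_i \leq a_2$ depending only on $\Phi$ and $R$.

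For the sign conditions, the nonnegativity $B_{ij} \geq 0$ under $\Phi \in [0, \pi)$ and $\langle \star \rangle$ is the monotonicity result of Glickenstein--Thomas \cite{Glickenstein2017} (cf.\ Zhou \cite{Zhou2019}), which extends Chow--Luo's case $\Phi \in [0, \pi/2]$. The strict lower bound $A_i \geq a_1 > 0$ follows from inspecting the explicit formula (as in Ge--Xu \cite{Ge2016} for the case $\Phi \equiv 0$), the same asymptotic analysis as above ensuring that this positive quantity does not decay to zero as any radius tends to infinity. The main technical obstacle is precisely this non-compactness of the admissible region $\{u_i \in [\ln\tanh(R/2), 0)\}$: continuity alone does not suffice, and the heart of the argument is checking that every ratio of hyperbolic functions in the explicit formulas remains bounded and bounded away from zero uniformly in the configuration.
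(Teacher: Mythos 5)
Your overall strategy (use the explicit Glickenstein--Xu formulas for $\partial\theta_i^{jk}/\partial u_j$, observe that the exponential factors in the radii cancel between numerator and denominator, and use the lower bound $r_i\geq R$ to bound the edge lengths away from zero) is the same skeleton as the paper's proof, and your preparatory estimate $\cosh l_{ij}\ge 1+(1+c)\sinh^2 R$ is essentially the paper's estimate of $\cosh l_{ij}-1$. For the upper bounds on $B_{ij}$ and $A_i$ this plan is in the right direction, though it is a plan rather than a proof: "every ratio tends to a finite limit" along each sequence does not by itself give a uniform bound on the non-compact region $[R,\infty)^3$, and the paper verifies this by tracking explicit constants through the numerator, the factor $(C_iC_j+\varphi_{ij}S_iS_j)^2-1$, and the term $\sqrt{\Delta}$.

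The genuine gap is the uniform positive lower bound $A_i\geq a_1>0$. You assert it "follows from inspecting the explicit formula (as in Ge--Xu for the case $\Phi\equiv 0$), the same asymptotic analysis ... ensuring that this positive quantity does not decay to zero," but neither source gives you this. Ge--Xu's estimate (Proposition \ref{Proposition:Ge-Xu-upper-bound} in the paper) only gives strict positivity $A_i>0$ for $\Phi\equiv 0$, not a lower bound uniform in the metric, and when $\Phi\in[0,\pi)$ the individual summands $B_{ij}$ can actually vanish (the paper notes this happens e.g.\ when $\Phi_{ij}=0$, $\Phi_{ik}+\Phi_{jk}=\pi$, $\Phi_{il}+\Phi_{jl}=\pi$). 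So one cannot argue term-by-term. The paper closes this gap with a dedicated algebraic inequality (Proposition \ref{proof_middle_prop}) that shows, per triangle, that the two-edge combination
\[
\frac{\partial\theta_j^{ik}}{\partial u_i}(\cosh l_{ij}-1)+\frac{\partial\theta_k^{ij}}{\partial u_i}(\cosh l_{ik}-1)
\]
is bounded below by a positive constant depending on $\bar\varphi$ and $a(R)$, and then sums over the triangles incident to $v_i$. Without an analogue of this step, your proof does not establish $a_1>0$; it would only show $A_i>0$ for each fixed metric, which is strictly weaker than what the theorem asserts.
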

Theorem \ref{main result1} is first proved by Xu-Wu \cite{Xu2021} and plays a key role in their main results on the fractional combinatorial Calabi flows. We will give a new proof of Theorem \ref{main result1}, which is more explicit and direct than Xu-Wu's proof.

Furthermore, we show that the positive upper bound constants $a_2(\Phi, R), a_3(\Phi, R)$ in Theorem \ref{main result1} do not depend on the constant $R>0$ in fact. Therefore, we can improve Theorem \ref{main result1} after removing the assumption $r_i \geq R > 0, \forall v_i \in V$ and state our second main result as follows.

\begin{theorem}\label{main result2}
    \emph{Given a closed triangulated surface $(M, \mathcal{T}, \Phi)$ with  $\Phi : E\rightarrow [0, \pi)$ satisfying the condition $\langle\star\rangle$ .
    Then for any the circle packing metric $r$, there exists positive constants $C_1(\Phi), C_2(\Phi)$ depending only on the weight $\Phi$
     such that $0< A_i \leq C_1$ and
     $0<B_{ij} \leq C_2$ for any vertex $v_i, v_j \in V$.   }
\end{theorem}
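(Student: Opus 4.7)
The plan is to derive Theorem \ref{main result2} as a strengthening of Theorem \ref{main result1}, by re-examining the explicit formulas produced in the proof of Theorem \ref{main result1} and checking that the upper-bound constants $a_2(\Phi,R), a_3(\Phi,R)$ actually have no dependence on the radius lower bound $R$. The statement separates naturally into two parts: the upper bounds $A_i\leq C_1$, $B_{ij}\leq C_2$, which are the nontrivial uniform estimates, and the strict positivities $A_i>0$, $B_{ij}>0$, which are pointwise statements that follow from standard monotonicity properties of hyperbolic triangles.

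For the upper bounds, the strategy is to write $A_i$ and $B_{ij}$ as explicit rational expressions in $\sinh r_\ast, \cosh r_\ast$ and the intersection data $I_{\ast\ast}=\cos\Phi_{\ast\ast}$. Using $\partial/\partial u_j = \sinh r_j\cdot\partial/\partial r_j$ together with the hyperbolic cosine rule and implicit differentiation applied to the identity $l_{ij}=\cosh^{-1}(\cosh r_i\cosh r_j+\sinh r_i\sinh r_j I_{ij})$, one obtains formulas of the form
\begin{equation*}
\frac{\partial\theta_i^{jk}}{\partial u_j}=\frac{P(\cosh r_\ast,\sinh r_\ast,I_{\ast\ast})\,\sinh r_j}{\sinh l_{ij}\,\sinh l_{ik}\,\sin\theta_i^{jk}},
\end{equation*}
where $P$ is an explicit polynomial. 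Then I would use the Gauss--Bonnet relation $\operatorname{Area}(\triangle_{ijk})=\pi-\theta_i^{jk}-\theta_j^{ik}-\theta_k^{ij}$ to reduce $A_i$ to a sum of such angle derivatives. The point is that $\sinh l_{ij}$ can be expanded by the cosine rule into a product containing $\sinh r_i\sinh r_j$, so the $\sinh r_j$ in the numerator is absorbed, and the resulting expression can be bounded by a universal constant depending only on $\Phi$ (via condition $\langle\star\rangle$), uniformly in $r\in\mathbb{R}_{>0}^N$. This is precisely the step where Xu--Wu needed $r_i\geq R$; inspection of the cancellations should show that this hypothesis is in fact superfluous for the upper estimate.

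The positivities are easier: $A_i>0$ follows because the total incident area is a strictly increasing function of $r_i$ (and $u_i$ is a smooth monotone reparametrization of $r_i$), while $B_{ij}>0$ follows from the strict monotonicity $\partial\theta_i^{jk}/\partial u_j>0$ guaranteed by the non-degenerate three-circle configuration under $\langle\star\rangle$, a fact already implicit in Chow--Luo \cite{BennettChow2003} and Zhou \cite{Zhou2019}. The main obstacle I anticipate is the regime $r_i\to 0^+$, i.e.\ $u_i\to-\infty$, where several factors $\sinh r_i$ vanish simultaneously; one has to track carefully that the denominator $\sinh l_{ij}\sinh l_{ik}\sin\theta_i^{jk}$ degenerates at exactly the right rate to match the numerator, leaving a quantity whose supremum depends only on $\Phi$. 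The opposite regime $r_i\to\infty$ corresponds to $u_i\to 0^-$ and should be handled by the same uniform formula, completing the removal of the $R$-dependence.
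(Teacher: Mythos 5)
Your plan correctly identifies the right structure — recycle the explicit Glickenstein--Thomas formula for $\partial\theta_j^{ik}/\partial u_i$, use Corollary \ref{Corollary:Glickenstein-corollary} to reduce $A_i$ to angle derivatives weighted by $\cosh l_{ij}-1$, and then argue the upper bound is uniform in $r$. You also correctly single out the degenerate regime $r_i\to 0^+$ as the crux. But the proposal stops exactly at the step that constitutes the theorem: you write that ``inspection of the cancellations should show'' the $R$-hypothesis is superfluous and that ``one has to track carefully that the denominator degenerates at exactly the right rate,'' without actually doing so. This is not a filled-in step; it is precisely the content of Section 4. The cancellations are \emph{not} automatic: the numerator
\[
C_k S_i^2 S_j^2(1-\varphi_{ij}^2)+C_iS_iS_j^2S_k(\varphi_{ik}+\varphi_{ij}\varphi_{jk})+C_jS_i^2S_jS_k(\varphi_{jk}+\varphi_{ij}\varphi_{ik})
\]
and the denominator $[(C_iC_j+\varphi_{ij}S_iS_j)^2-1]\sqrt{\Delta}$ each vanish to various orders as different subsets of $\{r_i,r_j,r_k\}$ tend to zero, and whether the quotient stays bounded depends on which of the weight coefficients $a_n=1-\varphi_{\ast\ast}^2$, $b_n=\varphi_{\ast\ast}+\varphi_{\ast\ast}\varphi_{\ast\ast}$ vanish. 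The paper's proof handles this via the substitution $x=(e^{2r_i}-1)/2$, rewriting everything as polynomials $f,g_1,g_2,q$, partitioning $(0,\infty)^3$ into $\Omega_1$ (where Theorem \ref{main result1} applies) and $\Omega_2$, and then carrying out a multi-page casewise asymptotic expansion (one, two, or three variables $\to 0$, each subdivided by whether certain $a_n,b_n$ vanish). There is no single uniform cancellation; several subcases require dividing by fractional powers of $\rho$ or by $z^{1/2}$ to expose the correct leading order, and the boundedness is only established path-by-path near the singular set.

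A second, smaller issue: you assert $B_{ij}>0$ follows from ``strict monotonicity $\partial\theta_i^{jk}/\partial u_j>0$,'' but under $\langle\star\rangle$ this derivative is only $\geq 0$, with equality possible (e.g.\ $\Phi_{ij}=0$, $\Phi_{ik}+\Phi_{jk}=\pi$). The paper itself only derives $B_{ij}\geq 0$ in the proof; the strict inequality $A_i>0$ is obtained by a different argument (showing the pair $\partial\theta_j^{ik}/\partial u_i+\partial\theta_k^{ij}/\partial u_i$ cannot simultaneously vanish under $\Phi\in[0,\pi)$). Your cited monotonicity from Chow--Luo/Zhou does not give strict positivity of each $B_{ij}$ in the obtuse regime.

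In short: the roadmap is sound and matches the paper's architecture, but the hardest and longest part of the argument — verifying boundedness across all degenerate limits, which is a genuine multi-case analysis and not a clean algebraic cancellation — is left as an expectation rather than carried out, so the proposal as written has a substantive gap.
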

Theorem \ref{main result2} provides a uniform upper boundness for the discrete Laplace operator, thereby generalizing an important estimate proved by Ge-Xu \cite{Ge2016} in the specific case when \(\Phi \equiv 0\).

To establish a uniform upper boundness for solutions to the combinatorial Calabi flows, Ge-Hua \cite{Ge2018} analyzed the geometric evolution of terms in the governing formula under metric variation. They proved that for $\Phi \in [0,\pi/2]$, there exists a constant $C > 0$ such that if the metric components $c_i \geq C$ for all $v_i \in V$, then $\frac\partial{\partial r_i}(2\mathrm{Area}(\triangle_{ijk})+\theta_i^{jk})\geq0$. This inequality was subsequently extended by Zhang-Zheng \cite{Zhang24} to the broader range $\Phi \in [0,\pi)$. By (\ref{AB-equality}), this inequality implies a relationship between $A_i$ and $B_{ij}$. We offer an alternative proof for the quantitative relationship between $A_i$ and $B_{ij}$ based on the Glickenstein-Thomas formulation \cite{Glickenstein2017} for discrete hyperbolic conformal structures, which is formally stated in Lemma \ref{Lemma:Glickenstein-lemma} (see Section 2) whose analytical verification is provided in the Appendix.

As applications, we can give new proofs of the long time existences of solutions to the combinatorial Calabi flows established by Ge-Xu \cite{Ge2016}, Ge-Hua \cite{Ge2018} and the combinatorial $p$-th Calabi flows established by Lin-Zhang \cite{Lin2019} in hyperbolic background geometry.

\begin{theorem}\label{long time existence}
    For any initial circle packing metric $r(0) \in \mathbb{R}_{>0}^{N}$, the solution to both the combinatorial Calabi flow (\ref{Equation:rewritten_calabi_equation}) and the combinatorial $p$-th Calabi flow (\ref{Equation:combinatorial_p_th_calabi_flow}) in $\mathbb{H}^{2}$ exists for all time $t \in [0,+\infty)$.
\end{theorem}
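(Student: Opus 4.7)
The plan is to use Theorem \ref{main result2} as the central ingredient to produce a uniform Lipschitz bound for $u(t)$ in the $u$-coordinates, which then rules out finite-time blow-up of both flows. Since both (\ref{Equation:rewritten_calabi_equation}) and (\ref{Equation:combinatorial_p_th_calabi_flow}) are systems of ODEs with smooth right-hand side on the open set $\mathbb{R}^N_{<0}$, standard ODE theory gives a unique maximal smooth solution $u\colon[0,T_{\max})\to\mathbb{R}^N_{<0}$; I will argue $T_{\max}=+\infty$ by contradiction.

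The first observation is a purely combinatorial a priori bound on the curvatures: since $\theta_i^{jk}\in(0,\pi)$ for each face, the identity (\ref{Equation:Combinatorial_Gauss_curvature}) yields $|K_i(t)|\leq K_0(\mathcal{T})$ uniformly in $t$. Combining this with the metric-independent upper bounds $A_i\leq C_1(\Phi)$ and $B_{ij}\leq C_2(\Phi)$ from Theorem \ref{main result2}, and substituting into the right-hand sides of (\ref{Equation:rewritten_calabi_equation}) and (\ref{Equation:combinatorial_p_th_calabi_flow}), I obtain a uniform pointwise estimate
\begin{equation*}
    |u_i'(t)|\leq d_{\max}\,C_2\,(2K_0)^{p-1}+C_1 K_0 =: \tilde C
\end{equation*}
valid for every $i$ and every $t\in[0,T_{\max})$, where $d_{\max}$ is the maximum vertex degree. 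Integrating gives $|u_i(t)-u_i(0)|\leq\tilde C\,T_{\max}$.

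Now suppose $T_{\max}<\infty$. By maximality, $u(t)$ must leave every compact subset of $\mathbb{R}^N_{<0}$ as $t\to T_{\max}^-$, which means that either some $u_i(t)\to-\infty$ (equivalently $r_i\to 0^+$) or some $u_i(t)\to 0^-$ (equivalently $r_i\to+\infty$). The Lipschitz bound above excludes the first possibility immediately. The second is ruled out by examining the sign of the flow near the boundary: as $r_i\to\infty$ all angles at $v_i$ tend to $0$ and $K_i\to 2\pi>0$ while the neighboring $K_j$ stay bounded, so the drag term $-A_i K_i$ forces $u_i'(t)<0$ for $r_i$ sufficiently large, repelling $u_i$ away from $u_i=0$. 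Together with the Lipschitz bound, this keeps $u(t)$ in a compact subset of $\mathbb{R}^N_{<0}$ on $[0,T_{\max})$, contradicting $T_{\max}<\infty$.

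The main obstacle I anticipate is this last step. Although $A_i>0$ is part of Theorem \ref{main result2}, the theorem supplies no uniform positive lower bound for $A_i$ as $r_i\to\infty$, so converting the heuristic "$u_i'<0$ near the boundary" into quantitative repulsion may require a finer asymptotic analysis of $A_i$ and $B_{ij}$ at $r_i=\infty$, or alternatively the monotonicity of the respective Calabi energy along each flow, which together with Theorem \ref{main result2} yields an $L^2$-in-time bound on $u'$ that pins $u$ away from the boundary component $\{u_i=0\}$.
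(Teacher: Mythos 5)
Your first step matches the paper's: the uniform upper bounds on $A_i$ and $B_{ij}$ from Theorem \ref{main result2}, together with the a priori combinatorial bound $(2-d)\pi<K_i<2\pi$, give $|u_i'(t)|\le \tilde C$, which yields a finite-time lower bound on $r_i(t)$ and rules out $r_i\to 0^+$ on $[0,T_{\max})$. However, the gap you flag in the upper-bound step is real, and neither of the two fixes you propose closes it. The paper's resolution uses two ingredients absent from your sketch. First, Lemma \ref{lemma} (Zhang--Zheng): for any $\epsilon>0$ there exists $l$ such that $r_i>l$ forces each inner angle $\theta_i^{jk}<\epsilon$; taking $\epsilon=\pi/(2d_i)$ gives $K_i>\pi$ once $r_i>l$, and since always $K_j<2\pi$, one gets $K_j-K_i<\pi$ and hence $|K_j-K_i|^{p-2}(K_j-K_i)\le\pi^{p-1}$. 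Second, and crucially, the identity $A_i=\sum_{v_j\sim v_i}B_{ij}(\cosh l_{ij}-1)$ from Corollary \ref{Corollary:Glickenstein-corollary}. Substituting it into $u_i'\le\pi^{p-1}\sum_{v_j\sim v_i}B_{ij}-\pi A_i$ gives
\begin{equation*}
u_i'\le\pi^{p-1}\sum_{v_j\sim v_i}B_{ij}\left(1-\frac{\cosh l_{ij}-1}{\pi^{p-2}}\right),
\end{equation*}
and each coefficient $1-\frac{\cosh l_{ij}-1}{\pi^{p-2}}$ becomes nonpositive once $r_i$ exceeds some $R$, because $\cosh l_{ij}=\cosh r_i\cosh r_j+\cos\Phi_{ij}\sinh r_i\sinh r_j\ge(1-|\cos\Phi_{ij}|)\cosh r_i\to\infty$ as $r_i\to\infty$, uniformly in $r_j$ (using $\Phi_{ij}<\pi$). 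Since $B_{ij}\ge 0$, this forces $u_i'\le 0$ whenever $r_i>\max\{R,l\}$, so $r_i(t)\le\max\{R,l,r_i(0)\}$.

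This term-by-term cancellation sidesteps exactly the worry you raised: even if $A_i$ and $\sum_{v_j\sim v_i}B_{ij}$ both become small as $r_i\to\infty$, the factor $\cosh l_{ij}-1$ hidden inside $A_i$ diverges, so the drag dominates inside the sum before either side is estimated globally. Neither a positive lower bound on $A_i$ alone (Theorem \ref{main result1} requires a uniform lower bound on all $r_j$, which cannot be invoked independently of the bound being proved) nor the $L^2$-in-time bound on $u'$ from energy monotonicity supplies this structural comparison between the forcing and drag terms. Without Corollary \ref{Corollary:Glickenstein-corollary} and Lemma \ref{lemma}, the upper-bound half of your argument remains a heuristic.
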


The paper is organized as follows. In Section 2, we give some preliminaries on the discrete laplace operator. In Section 3, we prove Theorem \ref{main result1} based the quantitative relationship between $A_i$ and $B_{ij}$. In Section 4, we prove Theorem \ref{main result2} by using the method of proof by cases. In Section 5, we prove Theorem \ref{long time existence}.

\noindent{\bf Acknowledgments.}
The first author would like to thank professor Gang Tian for constant encouragement and support. Both authors would like to thank professor Huabin Ge for many useful conversations. The first author is supported by NSFC (No. 12171480), Natural Science Foundation of Hunan Province (No. 2022JJ10059) and Scientific Research Program of NUDT (No. JS2023-01).

\section{Preliminaries}

In this section, we give necessary preliminaries. Recall the discrete Laplacian operator \(\Delta\) acts on a function \( f \) via matrix multiplication:
\begin{equation*}
    \Delta f_i = -\sum_{j=1}^{N} \frac{\partial K_j}{\partial u_i} f_j = \sum_{v_j \sim v_i} B_{ij}(f_j - f_i) - A_i f_i.
\end{equation*}
It is worth noting that in Lemma A1 of Chow-Luo \cite{BennettChow2003}, the weight satisfies $\Phi \in [0,\pi)$; hence, under the obtuse weight condition (i.e., $\Phi \in [0,\pi)$) considered in this paper, the relationship between $B_{ij}$ and $A_i$ with respect to $L$ remains valid. Glickenstein-Thomas \cite{Glickenstein2017} obtained a more fundamental relationship between $B_{ij}$ and $A_i$ through the analysis of the discrete conformal structure using geometric methods. Here, we provide an analytic proof of their conclusion.

\begin{lemma}[\cite{Glickenstein2017}, Proposition 9]
    \label{Lemma:Glickenstein-lemma}
    In $ \mathbb{H}^{2} $, given a triangulated surface $ (M, \mathcal{T}, \Phi) $ with weight $ \Phi \in[0, \pi) $, for any triangle $ \triangle_{i j k} $, we have
    \begin{equation}
        \label{Equation:Glickenstein-lemma}
        \begin{aligned}
            \frac{\partial \theta_{i}^{j k}}{\partial u_{i}} & =-\cosh l_{i j} \frac{\partial \theta_{i}^{j k}}{\partial u_{j}}-\cosh l_{i k} \frac{\partial \theta_{i}^{j k}}{\partial u_{k}} \\
            & =-\cosh l_{i j} \frac{\partial \theta_{j}^{i k}}{\partial u_{i}}-\cosh l_{i k} \frac{\partial \theta_{k}^{i j}}{\partial u_{i}}
            \end{aligned}
    \end{equation}
\end{lemma}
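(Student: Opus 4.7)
The plan is to reduce (\ref{Equation:Glickenstein-lemma}) to explicit algebraic sub-identities via the chain rule. The second equality follows from the within-triangle symmetry $\partial \theta_i^{jk}/\partial u_j = \partial \theta_j^{ik}/\partial u_i$ (and the analogue with $i \leftrightarrow k$), which is the single-triangle contribution to (\ref{symmetry}) and is established in \cite{BennettChow2003, Verdiere1991}. For the first equality, I exploit that $\theta_i^{jk}$ depends on the three radii only through the edge lengths $l_{ij}, l_{ik}, l_{jk}$ and that $l_{jk}$ is independent of $u_i$. Chain rule then rewrites the identity as
\[
\frac{\partial \theta_i^{jk}}{\partial l_{ij}}\,Q_{ij} + \frac{\partial \theta_i^{jk}}{\partial l_{ik}}\,Q_{ik} + \frac{\partial \theta_i^{jk}}{\partial l_{jk}}\,Q_{jk} = 0,
\]
where
\[
Q_{ij} := \frac{\partial l_{ij}}{\partial u_i} + \cosh l_{ij}\,\frac{\partial l_{ij}}{\partial u_j},\ \ Q_{ik} := \frac{\partial l_{ik}}{\partial u_i} + \cosh l_{ik}\,\frac{\partial l_{ik}}{\partial u_k},\ \ Q_{jk} := \cosh l_{ij}\,\frac{\partial l_{jk}}{\partial u_j} + \cosh l_{ik}\,\frac{\partial l_{jk}}{\partial u_k}.
\]

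The first main step is the edge sub-identity
\[
Q_{ij} = \cosh r_j\,\sinh l_{ij},\qquad Q_{ik} = \cosh r_k\,\sinh l_{ik}.
\]
This is purely algebraic in $(r_i, r_j, I_{ij})$: starting from $\cosh l_{ij} = \cosh r_i \cosh r_j + \sinh r_i \sinh r_j I_{ij}$ together with $\partial r_a/\partial u_a = \sinh r_a$, one computes $\partial l_{ij}/\partial u_a$ explicitly, then expands $Q_{ij}\sinh l_{ij}$ and $\cosh r_j(\cosh^2 l_{ij}-1)$, after which the identity collapses to $\cosh^2 r_a \cosh^2 r_b - 1 = \sinh^2 r_a + \sinh^2 r_b + \sinh^2 r_a \sinh^2 r_b$, which is immediate from $\cosh^2 - \sinh^2 = 1$. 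Solving the $2\times 2$ linear system formed by $Q_{ij}$ and its $i \leftrightarrow j$ analogue then yields the compact formula
\[
\frac{\partial l_{ab}}{\partial u_a} = \frac{\cosh l_{ab} \cosh r_a - \cosh r_b}{\sinh l_{ab}},
\]
which, substituted into the definition of $Q_{jk}$, produces
\[
Q_{jk}\,\sinh l_{jk} = \cosh r_j\bigl(\cosh l_{ij} \cosh l_{jk} - \cosh l_{ik}\bigr) + \cosh r_k\bigl(\cosh l_{ik} \cosh l_{jk} - \cosh l_{ij}\bigr).
\]

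Finally, the hyperbolic law of cosines gives $\cos \theta_j^{ik} = (\cosh l_{ij} \cosh l_{jk} - \cosh l_{ik})/(\sinh l_{ij} \sinh l_{jk})$ (and its $j \leftrightarrow k$ analogue), so the preceding display becomes $Q_{jk} = \cos \theta_j^{ik} \cosh r_j \sinh l_{ij} + \cos \theta_k^{ij} \cosh r_k \sinh l_{ik}$. Differentiating the law of cosines for $\theta_i^{jk}$ supplies $\partial \theta_i^{jk}/\partial l_{ab}$ in closed form with a common denominator $\sin \theta_i^{jk}\sinh l_{ij}\sinh l_{ik}$; plugging these and the values of $Q_{ij}, Q_{ik}, Q_{jk}$ into the chain-rule identity and clearing the common factor leaves $-\cos \theta_j^{ik}\cosh r_j \sinh l_{ij} - \cos\theta_k^{ij}\cosh r_k\sinh l_{ik} + Q_{jk}=0$, which is exactly the formula for $Q_{jk}$ just derived. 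The principal obstacle is the Step~2 sub-identity $Q_{ij} = \cosh r_j \sinh l_{ij}$, since this is the one place where the weighted edge-length formula with $\Phi \in [0,\pi)$ enters nontrivially; once it is in hand, the remainder is a short accounting exercise with the law of cosines.
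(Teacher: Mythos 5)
Your proposal is correct, and while it uses the same raw ingredients as the paper's Appendix proof (chain rule through the edge lengths, derivatives of the hyperbolic law of cosines, and derivatives of the weighted edge-length formula), it organizes the computation differently. The paper expands each of $A_{ijk}\frac{\partial \theta_i}{\partial u_i}$, $A_{ijk}\frac{\partial \theta_i}{\partial u_j}\cosh l_{ij}$ and $A_{ijk}\frac{\partial \theta_i}{\partial u_k}\cosh l_{ik}$ in full as rational expressions in $\cosh l_{ab}$ and $\cosh r_a$ (its equations (\ref{Equation:Glickenstein-lemma-proof_3})--(\ref{Equation:Glickenstein-lemma-proof_7})) and then verifies that the sum cancels term by term. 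You instead regroup the chain-rule sum by the factors $\frac{\partial \theta_i^{jk}}{\partial l_{ab}}$, which isolates the three edge quantities $Q_{ij},Q_{ik},Q_{jk}$; the sub-identity $Q_{ij}=\cosh r_j\,\sinh l_{ij}$ (equivalently $\frac{\partial l_{ij}}{\partial u_i}=\frac{\cosh l_{ij}\cosh r_i-\cosh r_j}{\sinh l_{ij}}$, which also appears implicitly in the paper's computation since $S_i^2C_j+\cos\Phi_{ij}C_iS_iS_j=C_iC_{ij}-C_j$) is where the conformal structure enters, and after that the vanishing of the sum is exactly the law-of-cosines expression for $Q_{jk}$. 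I checked the sub-identities: $Q_{ij}=\frac{(C_iC_{ij}-C_j)+C_{ij}(C_jC_{ij}-C_i)}{S_{ij}}=\frac{C_j(C_{ij}^2-1)}{S_{ij}}=C_jS_{ij}$ and $Q_{jk}S_{jk}=C_j(C_{ij}C_{jk}-C_{ik})+C_k(C_{ik}C_{jk}-C_{ij})$ are both right, and with $\frac{\partial\theta_i}{\partial l_{ij}}=-\frac{S_{jk}\cos\theta_j}{A_{ijk}}$, $\frac{\partial\theta_i}{\partial l_{ik}}=-\frac{S_{jk}\cos\theta_k}{A_{ijk}}$, $\frac{\partial\theta_i}{\partial l_{jk}}=\frac{S_{jk}}{A_{ijk}}$ the whole sum collapses to $\frac{S_{jk}}{A_{ijk}}\bigl(-\cos\theta_j\,C_jS_{ij}-\cos\theta_k\,C_kS_{ik}+Q_{jk}\bigr)=0$. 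Your route buys a shorter and more structured verification (the messy algebra is quarantined in one per-edge identity, and the rest is pure hyperbolic trigonometry), at the cost of the slightly roundabout detour of recovering $\frac{\partial l_{ab}}{\partial u_a}$ from the $2\times 2$ system rather than reading it off directly; the paper's brute-force expansion has the minor advantage of simultaneously producing the explicit formulas (\ref{Equation:Glickenstein-lemma-proof_4}) and (\ref{Equation:Glickenstein-lemma-proof_5}) that are reused in Remark \ref{Remark:Glickenstein-lemma}. The treatment of the second equality via the Chow--Luo symmetry is identical in both.
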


\begin{remark}
    \label{Remark:Glickenstein-lemma}
    In \cite{Xu2018}, Xu obtained the inherent symmetry of the dual structure of circle packings to the equalities (\ref{Equation:Glickenstein-lemma-proof_4}) and (\ref{Equation:Glickenstein-lemma-proof_5}) (see Appendix) ,
    thereby obtaining the following expression
    \begin{equation}
        \label{Equation:Glickenstein-remark_1}
        \frac{\partial \theta_{i}^{jk}}{\partial u_j} = \frac{1}{A_{ijk} \sinh^2 l_{ij}} \left[ C_k S_i^2 S_j^2 \left(1 - \cos^2 \Phi_{ij}\right) + C_i S_i S_j^2 S_k \gamma_{jik} + C_j S_i^2 S_j S_k \gamma_{ijk} \right].
    \end{equation}
    Here, $\gamma_{ijk} = \cos \Phi_{jk} + \cos \Phi_{ij} \cos \Phi_{ik}.$
    Based on the symmetry present in the above expression, we deduce that
    \begin{equation}
        \label{Equation:Glickenstein-remark_2}
        \frac{\partial \theta_{i}^{jk}}{\partial u_j} = \frac{\partial \theta_{j}^{ik}}{\partial u_i}.
    \end{equation}
    This is precisely the symmetry got in the proof of Lemma \ref{Lemma:Glickenstein-lemma}.
\end{remark}

\begin{corollary}
    \label{Corollary:Glickenstein-corollary}
    Based on the area formula for a hyperbolic triangle
    $\operatorname{Area}\left(\triangle_{ijk}\right)=\pi-\theta_{i}^{jk}-\theta_{j}^{ik}-\theta_{k}^{ij},$
    we have
    \begin{equation}\label{deri-area}
            \frac{\partial}{\partial u_{i}} \operatorname{Area}\left(\triangle_{ijk}\right)
            =\frac{\partial \theta_{j}^{ik}}{\partial u_{i}}\left(\cosh l_{ij}-1\right)+\frac{\partial \theta_{k}^{ij}}{\partial u_{i}}\left(\cosh l_{ik}-1\right),
    \end{equation}
 and     \begin{equation}\label{A-equality}
            A_{i}=\sum_{v_j \sim v_i}B_{ij}\left(\cosh l_{ij}-1\right).
    \end{equation}
    \end{corollary}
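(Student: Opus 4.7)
The plan is to prove both identities by direct symbolic manipulation; Lemma \ref{Lemma:Glickenstein-lemma} supplies all the non-trivial input, and the symmetry relation (\ref{symmetry}) handles the combinatorial reassembly. There is no deep obstacle here; the argument is essentially formal, and the only point requiring care is the re-indexing of a triangle-wise sum as an edge-wise sum.

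For the first identity (\ref{deri-area}), I would simply differentiate the area formula $\operatorname{Area}(\triangle_{ijk}) = \pi - \theta_i^{jk} - \theta_j^{ik} - \theta_k^{ij}$ with respect to $u_i$, producing
\[
\frac{\partial}{\partial u_i}\operatorname{Area}(\triangle_{ijk}) = -\frac{\partial \theta_i^{jk}}{\partial u_i} - \frac{\partial \theta_j^{ik}}{\partial u_i} - \frac{\partial \theta_k^{ij}}{\partial u_i}.
\]
Then I substitute the first line of (\ref{Equation:Glickenstein-lemma}), namely $\partial_{u_i}\theta_i^{jk} = -\cosh l_{ij}\,\partial_{u_i}\theta_j^{ik} - \cosh l_{ik}\,\partial_{u_i}\theta_k^{ij}$, for the first term on the right. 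After collecting coefficients of $\partial_{u_i}\theta_j^{ik}$ and $\partial_{u_i}\theta_k^{ij}$, the three terms collapse to exactly $\partial_{u_i}\theta_j^{ik}(\cosh l_{ij}-1) + \partial_{u_i}\theta_k^{ij}(\cosh l_{ik}-1)$, which is (\ref{deri-area}).

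For the second identity (\ref{A-equality}), I would sum (\ref{deri-area}) over all triangles $\triangle_{ijk}\in F$ containing $v_i$, using the definition $A_i = \partial_{u_i}\sum_{\triangle_{ijk}\in F}\operatorname{Area}(\triangle_{ijk})$ from (\ref{AB-equality}). Each edge $e_{ij}$ incident to $v_i$ is shared by exactly two faces, say $\triangle_{ijk}$ and $\triangle_{ijl}$; hence in the triangle-wise sum the factor $\cosh l_{ij}-1$ appears twice, once paired with $\partial_{u_i}\theta_j^{ik}$ from $\triangle_{ijk}$ and once with $\partial_{u_i}\theta_j^{il}$ from $\triangle_{ijl}$. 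Applying the symmetry noted in Remark \ref{Remark:Glickenstein-lemma} (equivalently (\ref{symmetry})), $\partial_{u_i}\theta_j^{ik} = \partial_{u_j}\theta_i^{jk}$ and $\partial_{u_i}\theta_j^{il} = \partial_{u_j}\theta_i^{jl}$, and then invoking the definition $B_{ij} = \partial_{u_j}\theta_i^{jk} + \partial_{u_j}\theta_i^{jl}$ from (\ref{AB-equality}), the paired contributions at edge $e_{ij}$ reduce cleanly to $(\cosh l_{ij}-1)\,B_{ij}$. Summing over neighbors $v_j\sim v_i$ yields $A_i = \sum_{v_j\sim v_i} B_{ij}(\cosh l_{ij}-1)$.

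The main point to watch is precisely this re-indexing: one must correctly pair the two triangles sharing each edge $e_{ij}$ and apply the symmetry to convert an angle at $v_j$ differentiated with respect to $u_i$ into an angle at $v_i$ differentiated with respect to $u_j$, so that the definition of $B_{ij}$ becomes applicable. Once this bookkeeping is handled, both equations follow without further computation.
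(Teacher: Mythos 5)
Your argument is correct and matches the paper's proof step for step: differentiate the area formula and use the second form of Lemma~\ref{Lemma:Glickenstein-lemma} to collapse the three angle derivatives into the two weighted terms of~(\ref{deri-area}), then sum over triangles at $v_i$, pair the two faces at each incident edge, and use the symmetry $\partial\theta_j^{ik}/\partial u_i=\partial\theta_i^{jk}/\partial u_j$ from Remark~\ref{Remark:Glickenstein-lemma} to recognize $B_{ij}$. One small imprecision: that symmetry is not quite ``equivalently (\ref{symmetry})''---equation~(\ref{symmetry}) concerns the $K_i$ (a sum of angles over faces) and follows from the termwise angle symmetry rather than implying it, so Remark~\ref{Remark:Glickenstein-lemma} is the correct reference and~(\ref{symmetry}) should be dropped.
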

   \begin{proof}
     The first equality (\ref{deri-area}) is obvious by Lemma \ref{Lemma:Glickenstein-lemma}. Combining (\ref{deri-area}) with the expressions for $B_{ij}$ and $A_{i}$ (\ref{AB-equality}), we obtain
    \begin{equation*}
        \begin{aligned}
            A_{i} &=\sum_{\triangle_{ijk}\in F}\frac{\partial}{\partial u_{i}}\operatorname{Area}\left(\triangle_{ijk}\right)\\[0.5em]
            &=\sum_{\triangle_{ijk}\in F}\left[\frac{\partial \theta_{j}^{ik}}{\partial u_{i}}\left(\cosh l_{ij}-1\right)+\frac{\partial \theta_{k}^{ij}}{\partial u_{i}}\left(\cosh l_{ik}-1\right)\right]\\[0.5em]
            &=\sum_{v_j \sim v_i}\left(\frac{\partial \theta_{j}^{ik}}{\partial u_{i}}+\frac{\partial \theta_{j}^{il}}{\partial u_{i}}\right)\left(\cosh l_{ij}-1\right)\\[0.5em]
            &=\sum_{v_j \sim v_i}B_{ij}\left(\cosh l_{ij}-1\right).
        \end{aligned}
    \end{equation*}
    Here, $\triangle_{ijk}$ and $\triangle_{ijl}$ denote the two triangles sharing the edge $e_{ij}$. The final equality follows from the fact that
    $\frac{\partial \theta_{j}^{ik}}{\partial u_{i}}=\frac{\partial \theta_{i}^{jk}}{\partial u_{j}}$ by Remark \ref{Remark:Glickenstein-lemma}.  \hfill $\square$
    \end{proof}
With Corollary \ref{Corollary:Glickenstein-corollary} in hand, we now consider the positive definiteness of the Jacobian matrix $L^{T}$ under the condition $\Phi \in [0,\pi)$. We first present the entries of the matrix $L^{T}$ as follows:
\begin{equation*}
    \begin{pmatrix}
    A_{1}+\sum\limits_{j\neq 1}B_{1j} & -B_{12} & \cdots  & -B_{1N} \\
    -B_{21} & A_{2}+\sum\limits_{j\neq 2}B_{2j}  & \cdots  & -B_{2N} \\
    \vdots & \vdots & \ddots  & \vdots \\
    -B_{1,N-1} & -B_{2,N-1}  & \cdots  & -B_{N-1,N} \\
    -B_{1N} & -B_{2N} & \cdots  & A_{N}+\sum\limits_{j\neq N}B_{Nj}
    \end{pmatrix}.
\end{equation*}
It can be observed that this is a diagonally dominant matrix. Moreover, according to Corollary \ref{Corollary:Glickenstein-corollary}, the diagonal entries can be expressed as multiples of $B_{ij}$; therefore, the sign of $B_{ij}$ plays a critical role in determining the positive definiteness of the matrix.

Based on this observation, Zhou \cite{Zhou2019} discussed the structural conditions for obtuse circle packings. In particular, for any triangle $\triangle_{ijk} \in F$, Zhou \cite{Zhou2019} showed that the following inequalities hold:
\begin{equation*}
    \cos \Phi_{ij}+\cos \Phi_{ik}\cos \Phi_{jk}\ge 0,\\cos \Phi_{ik}+\cos \Phi_{ij}\cos \Phi_{jk}\ge 0,\ \cos \Phi_{jk}+\cos \Phi_{ij}\cos \Phi_{ik}\ge 0.
\end{equation*}
Set $\gamma_{ijk}=\cos \Phi_{jk}+\cos \Phi_{ij}\cos \Phi_{ik}$, then the above conditions can be rewritten as
\begin{equation*}
    \label{Equation:Zhou-condition}
    \tag{$\star$}
    \ \gamma_{ijk}\ge 0,\ \gamma_{jki}\ge 0,\ \gamma_{kij}\ge 0, \forall \triangle_{ijk}\in F.
\end{equation*}
Given the $(\star)$ condition,
Xu \cite{Xu2018} proved that the positive definiteness under the inversive distance condition,
and the obtuse case considered herein is a subset of that scenario.
We now state the positive definiteness lemma.
\begin{lemma}
    \label{Lemma:Positive-definiteness-lemma}
    In $\mathbb{H}^{2}$, given $(M, \mathcal{T}, \Phi)$ and assuming that the weight $\Phi$ satisfies the $(\star)$ condition, then the Jacobian matrix $L^{T}=-\Delta=\frac{\partial(K_{1},\ldots,K_{N})}{\partial(u_{1},\ldots,u_{N})}$ is symmetric and positive definite.
\end{lemma}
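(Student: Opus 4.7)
The plan is to split the verification into two parts—symmetry and positive definiteness—using the identities from Remark \ref{Remark:Glickenstein-lemma} and Corollary \ref{Corollary:Glickenstein-corollary}.

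Symmetry is immediate from (\ref{Equation:Glickenstein-remark_2}): since $\partial \theta_i^{jk}/\partial u_j = \partial \theta_j^{ik}/\partial u_i$, summing over the two faces $\triangle_{ijk}$ and $\triangle_{ijl}$ adjacent to the edge $e_{ij}$ yields $B_{ij} = B_{ji}$, so the off-diagonal entries $-B_{ij}$ of $L^{T}$ match and $L^{T}$ is symmetric.

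For positive definiteness, the plan is to write $f^{T} L^{T} f$ for an arbitrary $f \in \mathbb{R}^{N}$ as a manifestly non-negative sum over edges. Expanding the quadratic form using the explicit entries of $L^{T}$ and the symmetry $B_{ij} = B_{ji}$ gives
\[
    f^{T} L^{T} f = \sum_{i} A_i f_i^{2} + \sum_{i<j,\, v_i \sim v_j} B_{ij}(f_i - f_j)^{2}.
\]
Substituting the identity $A_i = \sum_{v_j \sim v_i} B_{ij}(\cosh l_{ij} - 1)$ from Corollary \ref{Corollary:Glickenstein-corollary} and regrouping by edges produces
\[
    f^{T} L^{T} f = \sum_{i<j,\, v_i \sim v_j} B_{ij}\bigl[\cosh l_{ij}(f_i^{2} + f_j^{2}) - 2 f_i f_j\bigr].
\]
The bracket is the quadratic form associated to the $2 \times 2$ matrix with diagonal entries $\cosh l_{ij}$ and off-diagonal entries $-1$, whose eigenvalues $\cosh l_{ij} \pm 1$ are strictly positive because $l_{ij} > 0$.

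It remains to show $B_{ij} \geq 0$ with enough strict inequalities to force $f^{T} L^{T} f > 0$ whenever $f \neq 0$. Here the explicit formula (\ref{Equation:Glickenstein-remark_1}) is the decisive tool: the numerator of $\partial \theta_i^{jk}/\partial u_j$ is the sum of $C_k S_i^{2} S_j^{2} \sin^{2}\Phi_{ij}$, $C_i S_i S_j^{2} S_k \gamma_{jik}$, and $C_j S_i^{2} S_j S_k \gamma_{ijk}$, each of which is non-negative under the $(\star)$ condition, so every $B_{ij} \geq 0$. The main obstacle is ruling out the degenerate scenario in which $B_{ij}$ vanishes along every edge incident to a vertex $v_i$ with $f_i \neq 0$; I plan to address this by a case split on whether $\Phi_{ij} = 0$ (in which case strict positivity of $B_{ij}$ must be extracted from the $\gamma$-terms on both adjacent faces, invoking $(\star)$ non-trivially) or $\Phi_{ij} > 0$ (in which case the first summand $\sin^{2}\Phi_{ij}$ already contributes strictly). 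Combining strict positivity of $B_{ij}$ with the positive-definite edge quadratic forms above then yields $f^{T} L^{T} f > 0$ for all $f \neq 0$, completing the proof.
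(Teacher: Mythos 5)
The paper does not prove this lemma; it cites Xu \cite{Xu2018}, so there is no in-paper argument to compare against. Your symmetry argument and the two rewritings of the quadratic form are correct, but the last step of the plan has a genuine gap: under the $(\star)$ condition, $B_{ij}$ is \emph{not} always strictly positive. As the paper itself records in the proof of Theorem \ref{Theorem2_uniform_upper_bound}, $\partial\theta_i^{jk}/\partial u_j$ vanishes exactly when $\Phi_{ij}=0$ and $\Phi_{ik}+\Phi_{jk}=\pi$, so $B_{ij}=0$ whenever $\Phi_{ij}=0$, $\Phi_{ik}+\Phi_{jk}=\pi$, and $\Phi_{il}+\Phi_{jl}=\pi$. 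A concrete admissible example is $\Phi_{ij}=0$, $\Phi_{ik}=\Phi_{il}=\pi/3$, $\Phi_{jk}=\Phi_{jl}=2\pi/3$: all three $\gamma$-expressions on both adjacent faces are nonnegative, yet $B_{ij}=0$. So the plan to ``extract strict positivity of $B_{ij}$ from the $\gamma$-terms'' when $\Phi_{ij}=0$ cannot succeed, and the cosh-weighted edge sum you derived may have vanishing terms.

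The fix is to stop one step earlier and use your first identity, $f^{T}L^{T}f=\sum_i A_i f_i^2+\sum_{i<j,\,v_i\sim v_j}B_{ij}(f_i-f_j)^2$, together with the strict inequality $A_i>0$. This is what the paper establishes in the proof of Theorem \ref{Theorem2_uniform_upper_bound}: within any single face $\triangle_{ijk}$ at $v_i$, one cannot have both $\partial\theta_j^{ik}/\partial u_i=0$ and $\partial\theta_k^{ij}/\partial u_i=0$, since the first forces $\Phi_{ij}=0$ and $\Phi_{ik}+\Phi_{jk}=\pi$, the second forces $\Phi_{ik}=0$ and $\Phi_{ij}+\Phi_{jk}=\pi$, and together these give $\Phi_{jk}=\pi\notin[0,\pi)$. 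Hence $A_i\ge(\partial\theta_j^{ik}/\partial u_i)(\cosh l_{ij}-1)+(\partial\theta_k^{ij}/\partial u_i)(\cosh l_{ik}-1)>0$, and with $B_{ij}\ge 0$ on every edge, $f^{T}L^{T}f\ge\sum_i A_i f_i^2>0$ for every $f\neq 0$.
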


Furthermore, the finite lower boundness of the Laplace operator plays key role in the proof of the long-time existence of the combinatorial Calabi flows (\ref{Equation:rewritten_calabi_equation}).
Ge-Xu first studied the finite upper boundness of the Laplace operator in $\mathbb{H}^2$ when
$ \Phi \equiv 0$ (see \cite{Ge2016}, Proposition 3.1).
\begin{proposition}
    \label{Proposition:Ge-Xu-upper-bound}
    In $\mathbb{H}^{2}$, given $(M, \mathcal{T}, \Phi \equiv 0)$, we have
    \begin{equation*}
        \begin{gathered}
        0 < B_{ij} < 1, \\
        0 < A_{i} < d_{i}\cosh 1 \leq d\cosh 1,
        \end{gathered}
    \end{equation*}
where $d = \max_{1 \leq i \leq N}\{ d_{i} \}$ and $d_{i}$ denotes the degree at vertex $i$.

\end{proposition}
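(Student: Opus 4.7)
The plan is to exploit the collapse $\Phi \equiv 0 \Rightarrow l_{ij} = r_i + r_j$ and carry out explicit trigonometric computations; in this specialization every quantity in sight becomes a closed-form function of the three radii of each triangle.

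First, for the strict positivity $B_{ij}>0$ and $A_i>0$, I would specialize the formula in Remark \ref{Remark:Glickenstein-lemma} to $\cos\Phi_\bullet = 1$. The coefficient $1-\cos^2\Phi_{ij}$ then vanishes while $\gamma_{ijk}=2$, leaving a manifestly positive expression for $\partial\theta_i^{jk}/\partial u_j$. Summing the two such contributions from the triangles $\triangle_{ijk}$ and $\triangle_{ijl}$ adjacent to $e_{ij}$ yields $B_{ij}>0$, and then Corollary \ref{Corollary:Glickenstein-corollary} with $\cosh l_{ij}>1$ gives $A_i>0$.

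The main analytical step is the upper bound $B_{ij}<1$. I would compute $\partial\theta_i^{jk}/\partial u_j$ directly by differentiating the hyperbolic law of cosines $\cos\theta_i^{jk}=\frac{\cosh l_{ij}\cosh l_{ik}-\cosh l_{jk}}{\sinh l_{ij}\sinh l_{ik}}$. Since $l_{ij}=r_i+r_j$ and $l_{jk}=r_j+r_k$, derivatives of the edge lengths with respect to $r_j$ are trivial, and after converting via $\partial/\partial u_j = \sinh r_j\,\partial/\partial r_j$ one obtains an explicit rational expression in $\cosh r_\bullet$ and $\sinh r_\bullet$. Adding the contributions from the two triangles sharing $e_{ij}$ and applying the factorization $\sinh^2 l_{ij} = (\cosh l_{ij}-1)(\cosh l_{ij}+1)$ together with the addition formulas, the inequality $B_{ij}<1$ reduces to a polynomial positivity statement in $\cosh r_i,\cosh r_j,\cosh r_k$. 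I expect this algebraic simplification to be the principal technical hurdle, both in book-keeping and in choosing the right rearrangement.

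Finally, for the upper bound on $A_i$, apply Corollary \ref{Corollary:Glickenstein-corollary} to write $A_i = \sum_{v_j\sim v_i} B_{ij}(\cosh l_{ij}-1)$. A per-edge refinement of the same explicit computation, bounding $B_{ij}(\cosh l_{ij}-1)<\cosh 1$ for each neighbor, then yields $A_i < d_i\cosh 1 \leq d\cosh 1$ after summing over the $d_i$ neighbors of $v_i$.
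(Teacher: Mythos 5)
The paper does not actually prove Proposition~\ref{Proposition:Ge-Xu-upper-bound}; it is quoted from Ge--Xu \cite{Ge2016} (their Proposition~3.1) and used as a black box. So there is no in-paper argument to compare against, and I evaluate your plan on its own terms.

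Your outline is structurally sound but leaves the entire analytic content unproven. The positivity step is fine: with $\Phi\equiv 0$ the coefficient $1-\cos^2\Phi_{ij}$ vanishes, $\gamma_{\bullet}=2$, and equality in $\frac{\partial\theta_i^{jk}}{\partial u_j}\ge 0$ cannot occur (the equality case $\Phi_{ik}+\Phi_{jk}=\pi$ is excluded), so $B_{ij}>0$ and then $A_i>0$ via Corollary~\ref{Corollary:Glickenstein-corollary}. For the upper bounds, your decomposition $A_i=\sum_{v_j\sim v_i}B_{ij}(\cosh l_{ij}-1)$ over the $d_i$ neighbors is exactly the right bookkeeping, and bounding each summand indeed yields $A_i<d_i\cosh 1$. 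But ``differentiate the law of cosines, reduce to a polynomial positivity statement, expect the algebra to work out'' is a hope, not a proof: the one inequality that \emph{is} the proposition --- $\partial\theta_i^{jk}/\partial u_j<\tfrac12$ --- is left entirely to the reader. You should also note that you would not need to redifferentiate the cosine law at all: Remark~\ref{Remark:Glickenstein-lemma} already gives the closed form, which at $\Phi\equiv 0$ collapses (using $\sinh(r_i+r_j)=\sinh l_{ij}$) to $\partial\theta_i^{jk}/\partial u_j=2S_iS_jS_k/(\sinh^2 l_{ij}\sinh l_{ik}\sin\theta_i^{jk})$.

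A much cleaner route, which is almost certainly what Ge--Xu do, exploits the special feature of the tangent case that you already flagged, $l_{ij}=r_i+r_j$, through the \emph{hyperbolic half-angle formula}. With $a=r_i$, $b=r_j$, $c=r_k$ one has
\begin{equation*}
\tan\frac{\theta_i^{jk}}{2}=\sqrt{\frac{\sinh b\,\sinh c}{\sinh a\,\sinh(a+b+c)}},
\end{equation*}
and a direct differentiation, followed by the hyperbolic product-to-sum identity giving
\begin{equation*}
\sinh a\,\sinh(a+b+c)+\sinh b\,\sinh c=\sinh(a+b)\sinh(a+c),
\end{equation*}
yields the remarkably tidy expression
\begin{equation*}
\frac{\partial\theta_i^{jk}}{\partial u_j}=\frac{1}{\sinh(a+b)}\sqrt{\frac{\sinh a\,\sinh b\,\sinh c}{\sinh(a+b+c)}}.
\end{equation*}
A single AM--GM (on $s+t\ge 2\sqrt{st}$ in the notation above) then gives $\partial\theta_i^{jk}/\partial u_j\le\tfrac12\,\sinh(a+c)/\sinh(a+b+c)<\tfrac12$, hence $B_{ij}<1$. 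Moreover, multiplying by $\cosh l_{ij}-1$ and using $(\cosh x-1)/\sinh x=\tanh(x/2)<1$ gives
\begin{equation*}
\frac{\partial\theta_i^{jk}}{\partial u_j}(\cosh l_{ij}-1)=\sqrt{\frac{\sinh a\,\sinh b\,\sinh c}{\sinh(a+b+c)}}\,\tanh\frac{a+b}{2}<\frac12,
\end{equation*}
so $B_{ij}(\cosh l_{ij}-1)<1<\cosh 1$ and $A_i<d_i<d_i\cosh 1$. This is shorter than the brute-force route you propose, makes the constants transparent (the factor $\cosh 1$ in the statement is not sharp), and avoids any undone ``polynomial positivity'' bookkeeping. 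I would recommend replacing your second and third bullets with this computation, or at minimum carrying out the algebraic reduction you promise rather than deferring it.
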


\section{The first improved estimate on the discrete Laplace operator}

In this section, we establish the first improved explicit estimate on the Laplace operator (\ref{discrete Laplacian operator}) and prove Theorem \ref{main result1}. Based on the quantitative relationship (Corollary \ref{Corollary:Glickenstein-corollary})
between $A_i$ and $B_{ij}$ and their expressions (\ref{AB-equality}),
our proof strategy is to first analyze the properties of the partial derivative $\frac{\partial \theta_i}{\partial u_j}$,
and then consider its properties when these derivatives are summed.
Our crucial observation is that for every face $\triangle_{ijk} \in F$,
there exists an edge $e_{ij} \in E$ such that $B_{ij} > 0$.\par
Firstly, we prove the following inequality.
\begin{proposition}
    	\label{proof_middle_prop}
    	Given three constants \(x, y, z \in [c, 1] , c \in (-1, 0]\). Suppose that the following condition holds
    	\begin{equation}
            \label{Theorem_middle_prop_1_condition}
            x + yz \geq 0, \
            y + xz \geq 0, \
            z + xy \geq 0.
        \end{equation}
    	Then we have
    	\[
    	2 - x^2 - y^2 + x + yz + y + xz + z + xy \geq 1 + c.
    	\]    	
    \end{proposition}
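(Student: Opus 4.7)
The plan is to decompose the left-hand side and then peel off an explicit factor of $(1+c)$ using the fact that $x,y\in[c,1]$. Writing $f(x,y,z)$ for the left-hand side, I regroup it as
\[
f = (1-x^2) + (1-y^2) + (x+yz) + (y+xz) + (z+xy),
\]
so that the last three summands are non-negative by the hypothesis \eqref{Theorem_middle_prop_1_condition}. Since $x,y\leq 1$ (so $1-x,1-y\geq 0$) and $x,y\geq c$ (so $1+x,1+y\geq 1+c>0$), I sharpen the first two to $(1-x^2)\geq (1-x)(1+c)$ and $(1-y^2)\geq (1-y)(1+c)$. Combined with the identity $(x+yz)+(y+xz)=(x+y)(1+z)$, this yields a bound of the form
\[
f \;\geq\; 2(1+c) \;-\; c(x+y) \;+\; (x+y+1)\,z \;+\; xy.
\]

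An auxiliary observation that drives the rest of the argument is $x+y\geq 0$: indeed $(x+y)(1+z)=(x+yz)+(y+xz)\geq 0$ by \eqref{Theorem_middle_prop_1_condition}, and $1+z\geq 1+c>0$, so the factor $1+z$ is strictly positive. In particular $x+y+1\geq 1>0$, which lets me freely multiply inequalities by $x+y+1$ without flipping their direction. The final step is to use the combined lower bound $z\geq \max(c,-xy)$ coming from the box constraint $z\geq c$ and the hypothesis $z+xy\geq 0$, and to split into two cases: when $xy\geq -c$ the binding bound is $z\geq c$, while when $xy<-c$ the binding bound is $z\geq -xy$. A short algebraic substitution into the displayed estimate above gives, in the first case, $f\geq 2+3c+xy\geq 2(1+c)$ (using $xy\geq -c$), and in the second case $f\geq 2(1+c)-(x+y)(c+xy)\geq 2(1+c)$ (using $c+xy<0$ together with $x+y\geq 0$). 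In either case $f\geq 2(1+c)\geq 1+c$, since $1+c\geq 0$; in particular the method delivers the slightly stronger bound $2(1+c)$ for free.

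The main obstacle is the bilinear nature of the constraint $z+xy\geq 0$: it makes the feasible region non-convex, so a straightforward minimization of $f$ over the vertices of a polytope is not available, and a brute-force analysis of the many faces defined by all nine box/inequality constraints would be long. The decisive idea is to front-load the factor $1+c$ from the squared terms $(1-x^2),(1-y^2)$ before dealing with $z$, and then reduce to the two-case dichotomy on which of $z\geq c$ or $z\geq -xy$ is active; this turns the problem into elementary algebra and converts the non-convexity from an obstruction into a clean case split.
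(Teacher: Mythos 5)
Your proof is correct, and it takes a genuinely different route from the paper. The paper splits into three cases according to how $\max\{x,y\}$ and $\min\{x,y\}$ compare with $|c|$, proving in each case the crude bound $x^2+y^2 \leq 1+c^2$ or $x^2+y^2-xy\leq 1$ and then discarding (most of) the hypothesis sums. You instead factor $1-x^2=(1-x)(1+x)\geq(1-x)(1+c)$ and similarly for $y$, collect the linear-in-$z$ terms via $(x+yz)+(y+xz)=(x+y)(1+z)$, derive the auxiliary fact $x+y\geq 0$, and then split only on which of the two lower bounds $z\geq c$ or $z\geq -xy$ is active. I verified the algebra: the intermediate estimate $f\geq 2(1+c)-c(x+y)+(x+y+1)z+xy$ is correct, the positivity of $x+y+1$ justifies substituting the lower bound on $z$, and both cases yield $f\geq 2(1+c)$, which exceeds $1+c$ because $1+c>0$. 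Your approach is cleaner in that it isolates the factor $1+c$ structurally rather than by threshold-splitting on $|c|$, it uses the constraint $z+xy\geq 0$ (which the paper's argument never touches), and it delivers a uniformly stronger constant $2(1+c)$; the paper's case analysis is shorter to state but gives inconsistent and case-dependent intermediate bounds.
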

\begin{proof}
    If $\max\{x, y\} \leq |c|$, using the condition (\ref{Theorem_middle_prop_1_condition}), we obtain
    \begin{equation*}
    	\begin{aligned}
            &2 - x^2 - y^2 + (x + yz) + (y + xz) + (z + xy)
            \\
            &\geq 2 - 2c^2 + 0 + 0 + 0 \\
                                                     &\geq 2(1+c)(1-c) \\
                                                     &> 1+c.
        \end{aligned}
    \end{equation*}

    If $\max\{x, y\} > |c|$, there are two cases. In the first case  $\min\{x,y\} \leq |c|$, using condition (\ref{Theorem_middle_prop_1_condition}) and $\max\{x,y\} \leq 1$, we obtain
    \begin{equation*}
    	2 - x^2 - y^2 + (x + yz) + (y + xz) + (z + xy) \geq 2 - 1 - c^2 + 0 + 0 + 0\geq 1+c.
    \end{equation*}
    In the second case $|c| < \min\{x,y\} \leq 1$, by direct calculation we have
    \begin{equation*}
        \begin{aligned}
            &x^2 + y^2 - xy \\
            &\leq x + y - xy \\
            &= 1 - (1-x)(1-y) \\
            &\leq 1.
        \end{aligned}
    \end{equation*}.
    Hence we obtain
    \begin{equation*}
    	\begin{aligned}
    		 &2 - x^2 - y^2 + (x + yz) + (y + xz) + (z + xy) \\
             & \geq 2 - (x^2 + y^2 - xy) + z + 0 + 0\\
    		 & \geq 1 + z \\
             & \geq 1 + c.
    	\end{aligned}
    \end{equation*}
    This completes the proof of the Proposition \ref{proof_middle_prop}. \hfill $\square$
\end{proof}

\begin{theorem}
    \label{Proposition:A_i_and_B_ij_bound_compactness}
    \emph{
    Given a closed triangulated surface $(M, \mathcal{T}, \Phi)$ with $\Phi : E\rightarrow [0, \pi)$ satisfying the condition $\langle\star\rangle$.
    If the circle packing metric $r$ has a uniformly positive lower bound, i.e., $r_i \geq R > 0$ ($R$ is a constant) for any vertex $v_i \in V$,
    then there exists positive constants $a_1(\Phi, R), a_2(\Phi, R), a_3(\Phi, R)$ depending on the weight $\Phi$ and $R>0$ such that $a_1 \leq A_i \leq a_2$ and
    $0 \leq B_{ij} \leq a_3$ for any vertex $v_i, v_j \in V$.}
\end{theorem}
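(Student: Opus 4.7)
The plan is to combine the explicit formula (\ref{Equation:Glickenstein-remark_1}) of Remark \ref{Remark:Glickenstein-lemma} with the condition $\langle\star\rangle$ and Proposition \ref{proof_middle_prop}. First I would verify $B_{ij}\geq 0$: formula (\ref{Equation:Glickenstein-remark_1}) expresses $\frac{\partial \theta_i^{jk}}{\partial u_j}$ as a positive prefactor $1/(A_{ijk}\sinh^2 l_{ij})$ times the sum of three terms $C_k S_i^2 S_j^2(1-\cos^2\Phi_{ij})$, $C_i S_i S_j^2 S_k\,\gamma_{jik}$ and $C_j S_i^2 S_j S_k\,\gamma_{ijk}$: the first is nonnegative since $\Phi_{ij}\in[0,\pi)$, and the other two by $\langle\star\rangle$. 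Summing over the two faces incident to $e_{ij}$ yields $B_{ij}\geq 0$.

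For the upper bounds $B_{ij}\leq a_3$ and $A_i\leq a_2$, I would estimate the rational expression (\ref{Equation:Glickenstein-remark_1}) directly on $[R,\infty)^3$. The key observation is that the dominant exponential growth of the numerator is matched by that of the denominator $A_{ijk}\sinh^2 l_{ij}$, so $\frac{\partial \theta_i^{jk}}{\partial u_j}$ remains bounded by a constant depending only on $\Phi$ and $R$. This gives the upper bound on $B_{ij}$; combined with Corollary \ref{Corollary:Glickenstein-corollary} and a similar cancellation applied to $B_{ij}(\cosh l_{ij}-1)$, the upper bound on $A_i$ follows from the bounded vertex degree of the closed triangulation $\mathcal{T}$.

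The main difficulty is the strict positivity $A_i\geq a_1>0$. Set $c:=\min_{e\in E}\cos\Phi(e)\in(-1,1]$, which depends only on $\Phi$. Using Corollary \ref{Corollary:Glickenstein-corollary}, the symmetry (\ref{Equation:Glickenstein-remark_2}), and applying (\ref{Equation:Glickenstein-remark_1}) to both $\frac{\partial \theta_i^{jk}}{\partial u_j}$ and $\frac{\partial \theta_i^{jk}}{\partial u_k}$, I would express the contribution of each face $\triangle_{ijk}\ni v_i$ to $A_i$ as a positive linear combination of the five nonnegative quantities appearing in Proposition \ref{proof_middle_prop},
\[
\alpha_1(1-x^2)+\alpha_2(1-y^2)+\alpha_3(x+yz)+\alpha_4(y+xz)+\alpha_5(z+xy),
\]
with $(x,y,z)=(I_{ij},I_{ik},I_{jk})\in[c,1]^3$ and coefficients $\alpha_m=\alpha_m(r_i,r_j,r_k,\Phi)>0$ explicitly readable from (\ref{Equation:Glickenstein-remark_1}). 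Proposition \ref{proof_middle_prop} guarantees that the sum of the five bracketed quantities is at least $1+c>0$, so once I establish a uniform positive lower bound $\alpha_m\geq\alpha(\Phi,R)$, the contribution of each incident face to $A_i$ is at least $\alpha(\Phi,R)(1+c)$, and summing over the faces at $v_i$ yields $a_1(\Phi,R)>0$.

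The principal obstacle is this last uniform lower bound on each $\alpha_m$ over $[R,\infty)^3$. These coefficients are ratios whose numerators involve products of $\sinh r_\bullet$, $\cosh r_\bullet$ (bounded below by $\sinh R$, $\cosh R$) and one of $\cosh l_{ij}-1$ or $\cosh l_{ik}-1$ (bounded below by $\sinh^2 R\,(1+c)$ using the lower bounds on $r$ and on $\cos\Phi$), while the denominators contain $A_{ijk}\sinh^2 l_{ij}$ or $A_{ijk}\sinh^2 l_{ik}$, which may grow without bound as $r_\bullet\to\infty$. Showing that the ratio does not degenerate requires the same explicit cancellation of leading exponential growth as in the upper-bound step, now applied to the reverse inequality. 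This asymptotic matching of numerator and denominator, which also justifies the implicit positivity of $A_{ijk}$, is the technical heart of the argument.
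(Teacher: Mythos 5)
Your proposal follows the paper's proof essentially exactly: both rely on formula (\ref{Equation:Glickenstein-remark_1}), positivity of the three numerator terms under $\langle\star\rangle$, Corollary \ref{Corollary:Glickenstein-corollary} to reduce the lower bound on $A_i$ to the per-face quantity $\frac{\partial\theta_j^{ik}}{\partial u_i}(\cosh l_{ij}-1)+\frac{\partial\theta_k^{ij}}{\partial u_i}(\cosh l_{ik}-1)$, Proposition \ref{proof_middle_prop} to bound the sum of the five weight-polynomials away from zero, and exponential asymptotics from $r_i\geq R$ to match the leading growth of numerator and denominator. The ``technical heart'' you defer — the two-sided exponential bounds on $S_i$, $C_i$, $\cosh l_{ij}-1$, $(C_iC_j+\varphi_{ij}S_iS_j)^2-1$, and $\sqrt{\Delta}$ — is exactly what the paper's proof supplies in (\ref{Equation:compact_proof_sinh_bound})--(\ref{Equation:compact_proof_denominator_lower_bound}), with $a=(1-e^{-R})/2$, confirming that your plan can be completed as sketched.
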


\noindent\begin{proof}

   First by (\ref{Equation:Glickenstein-remark_1}), we have
    \begin{equation*}
        \frac{\partial{\theta_i^{jk}}}{\partial{u_j}} = \frac{\partial{\theta_j^{ik}}}{\partial{u_i}}
        = \frac{1}{A_{ijk}\sinh^2{l_{ij}}}[C_kS_i^2S_j^2(1 - {\cos^2{\Phi_{ij}}})
        + C_iS_iS_j^2S_k\gamma_{jik} + C_jS_i^2S_jS_k\gamma_{ijk}],
    \end{equation*}
    where $C_i = \cosh r_i, S_i = \sinh r_i, A_{ijk} = \sinh{l_{ij}}\sinh{l_{ik}}\sin{\theta_i^{jk}}$ and
    $\gamma_{ijk} = \cos{\Phi_{jk}} + \cos{\Phi_{ij}}\cos{\Phi_{ik}}$.
    Since the weights satisfy the condition $\langle\star\rangle$
    (i.e., $\forall \triangle_{ijk} \in F, \, \gamma_{ijk} \geq 0 , \gamma_{jik} \geq 0 , \gamma_{kij} \geq 0$) and the fact that $l_{ij}, l_{ik}, l_{jk} > 0$ and $\theta_{i}^{jk} \in (0,\pi)$,
    this immediately implies
    \begin{equation*}
        \frac{\partial \theta_{i}^{jk}}{\partial u_j} \geq 0,
    \end{equation*}
    where the equality holds when $\Phi_{ij}=0$ and $\Phi_{ik}+\Phi_{jk}=\pi$.
    If $\triangle_{ijk} \in F$ and $\triangle_{ijl} \in F$ are adjacent faces,
    then $B_{ij} = \partial \theta_{i}^{jk} / \partial u_j + \partial \theta_{i}^{jl} / \partial u_j \geq 0$, where the equality holds when
    $\Phi_{ij}=0$, $\Phi_{ik}+\Phi_{jk}=\pi$, and $\Phi_{il}+\Phi_{jl}=\pi$.

    Set
    \begin{equation*}
        \cos{\Phi_{ij}} = \varphi_{ij},\ \cos{\Phi_{jk}} = \varphi_{jk},\ \cos{\Phi_{ik}} = \varphi_{ik}.
    \end{equation*}
    By calculation we have
    \begin{equation}
        \label{Equation:compact_proof_basic_expression_total}
        \frac{\partial{\theta_i^{jk}}}{\partial{u_j}} = \frac{\partial{\theta_j^{ik}}}
        {\partial{u_i}}
        =\frac{C_kS_i^2S_j^2(1-\varphi_{ij}^2) + C_iS_iS_j^2S_k(\varphi_{ik} + \varphi_{ij}\varphi_{jk}) + C_jS_i^2S_jS_k(\varphi_{jk} + \varphi_{ij}\varphi_{ik})}
        {[(C_iC_j + \varphi_{ij}S_iS_j)^2 - 1]\sqrt{\Delta}},
    \end{equation}
    where
    \begin{equation}
        \label{Equation:compact_proof_basic_expression_delta}
        \begin{aligned}
            \Delta
            &= (2+2\varphi_{ij}\varphi_{jk}\varphi_{ik})S_i^2S_j^2S_k^2 + (1-\varphi_{ij}^2)S_i^2S_j^2 + (1-\varphi_{jk}^2)S_j^2S_k^2
            + (1-\varphi_{ik}^2)S_i^2S_k^2 \\[1mm]
            &+ (2\varphi_{ij} + 2\varphi_{jk}\varphi_{ik})C_iC_jS_iS_jS_k^2 + (2\varphi_{jk} + 2\varphi_{ij}\varphi_{ik})C_jC_kS_jS_kS_i^2
            + (2\varphi_{ik} + 2\varphi_{ij}\varphi_{jk})C_iC_kS_iS_kS_j^2.
        \end{aligned}
    \end{equation}

    Given that $r_i \geq R > 0$ for every $v_i \in V$,
    and $S_i = \sinh{r_i} = \frac{1-e^{-2r_i}}{2}e^{r_i}$ and $C_i = \cosh{r_i} = \frac{1+e^{-2r_i}}{2}e^{r_i}$,
    it follows that there exists a positive constant
    $a(R) = \frac{1 - e^{-R}}{2}$ such that
    \begin{equation*}
    	2a^2 = \frac{1 + e^{-2R} - 2e^{-R}}{2} < \frac{1 - e^{-2R}}{2}.
    \end{equation*}
    Since $\frac{1 - e^{-2r_i}}{2} \geq \frac{1 - e^{-2R}}{2} > 2a^2$, we have
    \begin{equation}
        \label{Equation:compact_proof_sinh_bound}
    	2a^2e^{r_i}  \leq S_i \leq e^{r_i}  \ \text{for} \ i =1,2,\cdots, N.
    \end{equation}
	By the definition of the hyperbolic sine function, we have
	\begin{equation*}
		C_i = 2 \sinh^2 \frac{r_i}{2} + 1 = 2 (\frac{1 - e^{-r_i}}{2})^2 e^{{r_i}} + 1 \geq 2a^2e^{r_i} + 1,
	\end{equation*}
	hence we obtain
	\begin{equation}
        \label{Equation:compact_proof_cosh_bound}
	 2a^2e^{r_i} + 1 \leq C_i \leq e^{r_i}\ \text{for} \ i =1,2,\cdots, N.
	\end{equation}

    Now, we establish an upper and a lower bounds estimate for $\cosh l_{ij}-1$. We begin by expanding its expression as follows.
    \begin{equation*}
    	\begin{aligned}
    		\cosh{l_{ij}} - 1
    		& = C_iC_j + \varphi_{ij}S_iS_j - 1 \\[1mm]
    		& = \frac{e^{r_i} + e^{-r_i}}{2}\frac{e^{r_j} + e^{-r_j}}{2} + \varphi_{ij} \frac{e^{r_i} - e^{-r_i}}{2}\frac{e^{r_j} - e^{-r_j}}{2} -1 \\[1mm]
    		& = \frac{(1+\varphi_{ij})(e^{r_i+r_j}+e^{-r_i-r_j}) + (1-\varphi_{ij})(e^{-r_i+r_j}+e^{r_i-r_j})}{4} -1 \\[1mm]
    		& = e^{r_i+r_j}\frac{(1+\varphi_{ij})(1+e^{-2r_i-2r_j}) + (1-\varphi_{ij})(e^{-2r_i}+e^{-2r_j} )- 4 e^{-r_i-r_j}}{4}.
    	\end{aligned}
    \end{equation*}
    Let $\bar{\varphi} = \min\{\,0, \min\limits_{e_{ij}\in E} \varphi_{ij}\}\,$.
    By (\ref{Equation:compact_proof_sinh_bound}) and (\ref{Equation:compact_proof_cosh_bound}),
    we obtain the lower bound estimate
    \begin{equation}
        \label{Equation:compact_proof_cosh_lower_bound}
    	\begin{split}
    		\cosh{l_{ij}} - 1
            & \geq(2a^2e^{r_i} + 1) (2a^2e^{r_j} + 1) + 4\varphi_{ij}a^{4}e^{r_i+r_j} - 1 \\[1mm]
            & = 4a^4 (1 + \varphi_{ij}) e^{r_i + r_j} + 2a^2 e^{r_i} + 2a^2 e^{r_j}\\[1mm]
    		& \geq 4a^4 (1 + \bar\varphi) e^{r_i + r_j} \triangleq m_1e^{r_i + r_j},
    	\end{split}
    \end{equation}
     and the upper bound estimate
    \begin{equation}
        \label{Equation:compact_proof_cosh_upper_bound}
        \begin{split}
            \cosh{l_{ij}} - 1
            & = C_i C_j + \varphi_{ij} S_i S_j - 1\\[1mm]
            & \leq e^{r_i+r_j} + e^{r_i+r_j} - 1 \\[1mm]
            & \leq 2e^{r_i+r_j} \triangleq m_2 e^{r_i+r_j}.
    \end{split}
    \end{equation}
    In summary, we obtain two positive constants
    \begin{equation*}
        m_1(R, \Phi) = 4a^4(1+\bar{\varphi}) \ \text{ and } \ m_2 = 2.
    \end{equation*}

    Now we provide an upper and a lower bound estimate for the numerator of (\ref{Equation:compact_proof_basic_expression_total}).
    Combining the estimates (\ref{Equation:compact_proof_sinh_bound}) and (\ref{Equation:compact_proof_cosh_bound}) with the condition $\varphi \in (-1,1]$ for any $e_{ij} \in E$, we obtain the upper bound
    \begin{equation}
        \label{Equation:compact_proof_numerator_upper_bound}
        \begin{split}
            & C_kS_i^2S_j^2(1 - \varphi_{ij}^2) + C_iS_iS_j^2S_k(\varphi_{ik} + \varphi_{ij}\varphi_{jk}) + C_jS_i^2S_jS_k(\varphi_{jk} + \varphi_{ij}\varphi_{ik}) \\[1mm]
            &\leq [(1 - \varphi_{ij}^2) +
            (\varphi_{ik} + \varphi_{ij}\varphi_{jk}) +
            (\varphi_{jk} + \varphi_{ij}\varphi_{ik})]e^{2r_i + 2r_j + r_k} \\[1mm]
            & \leq (5-\varphi_{ij}^2)e^{2r_i + 2r_j + r_k}\\[1mm]
            &\leq 5e^{2r_i + 2r_j + r_k} \triangleq m_3e^{2r_i + 2r_j + r_k}.
        \end{split}
    \end{equation}
    By the condition $\langle\star\rangle$ (i.e., for any $\triangle_{ijk}\in F$, we have $\varphi_{ij}+\varphi_{ik}\varphi_{jk}\geq 0$) and the two estimates (\ref{Equation:compact_proof_sinh_bound}) and (\ref{Equation:compact_proof_cosh_bound}),
    we obtain the lower bound
    \begin{equation}
        \label{Equation:compact_proof_numerator_lower_bound}
        \begin{split}
            & C_kS_i^2S_j^2(1 - \varphi_{ij}^2) + C_iS_iS_j^2S_k(\varphi_{ik} + \varphi_{ij}\varphi_{jk}) + C_jS_i^2S_jS_k(\varphi_{jk} + \varphi_{ij}\varphi_{ik}) \\
            &\geq 32a^{10}[(1 - \varphi_{ij}^2) +
            (\varphi_{ik} + \varphi_{ij}\varphi_{jk}) +
            (\varphi_{jk} + \varphi_{ij}\varphi_{ik})]e^{2r_i + 2r_j + r_k} \\
            &\triangleq m_4e^{2r_i + 2r_j + r_k}.
        \end{split}
    \end{equation}
    Therefore, we obtain another two positive constants
    \begin{equation*}
        m_3=5  \ \text{ and } \ m_4(R, \Phi) =  32a^{10}[(1 - \varphi_{ij}^2) +
        (\varphi_{ik} + \varphi_{ij}\varphi_{jk}) +
        (\varphi_{jk} + \varphi_{ij}\varphi_{ik})].
    \end{equation*}

    Next, we provide an upper and a lower bound estimate for the denominator of (\ref{Equation:compact_proof_basic_expression_total}), which consists of two terms.
    We begin with the upper bound of the first term $(C_iC_j + \varphi_{ij}S_iS_j)^2 - 1$ as follows.
    \begin{equation}
        \label{Equation:compact_proof_denominator_first_part_upper_bound}
		\begin{split}
			&(C_iC_j+\varphi_{ij}S_iS_j)^2-1\\[1mm]
			&=(C_iC_j+\varphi_{ij}S_iS_j+1)(C_iC_j+\varphi_{ij}S_iS_j-1)\\[1mm]
			&\leq (e^{r_i+r_j} + e^{r_i+r_j} + 1) (e^{r_i+r_j} + e^{r_i+r_j} - 1)\\[1mm]
			&\leq (2e^{r_i + r_j} + 1)(2e^{r_i + r_j})\\[1mm]
			&\leq 6e^{2r_i+2r_j}\triangleq m_5e^{2r_i+2r_j}.
		\end{split}
	\end{equation}
    For the first term, we obtain a lower bound estimate
    \begin{equation}
        \label{Equation:compact_proof_denominator_first_part_lower_bound}
        \begin{split}
        	&(C_iC_j+\varphi_{ij}S_iS_j)^2-1\\[1mm]
        	& = (C_i C_j + \varphi_{ij} S_i S_j + 1) (C_i C_j + \varphi_{ij} S_i S_j - 1)\\[1mm]
        	& \geq ((2a^2e^{r_i}+1)(2a^2e^{r_j}+1) + 4a^4 \varphi_{ij} e^{r_i + r_j} + 1)((2a^2e^{r_i}+1)(2a^2e^{r_j}+1) + 4a^4 \varphi_{ij} e^{r_i + r_j} - 1)\\[1mm]
        	&=(4a^4 (1 + \varphi_{ij}) e^{r_i + r_j} + 2a^2 e^{r_i} + 2a^2 e^{r_j} + 2) (4a^4 (1 + \varphi_{ij}) e^{r_i + r_j} + 2a^2 e^{r_i} + 2a^2 e^{r_j} )\\[1mm]
        	&\geq 16a^8(1+\bar\varphi)^2e^{2r_i+2r_j}\triangleq m_6e^{2r_i+2r_j},
        \end{split}
    \end{equation}
    where we used the two estimates (\ref{Equation:compact_proof_sinh_bound}) and (\ref{Equation:compact_proof_cosh_bound}) in the third line.
    Hence, we obtain
    \begin{equation*}
        m_5 = 6 \ \text{ and } \ m_6(R,\Phi) =  16a^8(1+\bar\varphi)^2.
    \end{equation*}

    For the second part $\sqrt{\Delta}$, first we have the following upper bound estimate
        \begin{equation}
        \label{Equation:compact_proof_denominator_second_part_upper_bound}
        \begin{aligned}
            \Delta
            &= (2+2\varphi_{ij}\varphi_{jk}\varphi_{ik})S_i^2S_j^2S_k^2 + (1-\varphi_{ij}^2)S_i^2S_j^2 + (1-\varphi_{jk}^2)S_j^2S_k^2
            + (1-\varphi_{ik}^2)S_i^2S_k^2 \\[1mm]
            &\quad + (2\varphi_{ij} + 2\varphi_{jk}\varphi_{ik})C_iC_jS_iS_jS_k^2 + (2\varphi_{jk} + 2\varphi_{ij}\varphi_{ik})C_jC_kS_jS_kS_i^2
            + (2\varphi_{ik} + 2\varphi_{ij}\varphi_{jk})C_iC_kS_iS_kS_j^2 \\[1mm]
            &\leq (2+2\varphi_{ij}\varphi_{jk}\varphi_{ik})e^{2r_i + 2r_j + 2r_k}+(1-\varphi_{ij}^2)e^{2r_i + 2r_j}
            + (1-\varphi_{jk}^2)e^{2r_j + 2r_k} + (1-\varphi_{ik}^2)e^{2r_i + 2r_k} \\[1mm]
            &\quad + (2\varphi_{ij} + 2\varphi_{jk}\varphi_{ik})e^{2r_i + 2r_j + 2r_k}
            +(2\varphi_{jk} + 2\varphi_{ij}\varphi_{ik})e^{2r_i + 2r_j + 2r_k}
            +(2\varphi_{ik} + 2\varphi_{ij}\varphi_{jk})e^{2r_i + 2r_j + 2r_k} \\[1mm]
            &\leq 4e^{2r_i + 2r_j + 2r_k}  + (3 - \varphi_{ij}^2 - \varphi_{jk}^2 - \varphi_{ik}^2)e^{2r_i + 2r_j + 2r_k} + 4 e^{2r_i + 2r_j + 2r_k} + 4e^{2r_i + 2r_j + 2r_k} + 4e^{2r_i + 2r_j + 2r_k}\\[1mm]
            &\leq (19- \varphi_{ij}^2 - \varphi_{jk}^2 - \varphi_{ik}^2)e^{2r_i + 2r_j + 2r_k} \\[1mm]
            &\leq 19e^{2r_i + 2r_j + 2r_k} \triangleq m_7e^{2r_i + 2r_j + 2r_k}.
        \end{aligned}
    \end{equation}
    Then we give the lower bound estimate as follows.
    \begin{equation}
        \label{Equation:compact_proof_denominator_second_part_lower_bound}
        \begin{aligned}
            \Delta
            &= (2+2\varphi_{ij}\varphi_{jk}\varphi_{ik})S_i^2S_j^2S_k^2 + (1-\varphi_{ij}^2)S_i^2S_j^2 + (1-\varphi_{jk}^2)S_j^2S_k^2
            + (1-\varphi_{ik}^2)S_i^2S_k^2 \\[1mm]
            &\quad + (2\varphi_{ij} + 2\varphi_{jk}\varphi_{ik})C_iC_jS_iS_jS_k^2 + (2\varphi_{jk} + 2\varphi_{ij}\varphi_{ik})C_jC_kS_jS_kS_i^2
            + (2\varphi_{ik} + 2\varphi_{ij}\varphi_{jk})C_iC_kS_iS_kS_j^2 \\[1mm]
            &\geq 128(1+\varphi_{ij}\varphi_{jk}\varphi_{ik})a^{12}e^{2r_i + 2r_j + 2r_k} \\[1mm]
            &\quad +16(1-\varphi_{ij}^2)a^8e^{2r_i + 2r_j}
            + 16(1-\varphi_{jk}^2)a^8e^{2r_j + 2r_k} + 16(1-\varphi_{ik}^2)a^8e^{2r_i + 2r_k} \\[1mm]
            &\quad + 128(\varphi_{ij} + \varphi_{jk}\varphi_{ik} + \varphi_{jk} + \varphi_{ij}\varphi_{ik} + \varphi_{ik} + \varphi_{ij}\varphi_{jk}) a^{12} e^{2r_i + 2r_j + 2r_k} \\[1mm]
            &\geq 128(1+\varphi_{ij}\varphi_{jk}\varphi_{ik})a^{12}e^{2r_i + 2r_j + 2r_k} \\[1mm]
            &\geq 128(1+\bar{\varphi})a^{12}e^{2r_i + 2r_j + 2r_k} \triangleq m_8e^{2r_i + 2r_j + 2r_k}.
       \end{aligned}
    \end{equation}
    The first inequality follows from the estimates (\ref{Equation:compact_proof_sinh_bound}) and (\ref{Equation:compact_proof_cosh_bound}), together with the inequality $2a^2 e^{r_i} + 1 > 2a^2 e^{r_i}$.
    The section inequality follows from the range of $\varphi_{ij}$ and the condition $\langle \star \rangle$.
    The last inequality follows from the fact that for any $x,y,z \in (-1,1]$, $xyz \geq \min\{0,\min\{x,y,z\}\}$.

    Consequently, we obtain the fourth collection of positive constants
    \begin{equation*}
        m_7(R) = 19 \ \text{ and } \  m_8(R,\Phi) = 128(1+\bar{\varphi})a^{12}.
    \end{equation*}
	Thus, multipling the two estimates (\ref{Equation:compact_proof_denominator_first_part_upper_bound}) and (\ref{Equation:compact_proof_denominator_second_part_upper_bound}) gives
	\begin{equation}
        \label{Equation:compact_proof_denominator_upper_bound}
	    \begin{aligned}
            ((C_iC_j + \varphi_{ij}S_iS_j)^2 - 1)\sqrt{\Delta} &\leq m_5e^{2r_i+2r_j}\sqrt{m_7e^{2r_i+2r_j+2r_k}} \\[1mm]
            &= 6e^{2r_i+2r_j} \sqrt{19e^{2r_i+2r_j+2r_k}} \\[1mm]
            &\leq 30e^{3r_i+3r_j+r_k},
	    \end{aligned}
	\end{equation}	
	and multipling the two estimates (\ref{Equation:compact_proof_denominator_first_part_lower_bound}) and (\ref{Equation:compact_proof_denominator_second_part_lower_bound}) gives
	\begin{equation}
        \label{Equation:compact_proof_denominator_lower_bound}
		\begin{aligned}
			((C_iC_j + \varphi_{ij}S_iS_j)^2 - 1)\sqrt{\Delta} &\geq m_6e^{2r_i+2r_j}\sqrt{m_8e^{2r_i+2r_j+2r_k}} \\[1mm]
			&=16a^8(1+\bar{\varphi})^2e^{2r_i+2r_j}\sqrt{128(1+\bar{\varphi})a^{12}e^{2r_i+2r_j+2r_k}} \\[1mm]
			&\geq 128 a^{14}(1+\bar{\varphi})^2\sqrt{1+\bar{\varphi}}\ e^{3r_i+3r_j+r_k}.
		\end{aligned}
	\end{equation}
		
    Combining the upper bound estimate (\ref{Equation:compact_proof_numerator_upper_bound})
    of the numerator and the lower bound estimate (\ref{Equation:compact_proof_denominator_lower_bound}) of the denominator, we obtain an upper bound estimate of (\ref{Equation:compact_proof_basic_expression_total})
    \begin{equation}
    	\label{Equation_partial_partial_upper}
    	\begin{aligned}
    		\frac{\partial{\theta_{j}^{ik}}}{\partial{u_i}}
    		&\leq \frac{m_3e^{2r_i+2r_j+r_k}}{ 128 a^{14}(1+\bar{\varphi})^2\sqrt{1+\bar{\varphi}}e^{3r_i+3r_j+r_k}}  \\[1mm]
    		& =\frac{5}{ 128 a^{14}(1+\bar{\varphi})^2\sqrt{1+\bar{\varphi}} }e^{-r_i-r_j}.
    	\end{aligned}
    \end{equation}
    Multiplying (\ref{Equation_partial_partial_upper}) and (\ref{Equation:compact_proof_cosh_upper_bound}) gives
    \begin{equation}
    	 \label{Equation:compact_proof_theta_cosh_upper_bound}
    	\begin{aligned}
    		\frac{\partial{\theta_{j}^{ik}}}{\partial{u_i}}(\cosh{l_{ij}}-1) &\leq \frac{5}{128 a^{14}(1+\bar{\varphi})^2\sqrt{1+\bar{\varphi}} }e^{-r_i-r_j} m_2e^{r_i + r_j}
    		\\[1mm]
    		& =\frac{5}{64 a^{14}(1+\bar{\varphi})^2\sqrt{1+\bar{\varphi}} }.
    	\end{aligned}    	
    \end{equation}

    Combining the lower bound estimate (\ref{Equation:compact_proof_numerator_lower_bound})
    of the numerator and the upper bound estimate (\ref{Equation:compact_proof_denominator_upper_bound}) of the denominator, we obtain a lower bound estimate of (\ref{Equation:compact_proof_basic_expression_total})
    \begin{equation}
    	\label{Equation_partial_partial_lower}
    	\begin{aligned}
    		\frac{\partial{\theta_{j}^{ik}}}{\partial{u_i}}
    		&\geq \frac{m_4e^{2r_i+2r_j+r_k}}{30e^{3r_i+3r_j+r_k}} \\[1mm]
    		& = \frac{ 16a^{10}[(1 - \varphi_{ij}^2) +
    			(\varphi_{ik} + \varphi_{ij}\varphi_{jk}) +
    			(\varphi_{jk} + \varphi_{ij}\varphi_{ik})]}{15}e^{-r_i-r_j}.
    	\end{aligned}
    \end{equation}
    Multiplying (\ref{Equation_partial_partial_lower}) and (\ref{Equation:compact_proof_cosh_lower_bound}) gives
   \begin{equation}
    \label{Equation:compact_proof_theta_cosh_lower_bound}
   \begin{aligned}
      \frac{\partial{\theta_{j}^{ik}}}{\partial{u_i}}(\cosh{l_{ij}}-1)
      &\geq \frac{ 16a^{10}[(1 - \varphi_{ij}^2) +
      	(\varphi_{ik} + \varphi_{ij}\varphi_{jk}) +
      	(\varphi_{jk} + \varphi_{ij}\varphi_{ik})]}{15}e^{-r_i-r_j} m_1 e^{r_i+r_j} \\[1mm]
      &= \frac{64a^{14}[(1 - \varphi_{ij}^2) +
                                (\varphi_{ik} + \varphi_{ij}\varphi_{jk}) +
                                (\varphi_{jk} + \varphi_{ij}\varphi_{ik})](1+\bar{\varphi})}{15}
   \end{aligned}
   \end{equation}
   By Proposition \ref{proof_middle_prop} we obtain the following lower bound
    \begin{equation}
        \label{Equation:compact_proof_theta_cosh_plus_theta_partial_lower_bound}
        \begin{split}
                &\frac{\partial{\theta_{j}^{ik}}}{\partial{u_i}}(\cosh{l_{ij}}-1)+ \frac{\partial{\theta_{k}^{ij}}}{\partial{u_i}}(\cosh{l_{ik}}-1) \\[1mm]
                &\geq
                    \frac{64a^{14}(1+\bar{\varphi})[
                        (2-\varphi_{ij}^2-\varphi_{ik}^2)+	
                        (\varphi_{ik} + \varphi_{ij}\varphi_{jk})
                    +   (\varphi_{jk} + \varphi_{ij}\varphi_{ik})
                    +	(\varphi_{ij} + \varphi_{ik}\varphi_{jk})
                    +   (\varphi_{jk} + \varphi_{ij}\varphi_{ik})
                    ]}{15} \\[1mm]
            & \geq \frac{64a^{14}(1+\bar{\varphi})(1+\bar\varphi)}{15}\\[1mm]
            & = \frac{64a^{14}(1+\bar{\varphi})^2}{15}.
        \end{split}
    \end{equation}
    By the expression (\ref{AB-equality}) of $B_{ij}$, we have $$B_{ij}(\cosh{l_{ij}}-1) + B_{ik}(\cosh{l_{ik}}-1) = (\frac{\partial{\theta_i^{jk}}}{\partial{u_j}}
+ \frac{\partial{\theta_i^{jl}}}{\partial{u_j}})(\cosh{l_{ij}}-1)+(\frac{\partial{\theta_i^{kj}}}{\partial{u_k}}
+ \frac{\partial{\theta_i^{km}}}{\partial{u_k}})(\cosh{l_{ik}}-1).$$
 Therefore, combining (\ref{Equation:compact_proof_theta_cosh_upper_bound}) and (\ref{Equation:compact_proof_theta_cosh_plus_theta_partial_lower_bound}) gives
\begin{equation*}
    \frac{64a^{14}(1+\bar{\varphi})^2}{15} \leq B_{ij}(\cosh{l_{ij}}-1) + B_{ik}(\cosh{l_{ik}}-1) \leq \frac{5}{16 a^{14}(1+\bar{\varphi})^2\sqrt{1+\bar{\varphi}} }.
\end{equation*}
Similarly, we obtain the following estimate of $A_i$
\begin{equation}
        \label{Equation:compact_proof_A_i_bound}
        \frac{64a^{14}(1+\bar{\varphi})^2}{15} \leq A_i = \sum_{j \thicksim i}{B_{ij}(\cosh{l_{ij}} - 1)} \leq \frac{5N}{64 a^{14}(1+\bar{\varphi})^2\sqrt{1+\bar{\varphi}} }.
\end{equation}
and the following estimate of $B_{ij}$
    \begin{equation}
        \label{Equation:compact_proof_B_ij_bound}
           0 \leq
            B_{ij} = {(\frac{\partial{\theta_i^{jk}}}{\partial{u_j}}
            	+ \frac{\partial{\theta_i^{jl}}}{\partial{u_j}})} \leq
                \frac{5}{ 64 a^{14}(1+\bar{\varphi})^2\sqrt{1+\bar{\varphi}} }e^{-r_i-r_j} \leq
                \frac{5}{ 64 a^{14}(1+\bar{\varphi})^2\sqrt{1+\bar{\varphi}} }.
    \end{equation}
    By (\ref{Equation:compact_proof_A_i_bound}) and (\ref{Equation:compact_proof_B_ij_bound}), set the constants as
    \begin{equation*}
    a_1(\Phi, R) = \frac{64a^{14}(1+\bar{\varphi})^2}{15}, a_2(\Phi, R) = \frac{5N}{64 a^{14}(1+\bar{\varphi})^2\sqrt{1+\bar{\varphi}}}, a_3(\Phi, R) = \frac{5}{ 64 a^{14}(1+\bar{\varphi})^2\sqrt{1+\bar{\varphi}} },
    \end{equation*}
    where $a= a(R) = \frac{1 - e^{-R}}{2}>0$ is a positive constant depending only on the constant $R>0$. We complete the proof.    $\hfill\Box$
\end{proof}

\section{The second improved estimate on the discrete Laplace operator}
In this section, we improve Theorem \ref{main result1} after removing the assumption of circle packing metrics with uniform positive lower bounds, i.e., $r_i \geq R > 0, \forall v_i \in V$ for a constant $R>0$ and prove Theorem \ref{main result2} by using the method of proof by cases. In fact, we show that for any the circle packing metric $r$, the positive upper bound constants $a_2(\Phi, R), a_3(\Phi, R)$ in Theorem \ref{main result1} do not depend on the constant $R>0$ in fact.
\begin{theorem}
    \label{Theorem2_uniform_upper_bound}
    \emph{Given a closed triangulated surface $(X, T, \Phi)$ with  $\Phi : E\rightarrow [0, \pi)$ satisfying the ($\star$) condition.
    Then for any the circle packing metric $r$, there exists positive constants $C_1(\Phi), C_2(\Phi)$ depending only on the weight $\Phi$
     such that $0<A_i \leq C_1$ and
     $0<B_{ij} \leq C_2$ for any vertex $v_i, v_j \in V$.   }
\end{theorem}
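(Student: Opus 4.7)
The plan is to improve the upper bounds of Theorem \ref{main result1} by removing the $R$-dependence through a case analysis on the sizes of the radii at each face. Observe from Corollary \ref{Corollary:Glickenstein-corollary} that $A_i=\sum_{v_j\sim v_i}(\cosh l_{ij}-1)B_{ij}$ and from \eqref{AB-equality} that $B_{ij}=\partial\theta_i^{jk}/\partial u_j+\partial\theta_i^{jl}/\partial u_j$. Since the triangulation $\mathcal{T}$ is fixed, it therefore suffices to show that for every triangle $\triangle_{ijk}\in F$, both
\[
\frac{\partial\theta_i^{jk}}{\partial u_j} \qquad\text{and}\qquad (\cosh l_{ij}-1)\frac{\partial\theta_i^{jk}}{\partial u_j}
\]
are bounded above by constants depending only on $\Phi$.

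First, I would use the identity $\sinh^2 l_{ij}=(\cosh l_{ij}-1)(\cosh l_{ij}+1)$ together with formula \eqref{Equation:Glickenstein-remark_1} to obtain the simplification
\[
(\cosh l_{ij}-1)\frac{\partial\theta_i^{jk}}{\partial u_j}=\frac{C_kS_i^2S_j^2(1-\varphi_{ij}^2)+C_iS_iS_j^2S_k\gamma_{jik}+C_jS_i^2S_jS_k\gamma_{ijk}}{\sqrt{\Delta}\,(C_iC_j+\varphi_{ij}S_iS_j+1)}.
\]
By the $(\star)$ condition each of the three numerator terms and every summand of $\Delta$ is nonnegative, and the factor $C_iC_j+\varphi_{ij}S_iS_j+1=\cosh l_{ij}+1\geq 2$ is bounded below uniformly.

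Next, I would fix the threshold $R_0=1$ and split into two cases. In Case A, when $\min\{r_i,r_j,r_k\}\geq R_0$ holds for the triangle under consideration, Theorem \ref{main result1} applied with $R=R_0$ already provides bounds depending only on $\Phi$. In Case B, at least one of the radii is smaller than $R_0$; I would subdivide according to which of $r_i,r_j,r_k$ are small. In each subcase the small radius yields an absolute upper bound $S\leq\sinh R_0$ and $C\leq\cosh R_0$ for the corresponding factors in the numerator, while for the denominator I pick out, from the seven nonnegative summands of $\Delta$, a term of matching $S$-degree to provide a lower bound: for instance $(1-\varphi_{jk}^2)S_j^2S_k^2$ when $\Phi_{jk}\neq 0$, or a cross term involving a $\gamma$-factor (positive by $(\star)$) when $\varphi_{jk}=1$. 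Comparison then yields bounds depending only on $\Phi$. The bound on $\partial\theta_i^{jk}/\partial u_j$ alone (without the $\cosh l_{ij}-1$ factor) is handled similarly from \eqref{Equation:Glickenstein-remark_1}.

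The main obstacle will be the degenerate regime where several radii approach $0$ simultaneously, since both the numerator and $\sqrt{\Delta}$ then shrink. The saving observation is that the numerator terms are all of order $O(S^4)$ in the small radii, while the summands $(1-\varphi_{ij}^2)S_i^2S_j^2$, $(1-\varphi_{jk}^2)S_j^2S_k^2$, $(1-\varphi_{ik}^2)S_i^2S_k^2$ (or their $\gamma$-term replacements when some $\varphi=1$) give a lower bound $\Delta\geq c(\Phi)S^4$, whence $\sqrt{\Delta}$ is at least of order $S^2$, and the ratio stays bounded (in fact tends to $0$). Finally, the strict positivity $A_i>0$ follows from Lemma \ref{Lemma:Positive-definiteness-lemma}: the diagonal entry $A_i+\sum_j B_{ij}$ of the positive-definite matrix $L^T$ is strictly positive, and combined with $B_{ij}\geq 0$ and $\cosh l_{ij}-1>0$, the identity $A_i=\sum_{v_j\sim v_i}(\cosh l_{ij}-1)B_{ij}$ forces $A_i>0$; correspondingly $B_{ij}>0$ follows from the explicit formula under the $(\star)$ condition except in the edge-degenerate configurations $\Phi_{ij}=0,\Phi_{ik}+\Phi_{jk}=\pi,\Phi_{il}+\Phi_{jl}=\pi$.
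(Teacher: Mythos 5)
Your overall strategy matches the paper's: reduce to bounding $T_1=\partial\theta_j^{ik}/\partial u_i$ and $T_2=(\cosh l_{ij}-1)\,\partial\theta_j^{ik}/\partial u_i$ per triangle, treat the regime of uniformly large radii via Theorem \ref{main result1}, and then do asymptotic case analysis when some radii are small. Your derivation of $A_i>0$ from the positive-definiteness of $L^T$ (Lemma \ref{Lemma:Positive-definiteness-lemma}) together with $A_i=\sum_j B_{ij}(\cosh l_{ij}-1)$ and $B_{ij}\ge0$ is a valid alternative to the paper's direct argument that $\partial\theta_j^{ik}/\partial u_i+\partial\theta_k^{ij}/\partial u_i$ cannot vanish. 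The algebraic simplification $T_2=N/\bigl(\sqrt{\Delta}\,(\cosh l_{ij}+1)\bigr)$ via $\sinh^2 l_{ij}=(\cosh l_{ij}-1)(\cosh l_{ij}+1)$ is also correct.

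There is, however, a genuine gap in your Case B. You note that $\cosh l_{ij}+1\ge 2$ is uniformly bounded below, and your subsequent analysis only controls the factors of the \emph{small} radii in $N$ and matches them against terms of $\Delta$. But when exactly one radius, say $r_i$, lies below $R_0$ while $r_j$ (and/or $r_k$) tends to $+\infty$, the numerator is of order $S_iS_j^2S_k$ while $\sqrt{\Delta}$ is only of order $S_jS_k$; replacing $\cosh l_{ij}+1$ by $2$ then yields the estimate $T_2\lesssim S_iS_j$, which is unbounded as $r_j\to\infty$. The bound is only recovered because $\cosh l_{ij}+1\sim C_iC_j$ itself grows like $S_j$, and this factor must be retained, not discarded. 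The paper's proof handles precisely this phenomenon by explicitly dividing the numerator and denominator by appropriate powers of the non-small variables (e.g.\ by $z$ or $y^3$ in Cases 2 and 3). A secondary issue: the heuristic of picking ``a term of matching $S$-degree'' from $\Delta$ is not quite a proof when several radii shrink at different rates; the relative ratios of the small radii matter, and the paper resolves this by renormalizing onto the unit sphere (introducing $\hat x=x/r$, etc.) and analyzing the boundary behavior there. Your sketch is pointing in the right direction but, as written, does not establish boundedness in the mixed small/large regime and does not pin down the multi-rate degeneration.
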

\begin{proof}
    We first prove the lower bound part of the estimates.
    Recall the definitions of $A_i$ and $B_{ij}$
    \begin{equation}
    \label{Proof_definition_A_B}
        \begin{aligned}
            B_{i j} &= \frac{\partial \theta_{i}^{j k}}{\partial u_{j}}+\frac{\partial \theta_{i}^{j l}}{\partial u_{j}}, \\
            A_{i} &= \sum_{v_j \sim v_i}B_{ij}\left(\cosh l_{ij}-1\right) = \sum_{v_j \sim v_i} \left( \frac{\partial \theta_{i}^{j k}}{\partial u_{j}}+\frac{\partial \theta_{i}^{j l}}{\partial u_{j}} \right) (\cosh l_{ij}-1),
        \end{aligned}
    \end{equation}
    where $\triangle_{ijk}$ and $\triangle_{ijl}$ denote the two triangles sharing the edge $e_{ij}$.
    The key expression is
    \begin{equation}
    \label{Equation:uniform_upper_bound_proof_theta_total_repeat_en}
    \frac{\partial{\theta_j^{ik}}}{\partial{u_i}}
    =\frac{C_kS_i^2S_j^2(1-\varphi_{ij}^2) + C_iS_iS_j^2S_k(\varphi_{ik} + \varphi_{ij}\varphi_{jk}) + C_jS_i^2S_jS_k(\varphi_{jk} + \varphi_{ij}\varphi_{ik})}
    {\sinh^2 l_{ij} \sqrt{\Delta}},
    \end{equation}
    where  $C_i = \cosh r_i, S_i = \sinh r_i, A_{ijk} = \sinh{l_{ij}}\sinh{l_{ik}}\sin{\theta_i^{jk}}, \gamma_{ijk} = \cos{\Phi_{jk}} + \cos{\Phi_{ij}}\cos{\Phi_{ik}}$ and
    \begin{equation}
    \label{Equation:uniform_upper_bound_proof_theta_delta_repeat_en}
    \begin{aligned}
    \Delta
    &= (2+2\varphi_{ij}\varphi_{jk}\varphi_{ik})S_i^2S_j^2S_k^2 + (1-\varphi_{ij}^2)S_i^2S_j^2 + (1-\varphi_{jk}^2)S_j^2S_k^2
    (1-\varphi_{ik}^2)S_i^2S_k^2 \\
    &\quad + (2\varphi_{ij} + 2\varphi_{jk}\varphi_{ik})C_iC_jS_iS_jS_k^2 + (2\varphi_{jk} + 2\varphi_{ij}\varphi_{ik})C_jC_kS_jS_kS_i^2
    (2\varphi_{ik} + 2\varphi_{ij}\varphi_{jk})C_iC_kS_iS_kS_j^2.
    \end{aligned}
    \end{equation}
    Therefore, applying the condition $\langle \star \rangle$ and the fact that $l_{ij}, l_{ik}, l_{jk} > 0$ and $\theta_{i}^{jk} \in (0,\pi)$, we have
    \begin{equation}
    \label{Proof_first_geq0}
    	\frac{\partial{\theta_j^{ik}}}{\partial{u_i}} \geq 0,
    \end{equation}
    where the equality holds when $\Phi_{ij}=0$ and $\Phi_{ik}+\Phi_{jk}=\pi$.
    Thus $B_{ij} \geq 0$ and the equality holds when $\Phi_{ij}=0$, $\Phi_{ik}+\Phi_{jk}=\pi, \Phi_{il}+\Phi_{jl}=\pi$.
    By (\ref{Proof_first_geq0}), we get that $\partial{\theta_j^{ik}}/\partial{u_i} + \partial{\theta_k^{ij}/}\partial{u_i} \geq 0$. If the equality holds, then by the conditions for the equality in (\ref{Proof_first_geq0}), we have
    \begin{equation*}
    	\Phi_{ij}=0, \Phi_{ik}+\Phi_{jk}=\pi \ \text{ and } \ \Phi_{ik}=0, \Phi_{ij}+\Phi_{jk}=\pi,
    \end{equation*}
    which implies that $\Phi_{jk} = \pi$. This is a contradiction because the range of weight is $[0,\pi)$. Thus we obtain
    \begin{equation}
    \label{Proof_second_greater_2}
    	\frac{\partial{\theta_j^{ik}}}{\partial{u_i}} + 	\frac{\partial{\theta_k^{ij}}}{\partial{u_i}} > 0.
    \end{equation}
    Using (\ref{Proof_second_greater_2}) and the fact that $A_i \geq \partial{\theta_j^{ik}}/\partial{u_i} (\cosh l_{ij} - 1) + \partial{\theta_k^{ij}/}\partial{u_i} (\cosh l_{ik} - 1)$, we obtain $A_i > 0.$

    Now we prove the upper bound part of the estimates.
    By (\ref{Proof_definition_A_B}),
    it suffices to show that
    $\partial \theta_{j}^{ik}/\partial u_i(\cosh l_{ij}-1)$
    and
    $\partial \theta_{j}^{ik}/\partial u_i$
    have uniform upper bounds.
    Set
    \begin{equation}
    x = \frac{e^{2r_i} - 1}{2},\quad y = \frac{e^{2r_j} - 1}{2},\quad z = \frac{e^{2r_k} - 1}{2}.
    \end{equation}
    Note that $r_i, r_j, r_k \in (0, +\infty)$ corresponds to $x, y, z \in (0, +\infty)$.
    We also use the following notations for some polynomials in the weights
    \begin{equation}
        \label{Equation2_a_b_c_definition}
    \begin{gathered}
    a_1 = 1-\varphi_{ij}^2, \quad a_2 = 1-\varphi_{ik}^2,\quad a_3 = 1-\varphi_{jk}^2, \\
    b_1 = \varphi_{ij}+\varphi_{ik}\varphi_{jk},\quad b_2 = \varphi_{ik}+\varphi_{ij}\varphi_{jk},\quad b_3 = \varphi_{jk}+\varphi_{ij}\varphi_{ik}, \\
    c = 1 + \varphi_{ij}\varphi_{ik}\varphi_{jk}.
    \end{gathered}
    \end{equation}
    By the condition $\langle \star \rangle$, we have $a_n \ge 0$, $b_n \ge 0$ ($n=1,2,3$) and $c > 0$.
    Now we can rewrite the numerator and denominator of (\ref{Equation:uniform_upper_bound_proof_theta_total_repeat_en}) and the term $\cosh l_{ij}-1$ as polynomials in terms of the variables $x$, $y$, and $z$. We first deal with the product of the numerator and $e^{2r_i+2r_j+r_k}$,
    \begin{equation*}
        \begin{aligned}
            & [C_kS_i^2S_j^2(1-\varphi_{ij}^2) + C_iS_iS_j^2S_k(\varphi_{ik} + \varphi_{ij}\varphi_{jk}) + C_jS_i^2S_jS_k(\varphi_{jk} + \varphi_{ij}\varphi_{ik})] e^{2r_i+2r_j+r_k} \\
            &= (1-\varphi_{ij}^2)\frac{e^{2r_k} + 1}{2}(\frac{e^{2r_i} - 1}{2})^2(\frac{e^{2r_j} - 1}{2})^2 + (\varphi_{ik} + \varphi_{ij}\varphi_{jk})\frac{e^{2r_i} + 1}{2}\frac{e^{2r_i} - 1}{2}(\frac{e^{2r_j} - 1}{2})^2\frac{e^{2r_k} - 1}{2} \\
            &\quad + (\varphi_{jk} + \varphi_{ij}\varphi_{ik})\frac{e^{2r_j} + 1}{2}(\frac{e^{2r_i} - 1}{2})^2\frac{e^{2r_j} - 1}{2}\frac{e^{2r_k} - 1}{2} \\
            &= a_1 (z+1) x^2 y^2 + b_2 (x^2+x) y^2 z + b_3 (y^2+y) x^2 z \\
            &= (a_1+b_2+b_3)x^2y^2z + b_2\,xy^2z + b_3\,x^2yz + a_1x^2y^2.
            \end{aligned}
    \end{equation*}
	Define the last line as the following polynomial denote by \(f(x, y, z)\),
    \begin{equation}
    f(x,y,z) := (a_1+b_2+b_3)x^2y^2z + b_2\,xy^2z + b_3\,x^2yz + a_1x^2y^2.
    \end{equation}

    Multiplying the first part of the denominator by $e^{2r_i+2r_j}$, we obtain
    \begin{equation*}
        \begin{aligned}
            &[(C_iC_j + \varphi_{ij}S_iS_j)^2 - 1]e^{2r_i+2r_j} \\
            &= \left( \frac{e^{r_i}+e^{-r_i}}{2}\frac{e^{r_j}+e^{-r_j}}{2} + \varphi_{ij}\frac{e^{r_i}-e^{-r_i}}{2}\frac{e^{r_j}-e^{-r_j}}{2} \right)^2 e^{2r_i+2r_j} - e^{2r_i+2r_j} \\
            &= \left( \frac{e^{2r_i}+1}{2}\frac{e^{2r_j}+1}{2} + \varphi_{ij}\frac{e^{2r_i}-1}{2}\frac{e^{2r_j}-1}{2} \right)^2  - e^{2r_i+2r_j} \\
            &= ((x+1)(y+1) + \varphi_{ij} x y )^2 - (2x+1)(2y+1) \\
            &= (3+4\varphi_{ij}+\varphi_{ij}^2)\,x^2y^2 + 2(1+\varphi_{ij})(x^2y+xy^2) + x^2+y^2+ 2\varphi_{ij}\,xy\,.
            \end{aligned}
    \end{equation*}
    Then we write
    \begin{equation}
    g_1(x,y,z) := (3+4\varphi_{ij}+\varphi_{ij}^2)\,x^2y^2 + 2(1+\varphi_{ij})(x^2y+xy^2) + x^2+y^2+ 2\varphi_{ij}\,xy\,.
    \end{equation}
    Multiplying the term $(\cosh l_{ij}-1)$ by $e^{r_i+r_j}$ gives
\begin{equation*}
        \begin{aligned}
            & (\cosh l_{ij} - 1)e^{r_i+r_j} = \bigl(C_i C_j + \varphi_{ij} S_i S_j - 1\bigr)e^{r_i+r_j} \\
            & = \left( \frac{e^{r_i}+e^{-r_i}}{2}\frac{e^{r_j}+e^{-r_j}}{2} + \varphi_{ij} \frac{e^{r_i}-e^{-r_i}}{2}\frac{e^{r_j}-e^{-r_j}}{2} - 1 \right) e^{r_i+r_j} \\
            & = \left( (x+1)(y+1) + \varphi_{ij}xy \right) - \sqrt{(2x+1)(2y+1)} \\
            & = (1+\varphi_{ij})xy + x + y + 1 - \sqrt{4xy+2x+2y+1}\,.
            \end{aligned}
    \end{equation*}
    We write the polynomial $q(x,y,z)$ defined by
    \begin{equation}
    q(x,y,z) := (1+\varphi_{ij})xy + x + y + 1 - \sqrt{4xy+2x+2y+1}\,.
    \end{equation}
    Finally, we consider the product of $\Delta$ and $e^{2r_i+2r_j+2r_k}$
    \begin{equation*}
        \begin{aligned}
            &\ e^{2r_i+2r_j+2r_k}\Delta
            \\
            &= e^{2r_i+2r_j+2r_k}[(2+2\varphi_{ij}\varphi_{jk}\varphi_{ik})S_i^2S_j^2S_k^2 + (1-\varphi_{ij}^2)S_i^2S_j^2 + (1-\varphi_{jk}^2)S_j^2S_k^2
            + (1-\varphi_{ik}^2)S_i^2S_k^2 \\
            &\quad + (2\varphi_{ij} + 2\varphi_{jk}\varphi_{ik})C_iC_jS_iS_jS_k^2 + (2\varphi_{jk} + 2\varphi_{ij}\varphi_{ik})C_jC_kS_jS_kS_i^2
            + (2\varphi_{ik} + 2\varphi_{ij}\varphi_{jk})C_iC_kS_iS_kS_j^2]
            \\
            & = (2+2\varphi_{ij}\varphi_{jk}\varphi_{ik}) (\frac{e^{2r_i}-1}{2})^2(\frac{e^{2r_j}-1}{2})^2(\frac{e^{2r_k}-1}{2})^2+(1-\varphi_{ij}^2)(\frac{e^{2r_i}-1}{2})^2(\frac{e^{2r_j}-1}{2})^2  e^{2r_k}
            \\
            & \quad + (1-\varphi_{jk}^2)(\frac{e^{2r_j}-1}{2})^2(\frac{e^{2r_k}-1}{2})^2 e^{2r_i} +(1-\varphi_{ik}^2)(\frac{e^{2r_i}-1}{2})^2(\frac{e^{2r_k}-1}{2})^2 e^{2r_j}
            \\
            & \quad + (2\varphi_{ij} + 2\varphi_{jk}\varphi_{ik})(\frac{e^{4r_i}-1}{4})(\frac{e^{4r_j}-1}{4})(\frac{e^{2r_k}-1}{2})^2 + (2\varphi_{jk} + 2\varphi_{ij}\varphi_{ik})(\frac{e^{4r_j}-1}{4})(\frac{e^{4r_k}-1}{4})(\frac{e^{2r_i}-1}{2})^2
            \\
            & \quad + (2\varphi_{ik} + 2\varphi_{ij}\varphi_{jk})(\frac{e^{4r_i}-1}{4})(\frac{e^{4r_k}-1}{4})(\frac{e^{2r_j}-1}{2})^2
            \\
            & = 2c x^2y^2z^2
                +a_1x^2y^2(2z+1) + a_3y^2z^2(2x+1) +a_2x^2z^2(2y+1) + 2b_1(x^2+x)(y^2+y)z^2
                \\
            & \quad + 2b_3(y^2+y)(z^2+z)x^2 + 2b_2(x^2+x)(z^2+z)y^2
            \\
            & = 2(c+b_1+b_2+b_3)x^2y^2z^2 + 2(a_1+b_2+b_3)x^2y^2z + 2(a_3+b_1+b_2)xy^2z^2 + 2(a_2+b_1+b_3)x^2yz^2
            \\
            & \quad + a_1x^2y^2 + a_3y^2z^2 + a_2x^2z^2 + 2b_2xy^2z + 2b_3x^2yz + 2b_1xyz^2,
            % = \sum_{i=0}^{2}\sum_{j=0}^{2}\sum_{k=0}^{2} c_{ijk}\, x^i y^j z^k,
        \end{aligned}
    \end{equation*}
    and set
    \begin{equation}
    \begin{aligned}
    	g_2(x,y,z) &=  2(a_1+b_2+b_3)x^2y^2z + 2(a_3+b_1+b_2)xy^2z^2 + 2(a_2+b_1+b_3)x^2yz^2
    	 \\
    	& \quad + a_1x^2y^2 + a_3y^2z^2 + a_2x^2z^2 + 2b_2xy^2z + 2b_3x^2yz + 2b_1xyz^2
    	\\
    	& \quad + 2(c+b_1+b_2+b_3)x^2y^2z^2.
    \end{aligned}    	
    \end{equation}
    By the preceding discussions, we have that $g_1(x,y,z) > 0$ and  $g_2(x,y,z) > 0$.
    Now we have
    \begin{equation}
        \label{Equation2:expression_partial_theta_partial_u}
        \frac{\partial{\theta_j^{ik}}}{\partial{u_i}} = \frac{f(x,y,z)e^{r_i+r_j}}{g_1(x,y,z)\sqrt{g_2(x,y,z)}} = \frac{f(x,y,z)\sqrt{(2x+1)(2y+1)}}{g_1(x,y,z)\sqrt{g_2(x,y,z)}},
    \end{equation}
    and
    \begin{equation}
        \label{Equation2:expression_partial_theta_partial_u_coshl}
        \frac{\partial \theta_j^{ik}}{\partial u_i}(\cosh l_{ij}-1) = \frac{f(x,y,z) q(x,y,z)}{g_1(x,y,z)\sqrt{g_2(x,y,z)}}.
    \end{equation}

    For convenience, set
    \begin{equation}
        T_1(x,y,z) = \frac{\partial \theta_j^{ik}}{\partial u_i}
        \quad \text{and} \quad
        T_2(x,y,z) = \frac{\partial \theta_j^{ik}}{\partial u_i} (\cosh l_{ij} -1).
    \end{equation}
    Since the numerators and denominators are all elementary functions, $T_1(x,y,z)$ and $T_2(x,y,z)$ are well-defined and continuous on the open set $(0, +\infty)^3$. Let $\Omega_1$ and $\Omega_2$ be two disjoint subsets of the domain $(0, +\infty)^3$ such that they form a partition, i.e., $(0, +\infty)^3 = \Omega_1 \cup \Omega_2$. We define these subsets as follows
    \begin{equation}
    \label{theorem_proof_three_regions}
        \begin{gathered}
            \Omega_1 = \left\{ (x, y, z) \in (0, +\infty)^3 \mid \min(x,y,z) \geq \delta \right\},
            \Omega_2 = \left\{ (x, y, z) \in (0, +\infty)^3 \mid \min(x, y, z) < \delta \right\},
        \end{gathered}
    \end{equation}
    where $\delta > 0$ is a sufficiently small constant whose value is determined by the weights $\Phi_{ij}, \Phi_{ik}, \Phi_{jk}$.

    For $\Omega_1$, using the arguments from the proof of Theorem \ref{Proposition:A_i_and_B_ij_bound_compactness}, we obtain the following bounds
    \begin{equation*}
    	T_1(x,y,z) \leq \frac{5}{128 a^{14}(1+\bar{\varphi})^2\sqrt{1+\bar{\varphi}} }, \  T_2(x,y,z) \leq \frac{5}{64 a^{14}(1+\bar{\varphi})^2\sqrt{1+\bar{\varphi}} },
    \end{equation*}
    where $a = \frac{1}{2}-\frac{1}{2\sqrt{2\delta+1}}>0$ and  $\bar{\varphi} = \min\{\,0, \min\limits_{e_{ij}\in E} \varphi_{ij}\}$.

    For $\Omega_2$, then at least one of the variables $x$, $y$, or $z$ may approach $0$, which can be divided into three cases based on the asymptotic behavior of the variables: (1) all variables approach $0$; (2) two variables approach $0$; (3) one variable approaches $0$.

    \textbf{Case 1: All three variables approach zero}

    Set $r = \sqrt{x^2 + y^2 + z^2}$ and $(\hat{x}, \hat{y}, \hat{z})=(\frac{x}{r}, \frac{y}{r}, \frac{z}{r})$ which satisfies $\hat{x}^{2}+\hat{y}^{2}+\hat{z}^{2}=1$. Since $(x,y,z) \in \mathbb{R}_{>0}^3$, we have $(\hat{x}, \hat{y}, \hat{z}) \in \mathbb{R}_{>0}^3\cap\mathbb{S}^2$. We now express $f, g_1,$ and $g_2$ as polynomials in $r$
    \begin{equation}
            \begin{aligned}
                f(x, y, z) & =  \sum_{k=4}^{5} f_k(\hat{x}, \hat{y}, \hat{z})\, r^k, \
                g_1(x, y, z) = \sum_{k=2}^{4} g_{1,k}(\hat{x}, \hat{y}, \hat{z})\, r^k, \
                g_2(x, y, z)  = \sum_{k=4}^{6} g_{2,k}(\hat{x}, \hat{y}, \hat{z})\, r^k,
            \end{aligned}
        \end{equation}
        where
        \begin{equation}
            \begin{aligned}
                f_4(\hat{x}, \hat{y}, \hat{z}) &= b_2 \hat{x} \hat{y}^2 \hat{z} + b_3 \hat{x}^2 \hat{y} \hat{z} + a_1 \hat{x}^2 \hat{y}^2, \\
                f_5(\hat{x}, \hat{y}, \hat{z}) &= (a_1+b_2+b_3)\hat{x}^2\hat{y}^2\hat{z}, \\
            \end{aligned}
        \end{equation}
        and
        \begin{equation}
            \begin{aligned}
                g_{1,2}(\hat{x}, \hat{y}, \hat{z}) &= \hat{x}^2 + \hat{y}^2 + 2\varphi_{ij} \hat{x} \hat{y}, \\
                g_{1,3}(\hat{x}, \hat{y}, \hat{z}) &= 2(1 + \varphi_{ij})(\hat{x}^2 \hat{y} + \hat{x} \hat{y}^2), \\
                g_{1,4}(\hat{x}, \hat{y}, \hat{z}) &= (3 + 4\varphi_{ij} + \varphi_{ij}^2)\hat{x}^2 \hat{y}^2, \\
            \end{aligned}
        \end{equation}
        and
        \begin{equation}
            \begin{aligned}
                g_{2,4}(\hat{x}, \hat{y}, \hat{z}) &= a_1 \hat{x}^2 \hat{y}^2 + a_3 \hat{y}^2 \hat{z}^2 + a_2 \hat{x}^2 \hat{z}^2 + 2b_2 \hat{x} \hat{y}^2 \hat{z} + 2b_3 \hat{x}^2 \hat{y} \hat{z} + 2b_1 \hat{x} \hat{y} \hat{z}^2, \\
                g_{2,5}(\hat{x}, \hat{y}, \hat{z}) &= 2(a_1 + b_2 + b_3)\hat{x}^2 \hat{y}^2 \hat{z} + 2(a_3 + b_1 + b_2)\hat{x} \hat{y}^2 \hat{z}^2 + 2(a_2 + b_1 + b_3)\hat{x}^2 \hat{y} \hat{z}^2, \\
                g_{2,6}(\hat{x}, \hat{y}, \hat{z}) &= 2(c+b_1+b_2+b_3)\hat{x}^2 \hat{y}^2 \hat{z}^2, \\
            \end{aligned}
        \end{equation}
	If $a_1 = b_2 = b_3 = 0$, we have $f \equiv 0$. Hence there exists a constant $\delta > 0$ such that $(x,y,z) \in B((0,0,0), \delta)$, $T_1(x,y,z) = 0, T_2(x,y,z) = 0$.

    If $a_1 + b_2 + b_3 \neq 0$, dividing both the numerator and the denominator of $T_1(x,y,z)$ by $r^4$ gives
        \begin{equation*}
            \begin{aligned}
                T_1(x,y,z) &= \frac{f(x, y, z) \sqrt{(1+2x)(1+2y)}}{g_1(x, y, z)\sqrt{g_2(x, y, z)}}
                \\
                & = \frac{(f_4(\hat{x}, \hat{y}, \hat{z})+ f_5(\hat{x}, \hat{y}, \hat{z}) r) \sqrt{(1+2\hat{x}r)(1+2\hat{y}r)}}{(g_{1,2}(\hat{x}, \hat{y}, \hat{z}) + \sum_{k=3}^4 g_{1,k}(\hat{x}, \hat{y}, \hat{z})\, r^{k-2})\sqrt{g_{2,4}(\hat{x}, \hat{y}, \hat{z}) + g_{2,5}(\hat{x}, \hat{y}, \hat{z}) r}}.
            \end{aligned}
        \end{equation*}
    Hence we have
    \begin{equation*}
        \begin{aligned}
            T_1(x,y,z) &\leq \frac{(f_4(\hat{x}, \hat{y}, \hat{z})+ f_5(\hat{x}, \hat{y}, \hat{z}) r) \sqrt{(1+2\hat{x}r)(1+2\hat{y}r)}}{(g_{1,2}(\hat{x}, \hat{y}, \hat{z}) )\sqrt{g_{2,4}(\hat{x}, \hat{y}, \hat{z})}}
            \\
            &\leq \frac{f_4(\hat{x}, \hat{y}, \hat{z})(1+r)\sqrt{(1+2\hat{x}r)(1+2\hat{y}r)}}{(g_{1,2}(\hat{x}, \hat{y}, \hat{z}) )\sqrt{g_{2,4}(\hat{x}, \hat{y}, \hat{z})}}.
        \end{aligned}
    \end{equation*}
    In the first line, we used $g_{1,3}, g_{1,4}, g_{2,5} \geq 0$,
    and in the second line,
    the fact that
    $\hat{x}^2 \hat{y}^2 \hat{z} \leq \hat{x} \hat{y}^2 \hat{z}$,
    $\hat{x}^2 \hat{y}^2 \hat{z} \leq \hat{x}^2  \hat{y} \hat{z} $
    and $\hat{x}^2 \hat{y}^2 \hat{z} \leq \hat{x}^2  \hat{y}^2 $
    which are consequences of  $\hat{x}, \hat{y}, \hat{z} \in (0,1)$.
    Hence for a samll enough $\epsilon$ there exists a small enough constant $\delta_0(\epsilon)$ such that $r\leq \delta_0$, it follows that
    \begin{equation}
    	T_1(x,y,z) \leq \frac{f_4(\hat{x}, \hat{y}, \hat{z})}{g_{1,2}(\hat{x}, \hat{y}, \hat{z}) \sqrt{g_{2,4}(\hat{x}, \hat{y}, \hat{z})}} + \epsilon.
    \end{equation}
    It remains to prove  $\frac{f_4(\hat{x}, \hat{y}, \hat{z})}{g_{1,2}(\hat{x}, \hat{y}, \hat{z}) \sqrt{g_{2,4}(\hat{x}, \hat{y}, \hat{z})}}$ is bounded. We now employ the notation
        \begin{equation}
            \begin{aligned}
                s(\hat{x}, \hat{y}, \hat{z}) &= \frac{f_4(\hat{x}, \hat{y}, \hat{z})}{g_{1,2}(\hat{x}, \hat{y}, \hat{z}) \sqrt{g_{2,4}(\hat{x}, \hat{y}, \hat{z})}}
                \\
                &= \frac{b_2 \hat{x} \hat{y}^2 \hat{z} + b_3 \hat{x}^2 \hat{y} \hat{z} + a_1 \hat{x}^2 \hat{y}^2}{(\hat{x}^2 + \hat{y}^2 + 2\varphi_{ij} \hat{x} \hat{y}) \sqrt{a_1 \hat{x}^2 \hat{y}^2 + a_3 \hat{y}^2 \hat{z}^2 + a_2 \hat{x}^2 \hat{z}^2 + 2b_2 \hat{x} \hat{y}^2 \hat{z} + 2b_3 \hat{x}^2 \hat{y} \hat{z} + 2b_1 \hat{x} \hat{y} \hat{z}^2}}.
            \end{aligned}
        \end{equation}
    If $a_3 \neq 0, a_2 \neq 0$, we obtain
    \begin{equation*}
    \begin{aligned}
    s(\hat{x}, \hat{y}, \hat{z}) &\leq \frac{b_2  \hat{x} \hat{y}^2 \hat{z}}{(2+2\varphi_{ij}) xy \sqrt{a_3 y^2 z^2 }} + \frac{b_3 \hat{x}^2 \hat{y} \hat{z}}{(2+2\varphi_{ij}) xy \sqrt{a_2 \hat{x}^2 \hat{z}^2 }} + \frac{a_1 \hat{x}^2 \hat{y}^2}{(2+2\varphi_{ij})xy \sqrt{a_1 \hat{x}^2 \hat{y}^2}}
        	\\
    &\leq \frac{b_2}{(2+2\varphi_{ij})\sqrt{a_3}} + \frac{b_3}{(2+2\varphi_{ij})\sqrt{a_2}} + \frac{a_1}{(2+2\varphi_{ij})\sqrt{a_1}}.
    \end{aligned}  	
   	\end{equation*}
   	If $a_3 = 0, a_2 \neq 0$, we have $b_3 = \varphi_{jk} + \varphi_{ij} \varphi_{ik} = 1 + \varphi_{ij} \varphi_{ik} > 0$ and $b_2 = b_1 = \varphi_{ij} + \varphi_{ik}$. Hence we obtain
   	\begin{equation*}
   	\begin{aligned}
   	   		s(\hat{x}, \hat{y}, \hat{z}) &\leq \frac{b_2 \hat{x} \hat{y}^2 \hat{z} + b_3 \hat{x}^2 \hat{y} \hat{z} + a_1 \hat{x}^2 \hat{y}^2}{(\hat{x}^2 + \hat{y}^2 + 2\varphi_{ij} \hat{x} \hat{y}) \sqrt{a_1 \hat{x}^2 \hat{y}^2 + a_2 \hat{x}^2 \hat{z}^2 + 2b_3 \hat{x}^2 \hat{y} \hat{z} }}
   	   		\\
   	   		& = \frac{b_2  \hat{y}^2 \hat{z} + b_3 \hat{x} \hat{y} \hat{z} + a_1 \hat{x} \hat{y}^2}{(\hat{x}^2 + \hat{y}^2 + 2\varphi_{ij} \hat{x} \hat{y}) \sqrt{a_1 \hat{y}^2 + a_2  \hat{z}^2 + 2b_3  \hat{y} \hat{z} }}.
   	\end{aligned}
   	\end{equation*}
       The denominator of the expression vanishes if either $(\hat{x}, \hat{y}, \hat{z}) \to (0,0,1)$ or $(\hat{x}, \hat{y}, \hat{z}) \to (1,0,0)$. Consider the first case. As $(\hat{x}, \hat{y}, \hat{z}) \to (0,0,1)$ along the specific path defined by $\hat{y} = k\hat{x}$, where $k \in (0,+\infty)$ is an arbitrary constant, we have
   	\begin{equation*}
   	\label{equation:proof_s_hatx_haty_hatz_1}
   	\begin{aligned}
   	   		s(\hat{x}, \hat{y}, \hat{z}) &\leq  \frac{b_2  k^2 \hat{x}^2 \hat{z} + b_3 k\hat{x}^2  \hat{z} + a_1 k^2 \hat{x}^3}{(\hat{x}^2 + k^2\hat{x}^2 + 2\varphi_{ij}k \hat{x}^2) \sqrt{a_1 k^2\hat{x}^2 + a_2  \hat{z}^2 + 2b_3  k\hat{x} \hat{z} }}
   	   		\\
   	   		&\leq \frac{b_2  k^2 \hat{z}  + b_3 k \hat{z} + a_1 k^2 \hat{x}}{(1 + k^2 + 2\varphi_{ij}k ) \sqrt{ a_2  \hat{z}^2 }} .
   	\end{aligned}
   	\end{equation*}
   	Since $\hat{x} \to 0, \hat{z} \to 1$, we obtain that if $k \to 0$, the right term tends to $0$, which implies $s(\hat{x}, \hat{y}, \hat{z}) \leq \epsilon$ for a samll enough constant $\epsilon$.
    If $k \to +\infty$, $s(\hat{x}, \hat{y}, \hat{z}) \leq \frac{b_2}{\sqrt{a_2}}$.
    Hence, we obtain that for any $k$, there exists a constant $c_0>0$ such that $s(\hat{x}, \hat{y}, \hat{z}) \leq c_0$ along the fixed path $\hat{y} = k \hat{x}$ to $(0,0,1)$, which implies that along any path to $(0,0,1)$ we have $s(\hat{x}, \hat{y}, \hat{z}) \leq c_{0}$. Hence there exists $\delta_0> 0$  such that if $(\hat{x}, \hat{y}, \hat{z}) \in B((0,0,1), \delta_0)$, we have $s(\hat{x}, \hat{y}, \hat{z}) \leq c_0$.
   	
   	For $(\hat{x}, \hat{y}, \hat{z}) \to (1,0,0)$, using analogous methods we show that there exists a constant $c_1>0$ and a correspending constant $\delta_1 > 0$ such that if $(\hat{x}, \hat{y}, \hat{z}) \in B((0,0,1), \delta_1)$, we have  $s(\hat{x}, \hat{y}, \hat{z}) \leq c_1$.
   	
   	For the compact set $(\hat{x}, \hat{y}, \hat{z}) \in \{\hat{x}^2 + \hat{y}^2 + \hat{z}^2 = 1\} / ( B((0,0,1), \delta_0) \cup B((1,0,0), \delta_1))$, since the denominator is not zero , using the continuity of the numerator and the denominator, we obtain that there exists a constant $c_2>0$ such that $s(\hat{x}, \hat{y}, \hat{z}) \leq c_2$.
   	
   	Set $C = \max\{c_0, c_1, c_2\}>0$, and we have
   	\begin{equation*}
   		s(\hat{x}, \hat{y}, \hat{z}) \leq C, \text{ if } a_3 = 0, a_2 \neq 0.
   	\end{equation*}
   	
   	The case where $a_2 = 0, a_3 \neq 0$ is analogous. 	
    We now consider $a_2 = a_3 =0$, which  implies that $b_2 > 0, b_3 > 0$.  Since $b_1 = b_3 = b_2$ in this case, we have  $b_1 > 0$. Hence we obtain
    \begin{equation*}
    \begin{aligned}
        	s(\hat{x}, \hat{y}, \hat{z}) &\leq \frac{b_2 \hat{x} \hat{y}^2 \hat{z} + b_3 \hat{x}^2 \hat{y} \hat{z} + a_1 \hat{x}^2 \hat{y}^2}{(\hat{x}^2 + \hat{y}^2 + 2\varphi_{ij} \hat{x} \hat{y}) \sqrt{a_1 \hat{x}^2 \hat{y}^2 + 2b_2 \hat{x} \hat{y}^2 \hat{z} + 2b_3 \hat{x}^2 \hat{y} \hat{z} + 2b_1 \hat{x} \hat{y} \hat{z}^2}} \\
        	&\leq \frac{b_2 \hat{x}^{\frac{1}{2}} \hat{y}^\frac{3}{2} \hat{z} + b_3 \hat{x}^\frac{3}{2} \hat{y}^\frac{1}{2} \hat{z} + a_1 \hat{x}^\frac{3}{2} \hat{y}^\frac{3}{2}}{(\hat{x}^2 + \hat{y}^2 + 2\varphi_{ij} \hat{x} \hat{y}) \sqrt{a_1 \hat{x} \hat{y} + 2b_2  \hat{y} \hat{z} + 2b_3 \hat{x}  \hat{z} + 2b_1 \hat{z}^2}}.
    \end{aligned}
    \end{equation*}
    The denominator tends to zero only if $(\hat{x}, \hat{y}, \hat{z}) \to (0,0,1)$. Along the specific path $\hat{y} = k\hat{x}$, for any constant $k \in (0,+\infty)$, we obtain
    \begin{equation*}
    	s(\hat{x}, \hat{y}, \hat{z}) \leq \frac{b_2 k^{\frac{3}{2}}  \hat{z} + b_3 k^{\frac{1}{2}}  \hat{z} + a_1 k^{\frac{3}{2}} \hat{x} }{(1+ k^2 + 2\varphi_{ij} k)  \sqrt{a_1 \hat{x} \hat{y} + 2b_2  \hat{y} \hat{z} + 2b_3 \hat{x}  \hat{z} + 2b_1 \hat{z}^2}}.
    \end{equation*}
    Since $(\hat{x}, \hat{y}, \hat{z}) \to (0,0,1)$, we obtain that if $k \to 0$, the right term of the above equality tends to $0$, which implies that $s(\hat{x}, \hat{y}, \hat{z}) \leq 1$.
    If $k \to +\infty$, the right term of the above equality tends to $0$ which implies that $s(\hat{x}, \hat{y}, \hat{z}) \leq 1$.
    Hence, using analogous methods we obtain that there exists $\delta_0> 0$  such that if $(\hat{x}, \hat{y}, \hat{z}) \in B((0,0,1), \delta_0)$, we have $s(\hat{x}, \hat{y}, \hat{z}) \leq c_3$.

    For the compact set $(\hat{x}, \hat{y}, \hat{z}) \in \mathbb{R}_{\geq 0}^3 \cap \{\hat{x}^2 + \hat{y}^2 + \hat{z}^2 = 1\} / ( B((0,0,1), \delta_0))$, since the denominator is not zero, using the continuity of the numerator and the denominator, we obtain that there exists a constant $c_4$ such that $s(\hat{x}, \hat{y}, \hat{z}) \leq c_4$.

    Let $C = \max\{c_3, c_4\}$, we have
    \begin{equation*}
        s(\hat{x}, \hat{y}, \hat{z}) \leq C, \text{ if } a_2 = a_3 = 0.
    \end{equation*}

    Summarizing the preceding discussion of the various cases, we obtain the following
    \begin{equation}
        \label{equation:proof_delta_0_estimate}
        \exists C > 0, \exists \delta_0 > 0, \forall (x,y,z) \in B((0,0,0), \delta_0), T_1(x,y,z) \leq C.
    \end{equation}
    Since $T_2(x,y,z) = T_1(x,y,z) (\cosh l_{ij} -1)$, using the relation of $x,y,z$ of $r_i, r_j, r_k$, we get that if $(x,y,z) \in B((0,0,0), \delta_0)$ for some small $\delta_0$, then $\cosh l_{ij} -1 = C_i C_j + \varphi_{ij} S_i S_j - 1 \leq 1$. Hence we also have $T_2(x,y,z) \leq C$.

    \textbf{Case 2: Two variables approach zero}

         When exactly two variables vanish, three distinct sub-cases arise, corresponding to whether \(x = 0\), \(y = 0\), or \(z = 0\).
         First consider the sub-case where \((x,y,z) \in \mathbb{R}^3_{>0}\) approaches
          \((0, 0, \bar{z}) \in D\), with any \(\bar{z}\geq \delta_0\) where $\delta_0$ is choosed in (\ref{equation:proof_delta_0_estimate}).
        \begin{itemize}
            \item $ x \to 0, y \to 0, z \to \bar{z}\geq \delta_0 $
        \end{itemize}
        Let $\rho_{xy} = \sqrt{x^2 + y^2}$ and $x = \rho_{xy} \hat{x} , y= \rho_{xy} \hat{y}$. Since the region is in $\mathbb{R}_{>0}^3$, we have $0 < \hat{x} <1$ and $0 < \hat{y} <1$. We express both the numerator and the denominator as polynomials in the variable $\rho_{xy}$
        \begin{equation}
        \begin{aligned}
            f(x, y, z) &= \sum_{k=3}^{4} f_k(\hat{x}, \hat{y}, z)\, \rho_{xy}^k, \quad
            g_1(x, y, z) = \sum_{k=2}^{4} g_{1,k}(\hat{x}, \hat{y})\, \rho_{xy}^k, \quad
            g_2(x, y, z) = \sum_{k=2}^{4} g_{2,k}(\hat{x}, \hat{y}, z)\, \rho_{xy}^k,
        \end{aligned}
        \end{equation}
        where
        \begin{equation}
            \begin{aligned}
                f_3(\hat{x}, \hat{y}, z) &= b_2 \hat{x} \hat{y}^2 z + b_3 \hat{x}^2 \hat{y} z, \\
                f_4(\hat{x}, \hat{y}, z) &= a_1 \hat{x}^2 \hat{y}^2 + (a_1 + b_2 + b_3) \hat{x}^2 \hat{y}^2 z,
        	 \end{aligned}
        \end{equation}
        and
        \begin{equation}
            \begin{aligned}
                g_{1,2}(\hat{x}, \hat{y}) &= \hat{x}^2 + \hat{y}^2 + 2\varphi_{ij} \hat{x} \hat{y} = 1 + 2\varphi_{ij} \hat{x} \hat{y}, \\
                g_{1,3}(\hat{x}, \hat{y}) &= 2(1 + \varphi_{ij})(\hat{x}^2 \hat{y} + \hat{x} \hat{y}^2), \\
                g_{1,4}(\hat{x}, \hat{y}) &= (3 + 4\varphi_{ij} + \varphi_{ij}^2)\hat{x}^2 \hat{y}^2,
            \end{aligned}
        \end{equation}
        and
        \begin{equation}
                \begin{aligned}
                g_{2,2}(\hat{x}, \hat{y}, z) &= a_3 \hat{y}^2 z^2 + a_2 \hat{x}^2 z^2 + 2b_1 \hat{x} \hat{y} z^2, \\
                g_{2,3}(\hat{x}, \hat{y}, z) &= 2(a_3 + b_1 + b_2)\hat{x} \hat{y}^2 z^2 + 2(a_2 + b_1 + b_3)\hat{x}^2 \hat{y} z^2 + 2b_2 \hat{x} \hat{y}^2 z + 2b_3 \hat{x}^2 \hat{y} z, \\
                g_{2,4}(\hat{x}, \hat{y}, z) &= a_1 \hat{x}^2 \hat{y}^2 + 2(a_1 + b_2 + b_3)\hat{x}^2 \hat{y}^2 z + 2(c + b_1 + b_2 + b_3)\hat{x}^2 \hat{y}^2 z^2.
            \end{aligned}
        \end{equation}
        The condition $\langle \star \rangle$ implies $a_3 + a_2 + b_1 = 1 - \varphi_{jk}^2 + 1 - \varphi_{ik}^2 + \varphi_{ij} + \varphi_{ik}\varphi_{jk} \geq 0$. The equality holds if and only if $\varphi_{jk} = 1, \varphi_{ik} = 1, \varphi_{ij} + \varphi_{ik} \varphi_{jk} = 0$. However,this is impossible considering the range of $\varphi$. Consequently, $a_3 + a_2 + b_1 > 0$, and as a result, $g_{2,2} > 0$. Noting that $\hat{x}^2 + \hat{y}^2 = 1$ and $-1 < \varphi_{ij} \leq 1$, it is clear that $1+2\varphi_{ij} \hat{x} \hat{y} > 0$, which implies that $g_{1,2} > 0$.

        If $b_2 = b_3 = 0$, then $f_3 \equiv 0$.  By dividing both the numerator and denominator of $T_1(x,y,z)$ by the term $\rho_{xy}^{3}$, the expression becomes
        \begin{equation*}
        \begin{aligned}
            T_1(x,y,z) &= \frac{f_4(\hat{x}, \hat{y}, z) \rho_{xy}^{4} \sqrt{(1 + 2\rho_{xy} \hat{x})(1+2\rho_{xy} \hat{y})}}{( \sum_{k=2}^{4} g_{1,k}(\hat{x}, \hat{y})\, \rho_{xy}^k) \sqrt{ \sum_{k=2}^{4} g_{2,k}(\hat{x}, \hat{y}, z)\, \rho_{xy}^k}}
            \\
            &= \frac{f_4(\hat{x}, \hat{y}, z) \rho_{xy} \sqrt{(1 + 2\rho_{xy} \hat{x})(1+2\rho_{xy} \hat{y})}}{( \sum_{k=2}^{4} g_{1,k}(\hat{x}, \hat{y})\, \rho_{xy}^{k-2}) \sqrt{ \sum_{k=2}^{4} g_{2,k}(\hat{x}, \hat{y}, z)\, \rho_{xy}^{k-2}}}.
        \end{aligned}
        \end{equation*}
        However, $f_4(\hat{x}, \hat{y}, z)$ tends to infinity as $z \to +\infty$. Therefore, by dividing both the numerator and the denominator by $z$, we obtain
        \begin{equation*}
            \begin{aligned}
                T_1(x,y,z) &= \frac{\frac{f_4(\hat{x}, \hat{y}, z) }{z}\rho_{xy} \sqrt{(1 + 2\rho_{xy} \hat{x})(1+2\rho_{xy} \hat{y})}}{( \sum_{k=2}^{4} g_{1,k}(\hat{x}, \hat{y})\, \rho_{xy}^{k-2}) \sqrt{ \frac{\sum_{k=2}^{4} g_{2,k}(\hat{x}, \hat{y}, z)\, \rho_{xy}^{k-2}}{z^2}}}.
            \end{aligned}
        \end{equation*}
        Since $\bar{z} \geq \delta_0$, we have $0 < \frac{1}{\bar{z}} \leq \frac{1}{\delta_0}$. Considering this and the continuity of the numerator and the denominator, it follows that for any $\epsilon > 0$, there exists a constant $\delta_1 > 0$ such that if $(x,y,z) \in B((0,0,\bar{z}), \delta_1)$ for any $\bar{z} \geq \delta_0$, we have
        \begin{equation*}
            T_1(x,y,z) = \frac{\frac{f_4(\hat{x}, \hat{y}, z) }{z}\rho_{xy} \sqrt{(1 + 2\rho_{xy} \hat{x})(1+2\rho_{xy} \hat{y})}}{( \sum_{k=2}^{4} g_{1,k}(\hat{x}, \hat{y})\, \rho_{xy}^{k-2}) \sqrt{ \frac{\sum_{k=2}^{4} g_{2,k}(\hat{x}, \hat{y}, z)\, \rho_{xy}^{k-2}}{z^2}}} \leq \epsilon.
        \end{equation*}

        When $b_2 \ne 0$ or $b_3 \ne 0$, the expression $T_1(x,y,z)$ can be rewritten as follows by dividing both its numerator and denominator by $\rho_{xy}^{3}$
        \begin{equation*}
            \begin{aligned}
                T_1(x,y,z) &= \frac{(\sum_{k=3}^{4} f_k(\hat{x}, \hat{y}, z)\, \rho_{xy}^k) \sqrt{(1 + 2\rho_{xy} \hat{x})(1+2\rho_{xy} \hat{y})}}{( \sum_{k=2}^{4} g_{1,k}(\hat{x}, \hat{y})\, \rho_{xy}^k) \sqrt{ \sum_{k=2}^{4} g_{2,k}(\hat{x}, \hat{y}, z)\, \rho_{xy}^k}}
                \\
                &= \frac{(\sum_{k=3}^{4} f_k(\hat{x}, \hat{y}, z)\, \rho_{xy}^{k-3}) \sqrt{(1 + 2\rho_{xy} \hat{x})(1+2\rho_{xy} \hat{y})}}{( \sum_{k=2}^{4} g_{1,k}(\hat{x}, \hat{y})\, \rho_{xy}^{k-2}) \sqrt{ \sum_{k=2}^{4} g_{2,k}(\hat{x}, \hat{y}, z)\, \rho_{xy}^{k-2}}}.
            \end{aligned}
        \end{equation*}
        The numerator and denominator both include zeroth-order terms in $\rho_{xy}$, which implies that $T_1(x,y,z)$ may not converge.
        Analogously, to ensure the boundedness of both the numerator and denominator, we divide them by $z$, which gives
        \begin{equation*}
            T_1(x,y,z) = \frac{\frac{(\sum_{k=3}^{4} f_k(\hat{x}, \hat{y}, z)\, \rho_{xy}^{k-3})}{z} \sqrt{(1 + 2\rho_{xy} \hat{x})(1+2\rho_{xy} \hat{y})}}{( \sum_{k=2}^{4} g_{1,k}(\hat{x}, \hat{y})\, \rho_{xy}^{k-2}) \sqrt{ \frac{\sum_{k=2}^{4} g_{2,k}(\hat{x}, \hat{y}, z)\, \rho_{xy}^{k-2}}{z^2}}}.
        \end{equation*}
        Using $0 < \frac{1}{\bar{z}} \leq \frac{1}{\delta_0}$ and the continuity of the numerator and the denominator, it follows that for any $\epsilon > 0$, there exists a constant $\delta_1 > 0$ such that if $(x,y,z) \in B((0,0,\bar{z}), \delta_1)$ for any $\bar{z} \geq \delta_0$, we have
        \begin{equation*}
            T_1(x,y,z) \leq \frac{\frac{f_3(\hat{x}, \hat{y}, \bar{z})}{z}}{g_{1,2}(\hat{x}, \hat{y}) \sqrt{\frac{g_{2,2}(\hat{x}, \hat{y}, \bar{z})}{z^2}}} +  \epsilon = \frac{b_2 \hat{x} \hat{y}^2  + b_3 \hat{x}^2 \hat{y} }{(1 + 2\varphi_{ij} \hat{x} \hat{y}) \sqrt{a_3 \hat{y}^2 + a_2 \hat{x}^2 + 2b_1 \hat{x} \hat{y}}} +  \epsilon.
        \end{equation*}
        It remains to demonstrate the boundedness of the right-hand side of the above inequality. We now employ the notation
        \begin{equation*}
            s(\hat{x}, \hat{y}) = \frac{b_2 \hat{x} \hat{y}^2  + b_3 \hat{x}^2 \hat{y} }{(1 + 2\varphi_{ij} \hat{x} \hat{y}) \sqrt{a_3 \hat{y}^2 + a_2 \hat{x}^2 + 2b_1 \hat{x} \hat{y}}}.
        \end{equation*}

        If $a_3 \neq 0$ and $a_2 \neq 0$, we are concerned with $(\hat{x}, \hat{y})$ such that $\hat{x}^2 + \hat{y}^2 = 1$ and $0 < \hat{x}, \hat{y} \leq 1$. Consider now the slightly enlarged compact domain where $\hat{x}^2 + \hat{y}^2 = 1$ and $0 \leq \hat{x}, \hat{y} \leq 1$.  In this domain, the numerator $b_2 \hat{x} \hat{y}^2 + b_3 \hat{x}^2 \hat{y}$ is continuous and thus achieves a maximum value. The denominator $(1 + 2\varphi_{ij} \hat{x} \hat{y}) \sqrt{a_3 \hat{y}^2 + a_2 \hat{x}^2 + 2b_1 \hat{x} \hat{y}}$, is also continuous and attains a minimum which is strictly positive. Therefore, we can find a constant $C_1 > 0$ such that $s(\hat{x}, \hat{y}) \leq C_1$ for any unit vector $(\hat{x},\hat{y})$.

        If $a_3 = 0$ and $a_2 \neq 0$, we divide $s(\hat{x}, \hat{y})$ by $x^{\frac{1}{2}}$ as follows
        \begin{equation*}
            s(\hat{x}, \hat{y}) = \frac{b_2 \hat{x}^{\frac{1}{2}} \hat{y}^{2} + b_3 \hat{x}^\frac{3}{2} \hat{y}}{(1+\varphi_{ij}\hat{x}\hat{y}) \sqrt{a_2 \hat{x} + 2b_1 \hat{y}}}.
        \end{equation*}
        Hence we obtain
        \begin{equation*}
            \begin{aligned}
                s(\hat{x}, \hat{y}) &\leq \frac{b_2 \hat{x}^{\frac{1}{2}} \hat{y}^{2} + b_3 \hat{x}^\frac{3}{2} \hat{y}}{(1+\varphi_{ij}\hat{x}\hat{y}) \sqrt{a_2 \hat{x}}}
                \\
                & \leq \frac{b_2 \hat{y}^{2} + b_3 \hat{x} \hat{y}}{(1+\varphi_{ij}\hat{x}\hat{y}) \sqrt{a_2}}
                \\
                & \leq \frac{b_2 + b_3 }{(1+\bar{\varphi})\sqrt{a_2}}.
            \end{aligned}
        \end{equation*}
        The case where $a_3 \neq 0$ and $a_2 = 0$ is analogous. \par
        If $a_3 = 0$ and $a_2 = 0$, in this case, $b_1 = b_2 = b_3 = \varphi_{ij} + 1$. Hence we obtain
        \begin{equation*}
            \begin{aligned}
                s(\hat{x}, \hat{y}) &= \frac{b_2 \hat{x}^{\frac{1}{2}} \hat{y}^{\frac{3}{2}} + b_3 \hat{x}^{\frac{3}{2}} \hat{y}^{\frac{1}{2}}}{(1+\varphi_{ij} \hat{x} \hat{y}) \sqrt{2b_1}}
                \\
                & \leq \frac{b_2 + b_3 }{(1+\bar{\varphi}) \sqrt{2b_1}}.
            \end{aligned}
        \end{equation*}
        Summarizing the preceding discussion of the various cases, we have
    \begin{equation}
        \label{equation:proof_delta_1_estimate}
        \exists C > 0, \exists \delta_1 > 0, \forall \bar{z} \geq \delta_0, \forall (x,y,z) \in B((0,0,\bar{z}), \delta_1), T_1(x,y,z) \leq C.
    \end{equation}
    Since $T_2(x,y,z) = T_1(x,y,z) (\cosh l_{ij} -1)$, using the relation of $x,y,z$ of $r_i, r_j, r_k$, we obtain that if $(x,y,z) \in B((0,0,\bar{z}), \delta_1)$ for some small $\delta_0$ and any $\bar{z} \geq \delta_0$, then $T_2(x,y,z) \leq C$.
        \begin{itemize}
            \item $ x \to 0, z \to 0, y \to \bar{y} \geq \delta_0 $
        \end{itemize}
        Let $\rho_{xz} = \sqrt{x^2 + z^2}$ and $x = \rho_{xz} \hat{x} , z= \rho_{xz} \hat{z}$. Since the region is in $\mathbb{R}_{>0}^3\cap\mathbb{S}^2$, we have $0 < \hat{x} <1$ and $0 < \hat{z} <1$. We express both the numerator and the denominator as polynomials in the variable $\rho_{xz}$
        \begin{equation}
        \begin{aligned}
            f(x, y, z) &= \sum_{k=2}^{3} f_k(\hat{x}, y, \hat{z})\, \rho_{xz}^k, \quad
            g_1(x, y, z) = \sum_{k=0}^{2} g_{1,k}(\hat{x}, y, \hat{z})\, \rho_{xz}^k, \quad
            g_2(x, y, z) = \sum_{k=2}^{4} g_{2,k}(\hat{x}, y, \hat{z})\, \rho_{xz}^k,
        \end{aligned}
        \end{equation}
        where
        \begin{equation}
            \begin{aligned}
                f_2(\hat{x}, y, \hat{z}) &= a_1 \hat{x}^2 y^2 + b_2 \hat{x} y^2 \hat{z}, \\
                f_3(\hat{x}, y, \hat{z}) &= (a_1 + b_2 + b_3)\hat{x}^2 y^2 \hat{z} + b_3 \hat{x}^2 y \hat{z}, \\
            \end{aligned}
        \end{equation}
        and
        \begin{equation}
            \begin{aligned}
                g_{1,0}(\hat{x}, y, \hat{z}) &= y^2, \\
                g_{1,1}(\hat{x}, y, \hat{z}) &= 2(1+\varphi_{ij})\hat{x}y^2 + 2\varphi_{ij}\hat{x}y, \\
                g_{1,2}(\hat{x}, y, \hat{z}) &= (3 + 4\varphi_{ij} + \varphi_{ij}^2)\hat{x}^2 y^2 + 2(1 + \varphi_{ij})\hat{x}^2 y + \hat{x}^2, \\
            \end{aligned}
        \end{equation}
        and
        \begin{equation}
            \begin{aligned}
                g_{2,2}(\hat{x}, y, \hat{z}) &= a_1\hat{x}^2y^2 + a_3y^2\hat{z}^2 + 2b_2\hat{x}y^2\hat{z}, \\
                g_{2,3}(\hat{x}, y, \hat{z}) &= 2(a_1+b_2+b_3)\hat{x}^2y^2\hat{z} + 2(a_3+b_1+b_2)\hat{x}y^2\hat{z}^2 + 2b_3\hat{x}^2y\hat{z} + 2b_1\hat{x}y\hat{z}^2, \\
                g_{2,4}(\hat{x}, y, \hat{z}) &= 2(a_2+b_1+b_3)\hat{x}^2y\hat{z}^2 + a_2\hat{x}^2\hat{z}^2 + 2(c+b_1+b_2+b_3)\hat{x}^2y^2\hat{z}^2.
            \end{aligned}
        \end{equation}
        Since $a_1 + a_3 + b_2 = 1 - \varphi_{ij}^2 + 1 - \varphi_{jk}^2 + \varphi_{ik} + \varphi_{ij} \varphi_{jk} > 0$, we deduce $g_{2,2} > 0$.  By dividing the numerator and denominator of $T_1(x,y,z)$ by $\rho_{xz}$, we obtain
        \begin{equation*}
            \begin{aligned}
                T_1(x,y,z) = \frac{(\sum_{k=2}^{3} f_k(\hat{x}, y, \hat{z})\, \rho_{xz}^{k-1}) \sqrt{(1 + 2\rho_{xz} \hat{x})(1+2y)}}{( \sum_{k=0}^{2} g_{1,k}(\hat{x}, y, \hat{z})\, \rho_{xz}^{k}) \sqrt{ \sum_{k=2}^{4} g_{2,k}(\hat{x}, y, \hat{z})\, \rho_{xz}^{k-2}}}.
            \end{aligned}
        \end{equation*}
        By dividing the numerator and the denominator by $y^3$, we obtain
        \begin{equation*}
            \begin{aligned}
                T_1(x,y,z) &= \frac{\frac{(\sum_{k=2}^{3} f_k(\hat{x}, y, \hat{z})\, \rho_{xz}^{k-1})}{y^2} \sqrt{(1 + 2\rho_{xz} \hat{x})(\frac{1}{y^2}+2\frac{1}{y})} }{\frac{( \sum_{k=0}^{2} g_{1,k}(\hat{x}, y, \hat{z})\, \rho_{xz}^{k})}{y^2} \sqrt{ \frac{\sum_{k=2}^{4} g_{2,k}(\hat{x}, y, \hat{z})\, \rho_{xz}^{k-2}}{y^2}}}
                \\
                &= \frac{((a_1\hat{x}^2 + b_2 \hat{x} \hat{z}) \rho_{xz} + \frac{f_3 (\hat{x}, y, \hat{z} )}{y^2}\rho_{xz}^2)\sqrt{(1 + 2\rho_{xz} \hat{x})(\frac{1}{y^2}+2\frac{1}{y})}}{(1 + \frac{\sum_{k=1}^{2} g_{1,k}(\hat{x}, y, \hat{z})\, \rho_{xz}^{k}}{y^2} ) \sqrt{a_1 \hat{x}^2 + a_3 \hat{z}^2 + 2b_2 \hat{x} \hat{z} + \frac{\sum_{k=3}^{4} g_{2,k}(\hat{x}, y, \hat{z})\, \rho_{xz}^{k-2}}{y^2}}}.
            \end{aligned}
        \end{equation*}
        We observe that the numerator's lowest-order term in $\rho_{xz}$ is $(a_1\hat{x}^2 + b_2 \hat{x} \hat{z}) \rho_{xz}$. Conversely, the denominator has a zeroth-order term  $\sqrt{a_1 \hat{x}^2 + a_3 \hat{z}^2 + 2b_2 \hat{x} \hat{z}}$.
        Because $f_k, g_{1,k}, g_{2,k}$ are continuous with respect to $\hat{x}, \hat{z}$, and under the condition $y \geq \delta_0$, we deduce that for any $\epsilon > 0$, there exists a positive constant $\delta_2$ such that for all $(x, y, z) \in B((0, \bar{y}, 0), \delta_2)$, the following inequality holds
        \begin{equation*}
            T_1(x,y,z) \leq \epsilon.
        \end{equation*}
        For $T_2(x,y,z)$, we have
        \begin{equation}
            \label{equation:page_18_1}
            \begin{aligned}
                T_2 &= \frac{(\sum_{k=2}^{3} f_k(\hat{x}, y, \hat{z})\, \rho_{xz}^{k-1}) q(x,y,z) }{( \sum_{k=0}^{2} g_{1,k}(\hat{x}, y, \hat{z})\, \rho_{xz}^{k}) \sqrt{ \sum_{k=2}^{4} g_{2,k}(\hat{x}, y, \hat{z})\, \rho_{xz}^{k-2}}}
                \\
                & = \frac{(\sum_{k=2}^{3} f_k(\hat{x}, y, \hat{z})\, \rho_{xz}^{k-1}) ((1+\varphi_{ij})\rho_{xz}\hat{x}y + \rho_{xz}\hat{x} + y + 1 - \sqrt{4\rho_{xz}\hat{x}y+2\rho_{xz}\hat{x}+2y+1}\,) }{( \sum_{k=0}^{2} g_{1,k}(\hat{x}, y, \hat{z})\, \rho_{xz}^{k}) \sqrt{ \sum_{k=2}^{4} g_{2,k}(\hat{x}, y, \hat{z})\, \rho_{xz}^{k-2}}} \\
                & = \frac{\frac{(\sum_{k=2}^{3} f_k(\hat{x}, y, \hat{z})\, \rho_{xz}^{k-1})}{y^2} ((1+\varphi_{ij})\rho_{xz}\hat{x} + \rho_{xz}\frac{\hat{x}}{y} + 1 + \frac{1}{y} - \sqrt{4\rho_{xz}\hat{x}\frac{1}{y}+2\rho_{xz}\frac{\hat{x}}{y^2}+2\frac{1}{y}+\frac{1}{y^2}}\,) }{(1 + \frac{\sum_{k=1}^{2} g_{1,k}(\hat{x}, y, \hat{z})\, \rho_{xz}^{k}}{y^2} ) \sqrt{a_1 \hat{x}^2 + a_3 \hat{z}^2 + 2b_2 \hat{x} \hat{z} + \frac{\sum_{k=3}^{4} g_{2,k}(\hat{x}, y, \hat{z})\, \rho_{xz}^{k-2}}{y^2}}}.
            \end{aligned}
        \end{equation}
        For the numerator of the above equality, we get that for any $\epsilon > 0$ there exists $\delta_2 > 0$, if $\rho_{xz} < \delta_2$, then
        \begin{equation*}
            \begin{aligned}
                &(1+\varphi_{ij})\rho_{xz}\hat{x} + \rho_{xz}\frac{\hat{x}}{y} + 1 + \frac{1}{y} - \sqrt{4\rho_{xz}\hat{x}\frac{1}{y}+2\rho_{xz}\frac{\hat{x}}{y^2}+2\frac{1}{y}+\frac{1}{y^2}}
                \\
                &\leq 1+\frac{1}{y} - \sqrt{2\frac{1}{y} + \frac{1}{y^2}} + \epsilon
                \\
                & \leq 1 + \epsilon.
            \end{aligned}
        \end{equation*}
        So the numerator's lowest-order term of (\ref{equation:page_18_1}) in $\rho_{xz}$ is also one order. By analogous analysis we can get similar results, i.e.,
    \begin{equation}
        \label{equation:proof_delta_2_estimate}
        \forall \epsilon > 0, \exists \delta_2 > 0, \forall \bar{z} > \delta_0, \forall (x,y,z) \in B((0,0,\bar{z}), \delta_2), T_2(x,y,z) \leq \epsilon.
    \end{equation}
        \begin{itemize}
            \item $y \to 0, z \to 0, x \to \bar{x} \geq \delta_0$
        \end{itemize}
        Let $\rho_{yz} = \sqrt{y^2 + z^2}$ and $y = \rho_{yz} \hat{y} , z= \rho_{yz} \hat{z}$. Since the region is in $\mathbb{R}_{>0}^3\cap\mathbb{S}^2$, we have $0 < \hat{y} <1$ and $0 < \hat{z} <1$. We express both the numerator and the denominator as polynomials in the variable $\rho_{yz}$
        \begin{equation}
        \begin{aligned}
            f(x, y, z) &= \sum_{k=2}^{3} f_k(x, \hat{y}, \hat{z})\, \rho_{yz}^k, \quad
            g_1(x, y, z) = \sum_{k=0}^{2} g_{1,k}(x, \hat{y}, \hat{z})\, \rho_{yz}^k, \quad
            g_2(x, y, z) = \sum_{k=2}^{4} g_{2,k}(x, \hat{y}, \hat{z})\, \rho_{yz}^k,
        \end{aligned}
    \end{equation}
    where
    \begin{equation}
        \begin{aligned}
            f_2(x, \hat{y}, \hat{z}) &= a_1 x^2 \hat{y}^2 + b_3 x^2 \hat{y} \hat{z}, \\
            f_3(x, \hat{y}, \hat{z}) &= (a_1 + b_2 + b_3)x^2 \hat{y}^2 \hat{z} + b_2 x \hat{y}^2 \hat{z}, \\
            \end{aligned}
        \end{equation}
        and
        \begin{equation}
            \begin{aligned}
                g_{1,0}(x, \hat{y}, \hat{z}) &= x^2, \\
                g_{1,1}(x, \hat{y}, \hat{z}) &= 2(1+\varphi_{ij})x^2\hat{y} + 2\varphi_{ij}x\hat{y}, \\
                g_{1,2}(x, \hat{y}, \hat{z}) &= (3+4\varphi_{ij}+\varphi_{ij}^2)x^2\hat{y}^2 + 2(1+\varphi_{ij})x\hat{y}^2 + \hat{y}^2, \\
            \end{aligned}
        \end{equation}
        and
        \begin{equation}
            \begin{aligned}
                g_{2,2}(x, \hat{y}, \hat{z}) &= a_1x^2\hat{y}^2 + a_2x^2\hat{z}^2 + 2b_3x^2\hat{y}\hat{z}, \\
                g_{2,3}(x, \hat{y}, \hat{z}) &= 2(a_1+b_2+b_3)x^2\hat{y}^2\hat{z} + 2(a_2+b_1+b_3)x^2\hat{y}\hat{z}^2 + 2b_2x\hat{y}^2\hat{z} + 2b_1x\hat{y}\hat{z}^2, \\
                g_{2,4}(x, \hat{y}, \hat{z}) &= a_1 x^2 \hat{y}^2 + 2(a_1 + b_2 + b_3)x^2 \hat{y}^2 \hat{z} + 2(c + b_1 + b_2 + b_3)x^2 \hat{y}^2 \hat{z}^2.
            \end{aligned}
        \end{equation}
         By the condition $\langle \star\rangle$, we have $a_1 + a_2 + b_3 = 1 - \varphi_{ij}^2 + 1 - \varphi_{jk}^2 + \varphi_{ik} + \varphi_{ij} \varphi_{jk} > 0$.  Hence we have $g_{2,2} > 0$. Dividing both the numerator and the denominator of $T_1(x,y,z)$ by $\rho_{yz}$ gives
        \begin{equation*}
            \begin{aligned}
                T_1(x,y,z) = \frac{(\sum_{k=2}^{3} f_k(x, \hat{y}, \hat{z})\, \rho_{yz}^{k-1}) \sqrt{(1 + 2x)(1+2\rho_{yz} \hat{y})}}{( \sum_{k=0}^{2} g_{1,k}(x, \hat{y}, \hat{z})\, \rho_{yz}^{k}) \sqrt{ \sum_{k=2}^{4} g_{2,k}(x, \hat{y}, \hat{z})\, \rho_{yz}^{k-2}}}.
            \end{aligned}
        \end{equation*}
        By dividing the numerator and the denominator of $T_1(x,y,z)$ by $x^3$, we obtain
        \begin{equation*}
            \begin{aligned}
                T_1(x,y,z) &= \frac{((a_1 \hat{y}^2 + b_3 \hat{y} \hat{z}) \rho_{yz} +  \frac{f_3(x, \hat{y}, \hat{z})}{x^2}\, \rho_{yz}^{2}) \sqrt{(\frac{1}{x^2} + 2\frac{1}{x})(1+2\rho_{yz} \hat{y})}}{( 1 + \sum_{k=1}^{2} \frac{g_{1,k}(x, \hat{y}, \hat{z})\, }{x^2}\rho_{yz}^{k}) \sqrt{ a_1 \hat{y}^2+ a_2 \hat{z}^2 + 2b_3 \hat{y} \hat{z} + \sum_{k=3}^{4} \frac{g_{2,k}(x, \hat{y}, \hat{z})}{x^2}\, \rho_{yz}^{k-2}}}.
            \end{aligned}
        \end{equation*}
        Dividing the numerator and the denominator of $T_2(x,y,z)$ by $x^3$, we get
        \begin{equation*}
            T_2 (x,y,z) = \frac{(\sum_{k=2}^{3} f_k(x, \hat{y}, \hat{z})\, \rho_{yz}^{k-1}) ((1+\varphi_{ij})\rho_{yz}\hat{y} + \rho_{yz}\frac{\hat{y}}{x} + 1 + \frac{1}{x} - \sqrt{4\rho_{yz}\hat{y}\frac{1}{x}+2\rho_{yz}\frac{\hat{y}}{x^2}+2\frac{1}{x}+\frac{1}{x^2}}\,.) }{( \sum_{k=0}^{2} g_{1,k}(x, \hat{y}, \hat{z})\, \rho_{yz}^{k}) \sqrt{ \sum_{k=2}^{4} g_{2,k}(x, \hat{y}, \hat{z})\, \rho_{yz}^{k-2}}}.
        \end{equation*}
        Analogous to the case where $ x \to 0, z \to 0, y \to \bar{y} \geq \delta_0 $, we obtain
        \begin{equation*}
            \forall \epsilon >0, \exists \delta_3 > 0, \forall \bar{x} \geq \delta_0, \forall (x,y,z) \in B((\bar{x},0,0), \delta_3), T_1(x,y,z) \leq \epsilon, T_2(x,y,z) \leq \epsilon.
        \end{equation*}

        Let $\bar{\delta} = \min \{\delta_0, \delta_1, \delta_2, \delta_3\}$.
        It remains to analyze the behavior in the regions defined by $\{x = 0, y \geq \delta, z \geq \delta\} \cup \{y = 0, x \geq \delta, z \geq \delta\} \cup \{z = 0, x \geq \delta, y \geq \delta\}$.
        These are the cases where, on the boundary, precisely one variable vanishes. The discussion for these three cases is presented below.

     \textbf{\text{Case 3:} One variable approaches zero}

        \begin{itemize}
            \item $ x \to 0, y \to \bar{y} \geq \bar{\delta}, z \to \bar{z} \geq \bar{\delta} $
        \end{itemize}

        We now express both the numerator and the denominator as polynomials in the variable $x$
        \begin{equation}
            \begin{aligned}
                f(x, y, z) = \sum_{k=1}^{2} f_k(y, z)\, x^k, \quad
                g_1(x, y, z) = \sum_{k=0}^{2} g_{1,k}(y, z)\, x^k, \quad
                g_2(x, y, z) = \sum_{k=0}^{2} g_{2,k}(y, z)\, x^k,
            \end{aligned}
        \end{equation}
        where
        \begin{equation}
            \begin{aligned}
                &f_1(y, z) = b_2 y^2 z, \\
                &f_2(y, z) = (a_1 + b_2 + b_3)y^2 z + b_3 y z + a_1 y^2,
         	\end{aligned}
        \end{equation}
        and
        \begin{equation}
            \begin{aligned}
                &g_{1,0}(y, z) = y^2, \\
                &g_{1,1}(y, z) = 2(1 + \varphi_{ij})y^2 + 2\varphi_{ij}y, \\
                &g_{1,2}(y, z) = (3 + 4\varphi_{ij} + \varphi_{ij}^2)y^2 + 2(1 + \varphi_{ij})y + 1,
         	\end{aligned}
        \end{equation}
        and
        \begin{equation}
            \begin{aligned}
                g_{2,0}(y, z) &= a_3 y^2 z^2,\\
                g_{2,1}(y, z) &= 2(a_3 + b_1 + b_2)y^2 z^2 + 2b_2 y^2 z + 2b_1 y z^2, \\
                g_{2,2}(y, z) &= 2(a_1 + b_2 + b_3)y^2 z + 2(a_2 + b_1 + b_3)y z^2 + a_1 y^2 + a_2 z^2
                \\
                & \quad + 2b_3 y z + 2(c + b_1 + b_2 + b_3)y^2 z^2.
            \end{aligned}
        \end{equation}
        By dividing both the numerator and the denominator of $T_1(x,y,z)$ by $y^3 z$, we obtain
        \begin{equation*}
            T_{1}(x,y,z) = \frac{(b_2 x + \frac{f_2(y,z)}{y^2z}x^2) \sqrt{(1+2x)(\frac{1}{y^2} + 2\frac{1}{y})}}{(1+ \sum_{k=1}^2\frac{g_{1,k}(y,z)}{y^2}x^{k}) \sqrt{a_3 + \sum_{k=1}^2\frac{g_{2,k}(y,z)}{y^2 z^2}x^k}}.
        \end{equation*}
        Analyzing these coefficients gives the following cases

        \begin{itemize}
            \item[(1)] If $ a_3 \ne 0 $, then for a small enough $\epsilon > 0$ there exists $\delta_{4} > 0$, if $x < \delta_4$, the denominator satisfies
            \begin{equation*}
                (\frac{1}{z^2}+ \sum_{k=1}^2\frac{g_{1,k}(y,z)}{y^2z^2}x^{k}) \sqrt{a_3 + \sum_{k=1}^2\frac{g_{2,k}(y,z)}{y^2 z^2}x^k} \geq {\sqrt{a_3}} - \epsilon,
            \end{equation*} while the numerator satisfies
            \begin{equation*}
                {(b_2 x + \frac{f_2(y,z)}{y^2z}x^2) \sqrt{(1+2x)(\frac{1}{y^2} + 2\frac{1}{y})}} \leq \epsilon.
            \end{equation*}
            Hence, when $x \leq \delta_4$, $ T_1(x, y, z) \leq \frac{\epsilon}{\sqrt{a_3} - \epsilon}$ for some small enough $\epsilon > 0$ in this case.

            \item[(2)] If $ a_3 = 0 $, we have $g_{2,0} \equiv 0$ and by (\ref{Equation2_a_b_c_definition}) we obtain that $ b_1 = b_2 $. When $ b_1 \ne 0 $, by dividing both the numerator and the denominator of $T_1(x,y,z)$ by $x^{\frac{1}{2}}$, we obtain
            \begin{equation*}
                \begin{aligned}
                    T_{1}(x,y,z) = \frac{(b_2 x^{\frac{1}{2}} + \frac{f_2(y,z)}{y^2z}x^{\frac{3}{2}}) \sqrt{(1+2x)(\frac{1}{y^2} + 2\frac{1}{y})}}{(1+ \sum_{k=1}^2\frac{g_{1,k}(y,z)}{y^2}x^{k}) \sqrt{2(b_1 + b_2)+ 2b_2 \frac{1}{z} + 2b_1 \frac{1}{y} + \frac{g_{2,2}(y,z)}{y^2 z^2}x}}.
                \end{aligned}
            \end{equation*}
            Then for a small enough $\epsilon > 0$ there exists $\delta_{4} > 0$, if $x < \delta_4$, the denominator satisfies
            \begin{equation*}
                {(1+ \sum_{k=1}^2\frac{g_{1,k}(y,z)}{y^2}x^{k}) \sqrt{2(b_1 + b_2)+ 2b_2 \frac{1}{z} + 2b_1 \frac{1}{y} + \frac{g_{2,2}(y,z)}{y^2 z^2}x}} \geq 2 b_1,
            \end{equation*}
            while the numerator satisfies
            \begin{equation*}
                {(b_2 x^{\frac{1}{2}} + \frac{f_2(y,z)}{y^2z}x^{\frac{3}{2}}) \sqrt{(1+2x)(\frac{1}{y^2} + 2\frac{1}{y})}} \leq \epsilon.
            \end{equation*}
            Hence, when $x \leq \delta_4$, $ T_1(x, y, z) \leq \frac{\epsilon}{2b_1}$ for some small enough $\epsilon > 0$ in this case.

            \item[(3)] If $ a_3 = 0 $ and $ b_1 = b_2 = 0 $, we have $f_1 \equiv 0, g_{2,0} \equiv 0$ and $g_{2,1} \equiv 0$. We divide both the numerator and the denominator of $T_1(x,y,z)$ by $x$, and obtain
            \begin{equation}
            	T_{1}(x,y,z) = \frac{((a_1 + b_2 + b_3)+  b_3 \frac{1}{y} + a_1 \frac{1}{z})x \sqrt{(1+2x)(\frac{1}{y^2} + 2\frac{1}{y})}}{(1+ \sum_{k=1}^2\frac{g_{1,k}(y,z)}{y^2}x^{k}) \sqrt{\frac{g_{2,2}(y,z)}{y^2 z^2}}}.
            \end{equation}
            By analogous analysis, we get that when $x \leq \delta_4$, $ T_1(x, y, z) \leq \epsilon$ for some small enough $\epsilon > 0$ in this case.
        \end{itemize}

        For $T_2(x,y,z)$, dividing both numerator and denominator by $y^3z$ gives
        \begin{equation*}
            T_2(x,y,z) = \frac{(b_2 x + \frac{f_2(y,z)}{y^2z}x^2) ((1+\varphi_{ij}) x + \frac{x}{y} + 1 + \frac{1}{y} - \sqrt{4\frac{x}{y} + 2\frac{x}{y^2} + 2\frac{1}{y} + \frac{1}{y^2}})}{(1+ \sum_{k=1}^2\frac{g_{1,k}(y,z)}{y^2}x^{k}) \sqrt{a_3 + \sum_{k=1}^2\frac{g_{2,k}(y,z)}{y^2 z^2}x^k}}.
        \end{equation*}
        We observe that the numerator of $T_2(x,y,z)$ remains arbitrarily small under the three previously discussed cases. In summary, we have
        \begin{equation*}
            \forall \epsilon > 0, \exists \delta_4 > 0, \forall \bar{y} \geq \bar{\delta}, \bar{z} \geq \bar{\delta}, \forall (x,y,z) \in B((0,\bar{y},\bar{z}), \delta_4), T_1(x,y,z) \leq \epsilon, T_2(x,y,z) \leq \epsilon.
        \end{equation*}

        \begin{itemize}
            \item $ y \to 0, x \to \bar{x} \geq \bar{\delta}, z \to \bar{z} \geq \bar{\delta} $
        \end{itemize}

        We express both the numerator and the denominator as polynomials in the variable $y$
        \begin{equation}
        \begin{aligned}
            f(x, y, z) &= \sum_{k=1}^{2} f_k(x, z)\, y^k, \quad
            g_1(x, y, z) = \sum_{k=0}^{2} g_{1,k}(x, z)\, y^k, \quad
            g_2(x, y, z) = \sum_{k=0}^{2} g_{2,k}(x, z)\, y^k,
        \end{aligned}
    \end{equation}
    where
    \begin{equation}
        \begin{aligned}
            &f_1(x, z) = b_3 x^2 z, \\
            &f_2(x, z) = (a_1 + b_2 + b_3)x^2 z + b_2 x z + a_1 x^2,
    	\end{aligned}
    \end{equation}
    and
    \begin{equation}
        \begin{aligned}
            &g_{1,0}(x, z) = x^2, \\
            &g_{1,1}(x, z) = 2(1 + \varphi_{ij})x^2 + 2\varphi_{ij}x, \\
            &g_{1,2}(x, z) = (3 + 4\varphi_{ij} + \varphi_{ij}^2)x^2 + 2(1 + \varphi_{ij})x + 1,
    	\end{aligned}
    \end{equation}
    and
    \begin{equation}
    	\begin{aligned}
    		g_{2,0}(x, z) &= a_2 x^2 z^2, \\
    		g_{2,1}(x, z) &= 2(a_2 + b_1 + b_3)x^2 z^2 + 2b_3 x^2 z + 2b_1 x z^2, \\
    		g_{2,2}(x, z) &= 2(a_1 + b_2 + b_3)x^2 z + 2(a_3 + b_1 + b_2)x z^2 + a_1 x^2 + a_3 z^2 \\
    		&\quad + 2b_2 x z + 2(c + b_1 + b_2 + b_3)x^2 z^2.
    	\end{aligned}
    \end{equation}
    Observing the coefficients, we note a symmetry with the case $ x \to 0, y \to \bar{y} \geq \bar{\delta}, z \to \bar{z} \geq \bar{\delta} $. Hence, applying the analogous method we have
    \begin{equation*}
        \forall \epsilon > 0, \exists \delta_5 > 0, \forall \bar{x} \geq \bar{\delta}, \forall \bar{z} \geq \bar{\delta}, \forall (x,y,z) \in B((\bar{x},0,\bar{z}), \delta_5), T_1(x,y,z) \leq \epsilon, T_2(x,y,z) \leq \epsilon.
    \end{equation*}

    \begin{itemize}
        \item  $ z \to 0, x \to \bar{x} \geq \bar{\delta}, z \to \bar{y} \geq \bar{\delta} $
    \end{itemize}
    We express both the numerator and the denominator as polynomials in the variable $z$
        \begin{equation}
        \begin{aligned}
            f(x, y, z) = \sum_{k=0}^{1} f_k(x, y)\, z^k, \quad
            g_1(x, y, z) = g_{1,0}(x, y), \quad
            g_2(x, y, z) = \sum_{k=0}^{2} g_{2,k}(x, y)\, z^k,
        \end{aligned}
    \end{equation}
    where
        \begin{equation}
            \begin{aligned}
                f_0(x, y) &= a_1 x^2 y^2, \\
                f_1(x, y) &= (a_1 + b_2 + b_3)x^2 y^2 + b_2 x y^2 + b_3 x^2 y, \quad
        	 \end{aligned}
     	\end{equation}	
     	and
     	\begin{equation}
     	        \begin{aligned}
                g_{1,0}(x, y) = (3 + 4\varphi_{ij} + \varphi_{ij}^2)x^2 y^2 + 2(1 + \varphi_{ij})(x^2 y + x y^2) + x^2 + y^2 + 2\varphi_{ij}xy,
       \end{aligned}
           \end{equation}
           and
       \begin{equation}
               \begin{aligned}
                g_{2,0}(x, y) &= a_1 x^2 y^2, \\
                g_{2,1}(x, y) &= 2(a_1 + b_2 + b_3)x^2 y^2 + 2b_2 x y^2 + 2b_3 x^2 y, \\
                g_{2,2}(x, y) &= 2(a_3 + b_1 + b_2)x y^2 + 2(a_2 + b_1 + b_3)x^2 y + a_3 y^2 + a_2 x^2\\
                 &\quad + 2b_1 x y + 2(c + b_1 + b_2 + b_3)x^2 y^2.
            \end{aligned}
        \end{equation}
        By dividing both the numerator and the denominator of $T_1(x,y,z)$ by $x^3 y^3$, we obtain
        \begin{equation*}
            T_1(x,y,z) = \frac{(a_1 + \frac{f_1(x,y)}{x^2 y^2 } z)\sqrt{(\frac{1}{x^2} + 2\frac{1}{x})(\frac{1}{y^2} + 2\frac{1}{y})}}{(3 + 4\varphi_{ij} + \varphi_{ij}^2 + 2(1+\varphi_{ij})(\frac{1}{x} + \frac{1}{y})+ \frac{1}{y^2} + \frac{1}{x^2} + 2\varphi_{ij} \frac{1}{x} \frac{1}{y}) \sqrt{a_1 + \frac{g_{2,1}(x,y)}{x^2 y^2}z + \frac{g_{2,2}(x,y)}{x^2 + y^2}z^2}}.
        \end{equation*}
        Dividing both the numerator and the denominator of $T_2(x,y,z)$ by $x^3 y^3$, we obtain
        \begin{equation*}
            T_2(x,y,z) = \frac{(b_2 x + \frac{f_2(x,y)}{x^2 y^2}x^2) (1+\varphi_{ij}+ \frac{1}{y} + \frac{1}{x} + \frac{1}{xy} - \sqrt{4\frac{1}{xy} + 2\frac{1}{xy^2} + 2\frac{1}{yx^2} + \frac{1}{x^2y^2}})}{(1+ \sum_{k=1}^2\frac{g_{1,k}(x,y)}{x^2}x^{k}) \sqrt{a_1 + \sum_{k=1}^2\frac{g_{2,k}(x,y)}{x^2 y^2}x^k}}.
        \end{equation*}
        Analyzing these coefficients, we obtain the following cases.

       \noindent (1) If $ a_1 \ne 0 $, then for any $\epsilon > 0$, there exits a constant $\delta_6 > 0$ such that if $(x,y,z) \in B((\bar{x},\bar{y}, 0), \delta_6)$ for any $\bar{y} \geq \bar{\delta}, \bar{z} \geq \bar{\delta}$, it follows that
       \begin{equation*}
       \begin{aligned}
              		T_1(x,y,z) &\leq  \frac{a_1\sqrt{(\frac{1}{\bar{x}^2} + 2\frac{1}{\bar{x}})(\frac{1}{\bar{y}^2} + 2\frac{1}{\bar{y}})}}{(3 + 4\varphi_{ij} + \varphi_{ij}^2 + 2(1+\varphi_{ij})(\frac{1}{\bar{x}} + \frac{1}{\bar{y}})+ \frac{1}{\bar{y}^2} + \frac{1}{\bar{x}^2} + 2\varphi_{ij} \frac{1}{\bar{x}} \frac{1}{\bar{y}}) \sqrt{a_1}} + \epsilon
              		\\
              		&\leq  \frac{a_1\sqrt{(\frac{1}{\bar{x}^2} + 2\frac{1}{\bar{x}})(\frac{1}{\bar{y}^2} + 2\frac{1}{\bar{y}})}}{(3 + 4\varphi_{ij} + \varphi_{ij}^2)\sqrt{a_1} } + \epsilon
              		\\
              		&\leq  \frac{a_1\sqrt{(\frac{1}{\bar{\delta}^2} + 2\frac{1}{\bar{\delta}})(\frac{1}{\bar{\delta}^2} + 2\frac{1}{\bar{\delta}})}}{(3 + 4\varphi_{ij} + \varphi_{ij}^2)\sqrt{a_1} } + \epsilon,
       \end{aligned}
       \end{equation*}
       and
       \begin{equation*}
            T_2(x,y,z) \leq \frac{a_1 (1+ \varphi_{ij} + \frac{2}{\bar{\delta}} + \frac{1}{\bar{\delta}^2} + \sqrt{4\frac{1}{\bar{\delta}^2} + 4\frac{1}{\bar{\delta}^3} + \frac{1}{\bar{\delta}^4}})}{(3 + 4\varphi_{ij} + \varphi_{ij}^2)\sqrt{a_1} } + \epsilon.
       \end{equation*}
       Hence, let $$C =  \max\{\frac{a_1\sqrt{(\frac{1}{\bar{\delta}^2} + 2\frac{1}{\bar{\delta}})(\frac{1}{\bar{\delta}^2} + 2\frac{1}{\bar{\delta}})}}{(3 + 4\varphi_{ij} + \varphi_{ij}^2)\sqrt{a_1} } + 1, \frac{a_1 (1+ \varphi_{ij} + \frac{2}{\bar{\delta}} + \frac{1}{\bar{\delta}^2} + \sqrt{4\frac{1}{\bar{\delta}^2} + 4\frac{1}{\bar{\delta}^3} + \frac{1}{\bar{\delta}^4}})}{(3 + 4\varphi_{ij} + \varphi_{ij}^2)\sqrt{a_1} } + 1\},$$
        the result in this case follows.

    	(2) If $ a_1 = 0 $, we have $f_0 \equiv 0$ and $g_{2,0} \equiv 0$ and by (\ref{Equation2_a_b_c_definition}) we get that $ b_2 = b_3 $. When $ b_2 \ne 0 $, by dividing both the numerator and the denominator of $T_1(x,y,z)$ by $z^{1/2}$, we obtain
    	\begin{equation*}
    	T_1(x,y,z) = \frac{(a_1 + b_2 + b_3 + b_2 \frac{1}{x} + b_3 \frac{1}{y}) z^{\frac{1}{2}})\sqrt{(\frac{1}{x^2} + 2\frac{1}{x})(\frac{1}{y^2} + 2\frac{1}{y})}}{(3 + 4\varphi_{ij} + \varphi_{ij}^2 + 2(1+\varphi_{ij})(\frac{1}{x} + \frac{1}{y})+ \frac{1}{y^2} + \frac{1}{x^2} + 2\varphi_{ij} \frac{1}{x} \frac{1}{y}) \sqrt{ \frac{g_{2,1}(x,y)}{x^2 y^2} + \frac{g_{2,2}(x,y)}{x^2 + y^2}z^2}},
    	\end{equation*}
    	 then for a small enough $\epsilon > 0$ there exists $\delta_{6} > 0$, if $(x,y,z) \in B((\bar{x},\bar{y}, 0), \delta_6)$, the denominator satisfies
         \begin{equation*}
         \begin{aligned}
                      &{(3 + 4\varphi_{ij} + \varphi_{ij}^2 + 2(1+\varphi_{ij})(\frac{1}{x} + \frac{1}{y})+ \frac{1}{y^2} + \frac{1}{x^2} + 2\varphi_{ij} \frac{1}{x} \frac{1}{y}) \sqrt{ \frac{g_{2,1}(x,y)}{x^2 y^2} + \frac{g_{2,2}(x,y)}{x^2 + y^2}z^2}} \\
                      &
                      \geq (3 + 4\varphi_{ij} + \varphi_{ij}^2)\sqrt{4b_2^2},
         \end{aligned}
         \end{equation*}
         while the numerator satisfies
         \begin{equation*}
             {(a_1 + b_2 + b_3 + b_2 \frac{1}{x} + b_3 \frac{1}{y}) z^{\frac{1}{2}}\sqrt{(\frac{1}{x^2} + 2\frac{1}{x})(\frac{1}{y^2} + 2\frac{1}{y})}} \leq \epsilon.
         \end{equation*}
         An analogous argument applies to the numerator of $T_2(x,y,z)$.
         Hence, there exists $\delta_{7} > 0$ such that if $(x,y,z) \in B((\bar{x},\bar{y}, 0), \delta_7)$, we have $ T_1(x, y, z) \leq \epsilon, T_2(x,y,z) \leq \epsilon$ for some small enough $\epsilon > 0$.
    	
    	(3) If $ a_1 = 0 $ and $ b_2 = b_3 = 0 $, we have that $f(x,y,z) \equiv 0$ which implies that $T_1(x,y,z) \equiv 0$ and $T_2(x,y,z) \equiv 0$. Hence the result in this case follows.
    	
    	Let $\delta = \min\{\delta_0, \delta_1, \delta_2, \delta_3, \delta_4, \delta_5, \delta_6\}$.
    	 Summarizing the preceding cases, we conclude that there exists $C_1(\varphi_{ij}, \varphi_{ik}, \varphi_{jk})>0$ and $C_2(\varphi_{ij}, \varphi_{ik}, \varphi_{jk})>0$ such that if $\min\{x,y,z\} \leq \delta$, then $T_1(x,y,z) < C_1(\varphi_{ij}, \varphi_{ik}, \varphi_{jk})$ and $T_2(x,y,z) < C_2(\varphi_{ij}, \varphi_{ik}, \varphi_{jk})$.

    Based on all the discussions of $\Omega_1, \Omega_2$, we conclude that $T_1(x, y, z)$ and $T_2(x, y, z)$ are uniformly bounded above by positive constants depending only on $\Phi$, which are denoted by
    \begin{align*}
        M_1=M_1(\Phi) &= \max_{\triangle_{ijk} \in F} \big\{C_1(\varphi_{ij}, \varphi_{ik}, \varphi_{jk}),  \frac{5}{128 a^{14}(1+\bar{\varphi})^2\sqrt{1+\bar{\varphi}} }\big\}, \\
        M_2=M_2(\Phi) &=  \max_{\triangle_{ijk} \in F} \big\{C_2(\varphi_{ij}, \varphi_{ik}, \varphi_{jk}),  \frac{5}{64 a^{14}(1+\bar{\varphi})^2\sqrt{1+\bar{\varphi}} }\big\}.
    \end{align*}
    By (\ref{AB-equality}) and Corollary \ref{Corollary:Glickenstein-corollary} we have
    $$
    A_i = \sum_{v_j \sim v_i} B_{ij}(\cosh l_{ij} - 1), \quad
    B_{ij} = \frac{\partial \theta_i^{jk}}{\partial u_j} + \frac{\partial \theta_i^{jl}}{\partial u_j},
    $$
    where $\theta_i^{jk}$ and $\theta_i^{jl}$ are inner angles in triangles $\triangle_{ijk}$ and $\triangle_{ijl}$ respectively. Let $d = \max_{v_i \in V} \{\deg(v_i)\}$, we obtain
    \begin{align*}
        |B_{ij}| &\le \left| \frac{\partial \theta_{i}^{jk}}{\partial u_{j}} \right| + \left| \frac{\partial \theta_{i}^{jl}}{\partial u_{j}} \right| \le M_1 + M_1 = 2 M_1, \\
        |A_i| &= \left| \sum_{v_j \sim v_i} (\frac{\partial \theta_{j}^{ik}}{\partial u_{i}} + \frac{\partial \theta_{j}^{il}}{\partial u_{i}})(\cosh l_{ij}-1) \right| \le \sum_{v_j \sim v_i} (M_2 + M_2) = 2 d_i M_2 \le 2 d M_2.
    \end{align*}
    Thus, there exist positive constants $ C_1(\Phi) = 2d M_2 $ and $ C_2(\Phi) = 2 M_1 $, depending only on $\Phi$ and the combinatorial structure of the triangulation $\mathcal{T}=(V,E,F)$, such that the inequalities $0< A_i \leq C_1(\Phi) $ and $0< B_{ij} \leq C_2(\Phi) $ hold for all vertices $ v_i, v_j $ and any circle packing metric $ r \in \mathbb{R}_{>0}^N $. This complete the proof of Theorem \ref{Theorem2_uniform_upper_bound}. \hfill $\square$
\end{proof}

\section{Long time existence of the combinatorial $p$-th Calabi flows}

In this section, we give new proofs of the long time existences of solutions to the combinatorial Calabi flows established by Ge-Xu \cite{Ge2016}, Ge-Hua \cite{Ge2018} and the combinatorial $p$-th Calabi flows established by Lin-Zhang \cite{Lin2019} in $\mathbb{H}^2$, and prove Theorem \ref{long time existence}, based on our main results on two improved estimates for the discrete Laplace operator, namely Theorem \ref{main result1} and Theorem \ref{main result2}.

\begin{theorem}\label{longtime existence}
    For any initial circle packing metric $r(0) \in \mathbb{R}_{>0}^{N}$, the solutions to both the combinatorial Calabi flows (\ref{Equation:rewritten_calabi_equation}) and the combinatorial $p$-th Calabi flow (\ref{Equation:combinatorial_p_th_calabi_flow}) in $\mathbb{H}^{2}$ exist for all time $t \in [0,+\infty)$.
\end{theorem}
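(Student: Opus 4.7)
The plan is a standard ODE continuation argument in the $u$-coordinates, where equations (\ref{Equation:rewritten_calabi_equation}) and (\ref{Equation:combinatorial_p_th_calabi_flow}) are autonomous ODEs with smooth right-hand side on the open domain $\mathbb{R}_{<0}^N$. By Picard-Lindel\"of there is a unique smooth maximal solution on $[0, T_{\max})$ with $T_{\max} \in (0,+\infty]$. To conclude $T_{\max} = +\infty$ I argue by contradiction: assuming $T_{\max} < \infty$, I will show that $u(t)$ remains in a compact subset of $\mathbb{R}_{<0}^N$ as $t \to T_{\max}^-$, so local existence re-applied at the limit point extends the solution past $T_{\max}$, contradicting the maximality.

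The core a priori estimate is a uniform Lipschitz bound $|u_i'(t)| \le M$ with $M$ independent of $t$. Two ingredients combine. First, since every inner angle satisfies $\theta_i^{jk} \in (0,\pi)$, the curvature obeys the purely combinatorial bound $|K_i| \le (d+2)\pi$ with $d = \max_i \deg(v_i)$; second, Theorem \ref{main result2} gives $0 < A_i \le C_1(\Phi)$ and $0 < B_{ij} \le C_2(\Phi)$ with constants depending only on $\Phi$ and independent of the metric. Substituting these into (\ref{Equation:rewritten_calabi_equation}) and (\ref{Equation:combinatorial_p_th_calabi_flow}) yields $|u_i'(t)| \le M$ uniformly in $t$; in the $p$-th case the extra factor $|K_j-K_i|^{p-2} \le (2(d+2)\pi)^{p-2}$ is itself bounded by a combinatorial constant. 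Integrating on $[0,T_{\max})$ then gives $|u_i(t) - u_i(0)| \le M T_{\max}$, hence the lower bound $r_i(t) \ge R_0 > 0$ on $[0,T_{\max})$. This lower boundedness of the metric, emphasized in the introduction as the historically most delicate obstacle, drops out immediately from the universal estimates of Theorem \ref{main result2}.

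It remains to produce the matching upper bound $r_i(t) \le R_1$ so that $u(t)$ stays bounded away from the boundary $u_i = 0$. With $r_i \ge R_0$ now in hand, Theorem \ref{main result1} supplies two-sided control $a_1(R_0,\Phi) \le A_i \le a_2(R_0,\Phi)$ and $0 \le B_{ij} \le a_3(R_0,\Phi)$ on $[0, T_{\max})$. The plan is a barrier estimate: for $r_i$ large, the hyperbolic law of cosines forces $\theta_i^{jk} \to 0$ at $v_i$ (since $l_{ij}+l_{ik}-l_{jk} \to \infty$ once the neighboring radii are bounded below by $R_0$), so $K_i \to 2\pi$; meanwhile Corollary \ref{Corollary:Glickenstein-corollary} together with the lower bound $a_1$ from Theorem \ref{main result1} keeps $A_i$ bounded below by a positive constant, while the off-diagonal contribution $\sum_{v_j \sim v_i} B_{ij}|K_j-K_i|^{p-2}(K_j-K_i)$ becomes asymptotically negligible since $B_{ij}$ itself decays as $r_i \to \infty$. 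Consequently $u_i'(t) \le -A_iK_i + o(1) < 0$ once $r_i$ is sufficiently large, producing a barrier that prevents $u_i(t)$ from reaching $0$ in finite time and yielding the desired $R_1$. Combining the two bounds places $u(t)$ in a compact subset of $\mathbb{R}_{<0}^N$ up to $T_{\max}$, closing the contradiction. The hard part, I expect, will be making this barrier estimate quantitative and uniform in all admissible metrics, particularly in the $p$-th Calabi case where the nonlinear factor $|K_j-K_i|^{p-2}$ requires a careful sign and magnitude analysis to verify the dominance of the $-A_iK_i$ term for every $p>1$.
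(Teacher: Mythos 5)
Your overall strategy matches the paper's: a Picard–Lindel\"of continuation in the $u$-coordinates, a uniform Lipschitz bound $|u_i'|\le M$ from Theorem \ref{main result2} and the combinatorial bound on $K$ to obtain the finite-time lower bound $r_i\ge R_0$, and then a barrier estimate to obtain the upper bound. The difference is in how the barrier is produced. The paper first invokes the Zhang--Zheng lemma (Lemma \ref{lemma}) to force $K_i>\pi$ and $K_j-K_i<\pi$ once $r_i>l$, and then, rather than splitting $u_i'$ into a negative $-A_iK_i$ piece and an "asymptotically negligible" $\sum B_{ij}|K_j-K_i|^{p-2}(K_j-K_i)$ piece as you do, it plugs these bounds in together with the exact identity $A_i=\sum_{v_j\sim v_i}B_{ij}(\cosh l_{ij}-1)$ from Corollary \ref{Corollary:Glickenstein-corollary} to get
\[
u_i'\le \pi^{p-1}\sum_{v_j\sim v_i}B_{ij}\Bigl(1-\tfrac{\cosh l_{ij}-1}{\pi^{p-2}}\Bigr),
\]
which is $\le 0$ term-by-term as soon as $\cosh l_{ij}-1\ge \pi^{p-2}$, a threshold depending only on $\Phi$ and $p$, not on $R_0$. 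This avoids entirely the step you flagged as "the hard part": there is no need to establish that the off-diagonal sum is $o(1)$ relative to $-A_iK_i$, no need for the lower bound $a_1(R_0,\Phi)\le A_i$ from Theorem \ref{main result1}, and no need for the quantitative exponential decay $B_{ij}\lesssim e^{-r_i-r_j}$ (which, note, is only available inside the \emph{proof} of Theorem \ref{Proposition:A_i_and_B_ij_bound_compactness}, not from the \emph{statement} of Theorem \ref{main result1}, which merely says $B_{ij}\le a_3$; you would either have to pull out that intermediate inequality or re-derive the decay from $A_i=\sum B_{ij}(\cosh l_{ij}-1)\le a_2$ and the growth of $\cosh l_{ij}$). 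Your route can be made to work on a fixed finite interval $[0,T_{\max})$ where $R_0$ is pinned, but the paper's factorization through Corollary \ref{Corollary:Glickenstein-corollary} gives a cleaner, metric-uniform sign condition and sidesteps the delicate dominance analysis you anticipated, especially for the nonlinear weight $|K_j-K_i|^{p-2}$ where the paper only needs the elementary monotonicity $a\le b\Rightarrow |a|^{p-2}a\le|b|^{p-2}b$.
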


Before proving the theorem, we first present an important lemma established by Zhang and Zheng \cite{Zhang24} (see Lemma 4.1 in \cite{Zhang24}).
\begin{lemma}\label{lemma}
    Let $ \Delta v_{i} v_{j} v_{k} $ be a hyperbolic triangle which is patterned by three circles with the fixed weight $ \Phi_{i j}, \Phi_{i k}, \Phi_{j k} \in[0, \pi) $ satisfying $ (\star) $. Let $ \theta_{i}^{j k} $ be the inner angle at $ v_{i} $. Then $ \forall \epsilon>0 $, there exists a number $ l $ so that when $ r_{i}>l $, the inner angle $ \theta_{i}^{j k} $ is smaller than $ \epsilon $.
\end{lemma}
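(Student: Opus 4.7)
The plan is to reduce the statement to an elementary asymptotic computation based on the hyperbolic law of cosines. Once the triangle $\triangle v_iv_jv_k$ is realized as a hyperbolic triangle (which the condition $(\star)$ guarantees via Zhou's three-circle configuration theorem), the inner angle at $v_i$ satisfies
\begin{equation*}
\cos\theta_i^{jk}=\frac{\cosh l_{ij}\cosh l_{ik}-\cosh l_{jk}}{\sinh l_{ij}\sinh l_{ik}}=\coth l_{ij}\,\coth l_{ik}-\frac{\cosh l_{jk}}{\sinh l_{ij}\sinh l_{ik}}.
\end{equation*}
Since the edge length $l_{jk}$ depends only on $r_j$, $r_k$ and $\Phi_{jk}$, it does not change at all when $r_i$ varies. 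Hence the whole problem reduces to showing that $l_{ij},l_{ik}\to\infty$ as $r_i\to\infty$, with $r_j,r_k$ and the weights held fixed.

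First I would verify that $l_{ij}\to\infty$. Rewriting the edge length formula as
\begin{equation*}
\cosh l_{ij}=\cosh r_i\bigl(\cosh r_j+\tanh r_i\,\sinh r_j\cos\Phi_{ij}\bigr),
\end{equation*}
the key observation is that the factor in parentheses is bounded below by a strictly positive constant for $r_i$ large enough. Indeed, since $\Phi_{ij}\in[0,\pi)$ gives $\cos\Phi_{ij}>-1$, one has
\begin{equation*}
\cosh r_j+\sinh r_j\cos\Phi_{ij}>\cosh r_j-\sinh r_j=e^{-r_j}>0,
\end{equation*}
and since $\tanh r_i\to 1$, for all sufficiently large $r_i$ the parenthesized quantity is at least $\tfrac12 e^{-r_j}>0$. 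Consequently $\cosh l_{ij}\ge c(r_j,\Phi_{ij})\cosh r_i$, so $l_{ij}\to\infty$ as $r_i\to\infty$. The identical argument gives $l_{ik}\to\infty$.

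Substituting these facts into the displayed identity, the first term $\coth l_{ij}\,\coth l_{ik}$ tends to $1$ (exponentially fast, since $\coth l-1=O(e^{-2l})$), while the second term $\cosh l_{jk}/(\sinh l_{ij}\sinh l_{ik})$ tends to $0$ because its numerator is fixed and its denominator grows without bound. Therefore $\cos\theta_i^{jk}\to 1$, which is equivalent to $\theta_i^{jk}\to 0$. Given any $\epsilon>0$, one extracts an explicit threshold $l=l(\epsilon,r_j,r_k,\Phi)$ by combining the growth estimate $\cosh l_{ij}\ge c\cosh r_i$ with the elementary quantitative bounds on $\coth$ and $\sinh$ for large arguments.

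The main obstacle is essentially careful bookkeeping rather than a conceptual difficulty: one must handle the potentially negative mixed term $\sinh r_i\sinh r_j\cos\Phi_{ij}$ when $\Phi_{ij}\in(\pi/2,\pi)$, and it is precisely the strict inequality $\Phi_{ij}<\pi$ (rather than $\leq\pi$) that ensures $\cos\Phi_{ij}>-1$ and prevents $\cosh l_{ij}$ from staying bounded as $r_i\to\infty$. The role of the condition $(\star)$ is implicit: by Zhou's theorem it guarantees that the three edge lengths $l_{ij},l_{ik},l_{jk}$ satisfy the triangle inequalities, so that $\triangle v_iv_jv_k$ is a genuine hyperbolic triangle on which the law of cosines applies in the first place.
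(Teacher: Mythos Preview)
The paper does not supply its own proof of this lemma; it is quoted from Zhang--Zheng \cite{Zhang24} and invoked as a black box. Your approach via the hyperbolic law of cosines is correct and is essentially the standard one.

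There is, however, a genuine gap concerning the \emph{uniformity} of the threshold. You explicitly produce $l=l(\epsilon,r_j,r_k,\Phi)$ depending on $r_j,r_k$: your claim that ``the numerator $\cosh l_{jk}$ is fixed'' only holds when $r_j,r_k$ are held fixed, and your lower bound $\cosh l_{ij}\ge c(r_j,\Phi_{ij})\cosh r_i$ also carries $r_j$. But the lemma as stated in \cite{Zhang24} (and the earlier Chow--Luo version for $\Phi\in[0,\pi/2]$) asserts that $l$ depends \emph{only} on $\epsilon$ and the weights, uniformly over all $r_j,r_k>0$. This uniformity is exactly what the proof of Theorem~\ref{long time existence} needs: there one picks a single $l$ so that $r_i(t)>l$ forces $K_i>\pi$, while the neighbouring radii $r_j(t),r_k(t)$ are moving along the flow.

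The fix is a small sharpening of your estimate. Instead of $\cosh l_{ij}\ge c(r_j)\cosh r_i$, keep the factor $\cosh r_j$:
\[
\cosh l_{ij}=\cosh r_i\cosh r_j\bigl(1+\tanh r_i\tanh r_j\cos\Phi_{ij}\bigr)\;\ge\;\bigl(1+\min(0,\cos\Phi_{ij})\bigr)\cosh r_i\cosh r_j,
\]
so that for $r_i$ large (threshold depending only on $\Phi_{ij}$) one has $\sinh l_{ij}\ge c_{ij}\cosh r_i\cosh r_j$ with $c_{ij}>0$ depending only on $\Phi_{ij}$. Since $\cosh l_{jk}\le 2\cosh r_j\cosh r_k$, the factors $\cosh r_j,\cosh r_k$ cancel in the ratio and
\[
\frac{\cosh l_{jk}}{\sinh l_{ij}\,\sinh l_{ik}}\;\le\;\frac{4}{c_{ij}c_{ik}\cosh^{2}r_i}\;\longrightarrow\;0
\]
uniformly in $r_j,r_k$. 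Together with $\coth l_{ij}\coth l_{ik}\to 1$ (which is already uniform, since $l_{ij},l_{ik}\to\infty$ uniformly by the same bound), this gives $\cos\theta_i^{jk}\to 1$ uniformly, as required.
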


\textbf{Proof of Theorem \ref{longtime existence}}  Note that the combinatorial Calabi flow (\ref{Equation:rewritten_calabi_equation}) is precisely the combinatorial $p$-th Calabi flow (\ref{Equation:combinatorial_p_th_calabi_flow}) in the case of $p=2$, hence we only need to consider the combinatorial $p$-th Calabi flow (\ref{Equation:combinatorial_p_th_calabi_flow}).  We begin by showing that the solution to the $p$-th Calabi flow (\ref{Equation:combinatorial_p_th_calabi_flow}) admits a positive lower bound on any finite time interval $[0,T)$ for $0 \leq T < +\infty$.

    Let $ d_{i} $ denote the degree at vertex $ v_{i} $, which is the number of edges adjacent to $ v_{i} $. Set $ d=\max \left\{d_{1}, \ldots, d_{N}\right\} $, by the definition of the combinatorial Gauss curvature
        \begin{equation*}
    K_i = 2\pi - \sum_{\triangle_{ijk} \in F} \theta_{i}^{jk},
\end{equation*}
 then we have $ (2-d) \pi<K_{i}<2 \pi $, hence
    \begin{equation*}
        \left|K_{j}-K_{i}\right|<d \pi, \forall v_j \sim v_i.
    \end{equation*}
    By (\ref{Equation:combinatorial_p_th_calabi_flow}) and Theorem \ref{main result2}, we obtain
    \begin{equation*}
        \begin{aligned}
            |\frac{d u_i}{dt}| & = |\sum_{v_j \sim v_i} B_{ij}|K_j-K_i|^{p-2}(K_j-K_i) +A_i K_i|\\
                               & \leq \sum_{v_j \sim v_i} B_{ij}|K_j-K_i|^{p-1} + A_i |K_i| \\
                               & \leq d C_2 (d\pi)^{p-1} + C_1(d+2)\pi  \triangleq C,
        \end{aligned}
    \end{equation*}
    where $C$ is a positive constant depending only on the triangulation $\mathcal{T}$. Thus, we have
    \begin{equation*}
        |u_i(t)| \leq |u_i(0)| + C t, \forall i=1, 2, \ldots, N.
    \end{equation*}
    This implies that \(u_i(t)\) is bounded above over any finite time interval, and since \(u_i=\ln \tanh \frac{r_i(t)}{2}\), it follows that \(r_i(t)\) is bounded below in finite time as follows
    \begin{equation*}
        0< \tanh (\frac{r_i(0)}{2}) e^{-Ct}\leq \tanh (\frac{r_i(t)}{2}) \leq \tanh (\frac{r_i(0)}{2}) e^{Ct}.
    \end{equation*}
    Thus we can get an estimate of the lower bound of $r_i(t)$
    \begin{equation*}
        r_{i}(t) \geq \frac{1}{2}\ln \frac{1+\tanh (\frac{r_i(0)}{2}) e^{-Ct}}{1-\tanh (\frac{r_i(0)}{2}) e^{-Ct}} > 0.
    \end{equation*}
    Next, we prove that the solution to the flow equation admits a uniform upper bound. For a vertex \(i\), by Lemma \ref{lemma} one may choose a sufficiently large positive constant \(l\) such that \(r_i>l\) and the interior angle \(\theta_i\) is less than \(\frac{\pi}{2d_i}\), where \(d_i\) denotes the degree of vertex \(i\). Hence, $\pi <K_i< 2\pi$, which leads to \(K_j-K_i<\pi\) for any \(v_j \sim v_i\). Note that if \(a\le b\) then \(|a|^{p-2}a\le|b|^{p-2}b\), which yields

    \begin{equation*}
        \begin{aligned}
        \frac{1}{\sinh r_i}\frac{dr_i}{dt} &= \sum_{v_j \sim v_i}B_{ij}\bigl|K_j-K_i\bigr|^{p-2}(K_j-K_i)-A_iK_i \\
        &\le \sum_{v_j \sim v_i}B_{ij}\pi^{p-1}-A_i\pi \\
        &=\pi^{p-1}\left(\sum_{v_j \sim v_i}B_{ij}-\frac{1}{\pi^{p-2}}A_i\right) \\
        &= \pi^{p-1}\left(\sum_{v_j \sim v_i}B_{ij}(1-\frac{\cosh l_{ij}-1}{\pi^{p-2}})\right),
        \end{aligned}
    \end{equation*}
    where the last equality is due to Corollary \ref{Corollary:Glickenstein-corollary}.
    Noting that $$ \cosh l_{ij} = \cosh r_i \cosh r_j + \cos \Phi_{ij} \sinh r_i \sinh r_j $$ and the condition \(0 \leq \Phi_{ij} < \pi\), it follows that
    \begin{equation*}
        \exists R > 0, \text{ such that } r_i > R, \text{ then } 1-\frac{\cosh l_{ij}-1}{\pi^{p-2}} < 0,
    \end{equation*}
    Since \(B_{ij} \geq 0\), when $r_i > \max\{R,l\}$ we have
    \begin{equation*}
        \frac{1}{\sinh r_i}\frac{dr_i}{dt} \leq \pi^{p-1}\left(\sum_{v_j \sim v_i}B_{ij}(1-\frac{\cosh l_{ij}-1}{\pi^{p-2}})\right) \leq 0,
    \end{equation*}
    hence we obtain \(r_i(t) \leq \max\{R,l,r_{i}(0)\}\). \par
    In summary, we prove both the lower and upper boundeness of solutions to the combinatorial $p$-th Calabi flow (\ref{Equation:combinatorial_p_th_calabi_flow}) in any finite time, which implies that the long time existence of
solutions to the flow (\ref{Equation:combinatorial_p_th_calabi_flow}). This completes the proof. \hfill $\square$

\section{Appendix}
\subsection{A proof of Lemma \ref{Lemma:Glickenstein-lemma}}

\begin{proof}
    We now prove the second equality. To establish the first equality in the equality (\ref{Equation:Glickenstein-lemma}), it suffices to show that
    \begin{equation*}
        \frac{\partial \theta_{i}^{jk}}{\partial u_{j}} = \frac{\partial \theta_{j}^{ik}}{\partial u_{i}}, \quad \frac{\partial \theta_{i}^{jk}}{\partial u_{k}} = \frac{\partial \theta_{k}^{ij}}{\partial u_{i}},
    \end{equation*}
    which follows directly from Lemma A1 in Chow-Luo \cite{BennettChow2003}. However, we also derive these relations in the subsequent proof and discuss them in Remark \ref{Remark:Glickenstein-lemma}.
    For convenience, we adopt the following notations: $\theta_{i} = \theta_{i}^{jk},\ C_{ij} = \cosh l_{ij},\ S_{ij} = \cosh l_{ij},\ C_{i} = \cosh r_{i},\ S_{i} = \sinh r_{i},\ A_{ijk} = \sinh l_{ij}\sinh l_{ik}\sin \theta_{i}^{jk}.$ According to the hyperbolic cosine law, we have
    \begin{equation*}
    S_{ij} S_{ik}\cos \theta_{i} = C_{ij} C_{ik} - C_{jk}.
    \end{equation*}
    Hence, we compute the partial derivatives of both sides of the above equality with respect to $l_{ij}$, $l_{ik}$, and $l_{jk}$ as follows
    \begin{equation*}
        \begin{gathered}
            -\frac{\partial \theta_i}{\partial l_{ij}} S_{ij}S_{ik}\sin \theta_i + C_{ij}S_{ik} \cos \theta_i = S_{ij}C_{ik},
            \\
            -\frac{\partial \theta_i}{\partial l_{ik}} S_{ij}S_{ik}\sin \theta_i + S_{ij}C_{ik} \cos \theta_i = C_{ij}S_{ik},
            \\
            -\frac{\partial \theta_i}{\partial l_{jk}}S_{ij}S_{ik} \sin \theta_i = -S_{jk}.
        \end{gathered}
    \end{equation*}
    A direct computation gives
    \begin{equation}
        \label{Equation:Glickenstein-lemma-proof_1}
        \begin{gathered}
            \frac{\partial \theta_i}{\partial l_{ij}} = - \frac{S_{ij}C_{ik}-C_{ij}S_{ik}\cos \theta_i}{A_{ijk}} = -\frac{S_{ij}C_{ik} - \frac{C_{ij}^2C_{ik}-C_{ij}C_{jk}}{S_{ij}}}{A_{ijk}}  = -\frac{S_{jk}\cos \theta_j}{A_{ijk}},\\
            \frac{\partial \theta_i}{\partial l_{ik}} = - \frac{C_{ij}S_{ik}-S_{ij}C_{ik}\cos \theta_i}{A_{ijk}} = - \frac{C_{ij}S_{ik} - \frac{C_{ij}C_{ik}^2 - C_{ik}C_{jk}}{S_{ik}}}{A_{ijk}} = -\frac{S_{jk}\cos \theta_k}{A_{ijk}}, \\
            \frac{\partial \theta_i}{\partial l_{jk}} = \frac{S_{jk}}{A_{ijk}}.
        \end{gathered}
    \end{equation}

    Next, recall the formula $C_{ij} = C_i C_j + \cos \Phi_{ij} S_i S_j$, and we obtain
    \begin{equation}
        \label{Equation:Glickenstein-lemma-proof_2}
        \begin{aligned}
            \frac{\partial l_{ij}}{\partial u_i} = S_{ij} \frac{\partial l_{ij}}{\partial r_i}
            &= \frac{\partial C_i}{\partial r_i} C_j + \cos \Phi_{ij} \frac{\partial S_i}{\partial r_i} S_j \\
            &= S_i C_j + \cos \Phi_{ij} C_i S_j, \\
            \frac{\partial l_{ik}}{\partial u_i} = S_{ik} \frac{\partial l_{ik}}{\partial r_i}
            &= \frac{\partial C_i}{\partial r_i} C_k + \cos \Phi_{ik} \frac{\partial S_i}{\partial r_i} S_k \\
            &= S_i C_k + \cos \Phi_{ik} C_i S_k, \\
            \frac{\partial l_{jk}}{\partial u_i} = S_{jk} \frac{\partial l_{jk}}{\partial r_i}
            &= 0.
        \end{aligned}
    \end{equation}

    Based on the chain rule and the results we have already computed in (\ref{Equation:Glickenstein-lemma-proof_1}) and (\ref{Equation:Glickenstein-lemma-proof_2}), we present the following expression for $A_{ijk} \frac{\partial \theta_i}{\partial u_i}$ as follows:
    \begin{equation}
        \label{Equation:Glickenstein-lemma-proof_3}
        \begin{aligned}
            A_{ijk}\frac{\partial \theta_i}{\partial u_i} &=
            A_{ijk} S_i \frac{\partial \theta_i}{\partial r_i}
            = A_{ijk} S_i (\frac{\partial \theta_i}{\partial l_{ij}} \frac{\partial l_{ij}}{\partial r_i} + \frac{\partial \theta_i}{\partial l_{ik}} \frac{\partial l_{ik}}{\partial r_i})
            \\
            &=A_{ijk}S_i(-\frac{S_{jk} \cos \theta_j}{A_{ijk}}\frac{S_iC_j + \cos \Phi_{ij} C_i S_j}{S_{ij}} - \frac{-S_{jk}\cos \theta_k}{A_{ijk}} \frac{S_iC_k + \cos \Phi_{ik} C_i S_k}{S_{ik}})
            \\
            &= -\frac{S_{jk} \cos \theta_j}{S_{ij}}(S_i^2 C_j + \cos \Phi_{ij} C_i S_i S_j) - \frac{S_{jk} \cos \theta_k}{S_{ik}} (S_i^2 C_k + \cos \Phi_{ik} C_i S_i S_k)
            \\
            &=  -\frac{S_{jk} \cos \theta_j}{S_{ij}}(C_i C_{ij} - C_j) - \frac{S_{jk} \cos \theta_k}{S_{ik}} (C_i C_{ik} - C_k)
            \\
            &= - \frac{S_{ik}^2(S_{ij}S_{jk}\cos \theta_j)(C_i C_{ij} - C_j) + S_{ij}^2(S_{ik}S_{jk}\cos \theta_k)(C_iC_{ik}-C_k)}{(S_{ij}S_{ik})^2}
            \\
            &= - \frac{S_{ik}^2(C_{ij}C_{jk} - C_{ik})(C_iC_{ij} - C_j) + S_{ij}^2(C_{ik}C_{jk} - C_{ij})(C_iC_{ik} - C_k)}{(S_{ij}S_{ik})^2}.
        \end{aligned}
    \end{equation}
    Similarly, we compute the expression for $A_{ijk}\,\frac{\partial \theta_i}{\partial u_j}$ as follows:
    \begin{equation}
        \label{Equation:Glickenstein-lemma-proof_4}
        \begin{aligned}
            A_{ijk}\frac{\partial \theta_i}{\partial u_j} &= A_{ijk}S_j \frac{\partial \theta_i}{\partial r_j}
            = A_{ijk}S_j(\frac{\partial \theta_i}{\partial l_{ij}} \frac{\partial l_{ij}}{\partial r_j} + \frac{\partial \theta_i}{\partial l_{jk}} \frac{\partial l_{jk}}{\partial r_j})
            \\
            &= A_{ijk} S_j (-\frac{S_{jk}\cos \theta_j}{A_{ijk}}\frac{C_i S_j + \cos \Phi_{ij} S_i C_j}{S_{ij}} + \frac{S_{jk}}{A_{ijk}}\frac{S_jC_k + \cos \Phi_{jk} C_j S_k}{S_{jk}})
            \\
            &= - \frac{S_{ij}S_{jk}\cos \theta_j(C_iS_j^2 + \cos \Phi_{ij}S_iS_jC_j)}{S_{ij}^2} + S_j^2C_k + \cos \Phi_{jk} S_j S_k C_j
            \\
            &= -\frac{(C_{ij}C_{jk} - C_{ik})(C_j C_{ij} - C_i)}{S_{ij}^2} + C_j C_{jk} - C_k.
        \end{aligned}
    \end{equation}
    Then we compute the expression for $A_{ijk}\,\frac{\partial \theta_i}{\partial u_k}$ as follows:
    \begin{equation}
        \label{Equation:Glickenstein-lemma-proof_5}
        \begin{aligned}
            A_{ijk}\frac{\partial \theta_i}{\partial u_k} &= A_{ijk} S_k \frac{\partial \theta_i}{\partial r_k}
            \\
            &= A_{ijk}S_k(\frac{\partial \theta_i}{\partial l_{ik}} \frac{\partial l_{ik}}{\partial r_k} + \frac{\partial \theta_i}{\partial l_{jk}} \frac{\partial l_{jk}}{\partial r_k})
            \\
            &= A_{ijk}S_k(-\frac{S_{jk}\cos \theta_k}{A_{ijk}}\frac{C_iS_k+\cos \Phi_{ik}S_iC_k}{S_{ik}}+\frac{S_{jk}}{A_{ijk}}\frac{C_jS_k+\cos \Phi_{jk}S_jC_k}{S_{jk}})
            \\
            &= -\frac{S_{ik}S_{jk}\cos \theta_k(C_iS_k^2 + \cos \Phi_{ik}S_iS_kC_k)}{S_{ik}^2}+S_k^2C_j + \cos \Phi_{jk}S_jS_kC_k
            \\
            &= -\frac{(C_{ik}C_{jk}-C_{ij})(C_kC_{ik}-C_i)}{S_{ik}^2} + C_kC_{jk}-C_j.
        \end{aligned}
    \end{equation}
    Next, based on the equalities (\ref{Equation:Glickenstein-lemma-proof_4}) and (\ref{Equation:Glickenstein-lemma-proof_5}), we provide the explicit expression for the right-hand side of the equality (\ref{Equation:Glickenstein-lemma}).
    \begin{equation}
        \label{Equation:Glickenstein-lemma-proof_6}
        \begin{aligned}
            A_{ijk} \frac{\partial \theta_i}{\partial u_j} C_{ij} &= (-\frac{(C_{ij}C_{jk} - C_{ik})(C_j C_{ij} - C_i)}{S_{ij}^2} + C_j C_{jk} - C_k)C_{ij}
            \\
            &= -\frac{(C_{ij}C_{jk} - C_{ik})(-C_i C_{ij} + C_j + C_j S_{ij}^2)}{S_{ij}^2} + C_j C_{jk} C_{ij} - C_k C_{ij}
            \\
            & = \frac{(C_{ij}C_{jk} - C_{ik})(C_i C_{ij} - C_j )}{S_{ij}^2} - C_{ij} C_{jk} C_j + C_{ik} C_j+ C_j C_{jk} C_{ij} - C_k C_{ij}
            \\
            &= \frac{(C_{ij}C_{jk} - C_{ik})(C_i C_{ij} - C_j )}{S_{ij}^2}+ C_{ik} C_j - C_k C_{ij},
        \end{aligned}
    \end{equation}
    \begin{equation}
        \label{Equation:Glickenstein-lemma-proof_7}
        \begin{aligned}
            A_{ijk} \frac{\partial \theta_i}{\partial u_k} C_{ik} &= (-\frac{(C_{ik}C_{jk} - C_{ij})(C_k C_{ik} - C_i)}{S_{ik}^2} + C_k C_{jk} - C_j)(C_{ik})
            \\
            &= -\frac{(C_{ik}C_{jk} - C_{ij})(- C_iC_{ik} + C_k + C_k S_{ik}^2 )}{S_{ik}^2} + C_k C_{jk} C_{ik} - C_j C_{ik}
            \\
            &=  \frac{(C_{ik}C_{jk} - C_{ij})(C_iC_{ik} - C_k )}{S_{ik}^2} - C_{ik} C_{jk} C_k + C_{ij}C_k + C_k C_{jk} C_{ik} - C_{j} C_{ik}
            \\
            &= \frac{(C_{ik}C_{jk} - C_{ij})(C_iC_{ik} - C_k )}{S_{ik}^2}+ C_{ij}C_k- C_{j} C_{ik},
        \end{aligned}
    \end{equation}
    Finally, combining the equalities (\ref{Equation:Glickenstein-lemma-proof_3}), (\ref{Equation:Glickenstein-lemma-proof_6}) and (\ref{Equation:Glickenstein-lemma-proof_7}), we have
    \begin{equation*}
        \begin{aligned}
            &\quad A_{ijk}(\frac{\partial \theta_i}{\partial u_k}C_{ik} + \frac{\partial \theta_i}{\partial u_j}C_{ij} + \frac{\partial \theta_i}{\partial u_i})
            \\
            &=\frac{(C_{ik}C_{jk} - C_{ij})(C_iC_{ik} - C_k )}{S_{ik}^2}+ C_{ij}C_k- C_{j} C_{ik}+\frac{(C_{ij}C_{jk} - C_{ik})(C_i C_{ij} - C_j )}{S_{ij}^2}+ C_{ik} C_j - C_k C_{ij}
            \\
            &\quad - \frac{S_{ik}^2(C_{ij}C_{jk} - C_{ik})(C_iC_{ij} - C_j) + S_{ij}^2(C_{ik}C_{jk} - C_{ij})(C_iC_{ik} - C_k)}{(S_{ij}S_{ik})^2}
            \\
            &= \frac{(1-1)S_{ik}^2(C_{ij}C_{jk} - C_{ik})(C_iC_{ij} - C_j) + (1-1)S_{ij}^2(C_{ik}C_{jk} - C_{ij})(C_iC_{ik} - C_k)}{(S_{ij}S_{ik})^2}
            \\
            &= 0,
        \end{aligned}
    \end{equation*}
    which implies that $\frac{\partial \theta_i}{\partial u_k}C_{ik} + \frac{\partial \theta_i}{\partial u_j}C_{ij} + \frac{\partial \theta_i}{\partial u_i}=0$.
    This completes the proof of Lemma \ref{Lemma:Glickenstein-lemma}. \hfill $\square$
\end{proof}

\end{document}